\documentclass[a4paper,12pt]{amsart}%
\usepackage{amssymb}
\usepackage{amsmath}
\usepackage{amsfonts}
\usepackage{doublespace}
\usepackage{color}
\usepackage{graphicx}%
\setcounter{MaxMatrixCols}{30}
%TCIDATA{OutputFilter=latex2.dll}
%TCIDATA{Version=5.50.0.2960}
%TCIDATA{CSTFile=amsartci.cst}
%TCIDATA{Created=Thursday, May 31, 2012 13:14:05}
%TCIDATA{LastRevised=Sunday, November 24, 2013 20:54:12}
%TCIDATA{<META NAME="GraphicsSave" CONTENT="32">}
%TCIDATA{<META NAME="SaveForMode" CONTENT="1">}
%TCIDATA{BibliographyScheme=BibTeX}
%TCIDATA{<META NAME="DocumentShell" CONTENT="Articles\SW\AMS Journal Article">}
%TCIDATA{Language=American English}
%TCIDATA{ComputeDefs=
%$u$
%$p_{1}$
%$\omega_{0}\left(  z\right)  =\omega_{0}$
%$F\left(  x_{k_{0},n}\right)  =y_{n}=F$
%$U\left(  z\right)  =\int_{\gamma}u$
%$K$
%$L^{2}$
%1$\varphi_{\varepsilon}\left(  z\right)  =%
%%TCIMACRO{\dsum \limits_{\nu}}%
%%BeginExpansion
%{\displaystyle\sum\limits_{\nu}}
%%EndExpansion
%G$
%$\operatorname*{Res}\left(  U,z_{0}\right)  =\operatorname*{Res}\left(
%\partial U,z_{0}\right)  =\frac{1}{2\pi i}\int_{dist}$
%$c$
%$\overline{W}$
%$h_{j,o}\left(  p\right)  =\operatorname*{Res}$
%$f_{j}$
%$\tau_{n+1}$
%$\tau$
%$T$
%$\theta_{j}$
%$\theta$
%}
%BeginMSIPreambleData
\providecommand{\U}[1]{\protect\rule{.1in}{.1in}}
%EndMSIPreambleData
\theoremstyle{plain}
\newtheorem{theorem}{Theorem}

\newtheorem{lemma}[theorem]{Lemma}

\numberwithin{equation}{section}
\newtheorem{theoremUN}{Theorem}

\begin{document}
\title[Approximations by canonical almost embeddings]{Bishop-Runge approximations and inversion of a Riemann-Klein theorem}
\author{Gennadi Henkin}
\address[G. Henkin]{Universit\'{e} Pierre et Marie Curie, Paris, France; CEMI, Academy
of Science, Moscow, Russi\vspace{-0.2cm}a\vspace{-0.2cm}}
\author{Vincent Michel}
\address[V. Michel]{Universit\'{e} Pierre et Marie Curie, Paris, France}
\email{henkin@math.jussieu.fr, michel@math.jussieu.fr}
\date{20 Novembre 2013}
\subjclass{32D15, 32C25, 32V15, 35R30, 58J32 }
\keywords{Conformal structure, Riemann surface, nodal curve, Green function, inverse
Dirichlet to Neumann problem }

\begin{abstract}%
%TCIMACRO{\TeXButton{Début Interligne}{\begin{spacing}{1.1}}}%
%BeginExpansion
\begin{spacing}{1.1}%
%EndExpansion
In this paper we give results about projective embeddings of Riemann surfaces,
smooth or nodal, which we apply to the inverse Dirichlet-to-Neumann problem
and to the inversion of a Riemann-Klein theorem. To produce useful embeddings,
we adapt a technique of Bishop in the open bordered case and use Runge type
harmonic approximation theorem in the compact case.

%

%TCIMACRO{\TeXButton{Fin Interligne}{\end{spacing}}}%
%BeginExpansion
\end{spacing}%
%EndExpansion

\end{abstract}
\maketitle

%

%TCIMACRO{\TeXButton{Début Interligne}{\begin{spacing}{1.1}}}%
%BeginExpansion
\begin{spacing}{1.1}%
%EndExpansion

\section{Introduction}

Applied to an open bordered Riemann surface, the works of
Bishop~\cite{BiE1961} and Narasimhan~\cite{NaR1960} about the embedability of
Stein manifolds imply the following\medskip

\begin{theoremUN}
[Bishop-Narasimhan]Let $Z$\textit{\ }be an open bordered Riemann surface,
$n\geqslant2$\ and\textit{\ }$f:Z\longrightarrow\mathbb{C}^{n}$\textit{\ }a
holomorphic map smooth up to the boundary. Then, $f$ can be uniformly
approximated on $\overline{Z}$\ by embeddings if $n\geqslant3$ and if $n=2$,
by immersions injective outside of a finite set.
\end{theoremUN}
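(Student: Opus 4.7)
The plan is to reduce the statement to a general-position argument applied to a finite-parameter family of perturbations of $f$. Since $Z$ is an open bordered Riemann surface, $\overline{Z}$ is compactly contained in a slightly larger open Riemann surface $\widetilde{Z}$, which is Stein. Applying the classical Remmert--Bishop--Narasimhan embedding theorem to $\widetilde{Z}$ yields a proper holomorphic embedding $h=(h_{1},\dots,h_{N}):\widetilde{Z}\hookrightarrow\mathbb{C}^{N}$ for some $N$; its restriction to $\overline{Z}$ is holomorphic and smooth up to the boundary. For $B\in\mathrm{Mat}_{n,N}(\mathbb{C})$ of small norm set $f_{B}:=f+B\circ h$; then $f_{B}$ is uniformly close to $f$ on $\overline{Z}$, so it suffices to show that for $B$ in a dense subset of a neighbourhood of $0$, the map $f_{B}$ is an embedding (if $n\geq3$), respectively an immersion injective outside a finite set (if $n=2$).

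The heart of the argument is a transversality statement. Introduce the holomorphic evaluation maps
\[
\Psi:\bigl\{(p,q)\in\overline{Z}\times\overline{Z}:p\neq q\bigr\}\times\mathrm{Mat}_{n,N}(\mathbb{C})\longrightarrow\mathbb{C}^{n},\quad(p,q,B)\longmapsto f_{B}(p)-f_{B}(q),
\]
and the analogue $\Xi$ recording the vanishing of $df_{B}$ at a point. Because $h$ separates points of $\widetilde{Z}$ and has everywhere injective differential, for each fixed pair $p\neq q$ (resp.\ each $p$) the partial derivative of $\Psi$ (resp.\ $\Xi$) in $B$ is surjective onto $\mathbb{C}^{n}$ (resp.\ onto $\mathrm{Hom}(T_{p}Z,\mathbb{C}^{n})$). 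Hence $\Psi$ and $\Xi$ are submersions near the relevant target values, so $\Psi^{-1}(0)$ and the non-immersive locus of $\Xi$ are complex analytic subsets of codimension~$n$. Projecting to the parameter space, their generic fibres have complex dimension $2-n$ and $1-n$ respectively. For $n\geq3$ both fibres are empty outside a proper analytic subset of matrices $B$, so $f_{B}$ is simultaneously injective and immersive. For $n=2$ the non-injectivity fibre is generically $0$-dimensional, and compactness of $\overline{Z}\times\overline{Z}$ promotes discreteness to finiteness, while the non-immersive fibre is already empty.

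The principal technical difficulty is carrying this transversality argument cleanly up to the boundary within the holomorphic category. One resolves it by performing all analysis on the slightly larger Stein manifold $\widetilde{Z}$, where the standard holomorphic Sard-type results apply, and then restricting at the end; compactness of $\overline{Z}$ converts the discreteness of the exceptional set into finiteness in the case $n=2$. A final openness argument in the space of holomorphic maps smooth up to the boundary upgrades existence of perturbations to density, delivering the desired approximation of $f$.
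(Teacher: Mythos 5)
This statement is quoted in the paper as a known theorem of Bishop and Narasimhan and is not proved there; the closest the paper comes is its proof of the projective analogue (Theorem~A via Theorems~\ref{T AEB1}--\ref{T AEB3}), which explicitly follows Bishop's scheme: produce, by a compactness covering, finitely many global holomorphic functions on a Stein neighbourhood that separate points and give local coordinates, obtain an injective immersion into a high-dimensional space, and then reduce the dimension one coordinate at a time by generic linear projections controlled by the Morse--Whitney--Sard lemma. Your proposal is the same circle of ideas in a streamlined packaging: instead of iterated one-dimensional reductions you perturb once by $B\circ h$ with $B$ a generic small $n\times N$ matrix, and the surjectivity of $\dot B\mapsto \dot B\bigl(h(p)-h(q)\bigr)$ and $\dot B\mapsto \dot B\,dh_p$ plays exactly the role of the paper's separating/coordinate functions. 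The dimension count (fibres of dimension $2-n$ for double points, $1-n$ for critical points) is correct, and the one-shot version is legitimate here precisely because $\overline Z$ is compact, so no properness at infinity needs to be preserved --- which is the reason Bishop's original argument reduces dimension step by step.

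Two points deserve more care than your sketch gives them. First, $f$ is only defined on $\overline Z$, not on $\widetilde Z$, so you cannot literally ``perform all analysis on $\widetilde Z$'' in the holomorphic category: either first approximate $f$ on $\overline Z$ by a map holomorphic on a neighbourhood (a Runge-type step, as in the paper's Lemma~\ref{L/ RungefaibleLisse}), or run the transversality on the manifold with boundary $\overline Z$ and invoke the smooth measure-zero (mini-Sard) lemma rather than a ``holomorphic Sard-type result''; the conclusion is unaffected since you only need the exceptional set of matrices $B$ to have measure zero. Second, for $n=2$ the double-point locus lives in $\{(p,q):p\neq q\}$, which is not compact, so ``discreteness plus compactness of $\overline Z\times\overline Z$'' does not immediately give finiteness: you must first secure that $f_B$ is an immersion (your $\Xi$ argument), deduce local injectivity, and conclude that the non-injectivity pairs stay a positive distance from the diagonal, whence they form a compact discrete, hence finite, set. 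With these routine repairs the argument is complete.
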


That this holds when $\mathbb{C}^{n}$ is replaced by the complex projective
space $\mathbb{CP}_{n}$ of the same dimension is a question arising in
applications but seems to haven't yet been addressed. A natural way to
construct projective maps of a Riemann surface $Z$ is to use global sections
of its canonical bundle $K\left(  Z\right)  $. Indeed, if $\omega=\left(
\omega_{0},...,\omega_{n}\right)  \in K\left(  Z\right)  ^{n+1}$ never
vanishes, $\omega$ induces a \textit{canonical map }denoted $\left[
\omega\right]  $ or $\left[  \omega_{0}:\cdots:\omega_{n}\right]  $ from $Z$
to $\mathbb{CP}_{n}$ defined by the formulas
\begin{equation}
\left[  \omega\right]  =\left[  \frac{\omega_{0}}{\omega_{j}}:\cdots
:\frac{\omega_{j-1}}{\omega_{j}}:1:\frac{\omega_{j+1}}{\omega_{j}}%
:\cdots:\frac{\omega_{n}}{\omega_{j}}\right]  \label{F app}%
\end{equation}
on $\left\{  \omega_{j}\neq0\right\}  $, $0\leqslant j\leqslant n$. As in the
affine case, there is an approximation statement when the target space is
projective.\smallskip

\noindent\textbf{Theorem A. }\label{T/ A}\textit{Let }$Z$\textit{\ be an open
bordered Riemann surface, }$n\geqslant2$\textit{, and }$f:\overline
{Z}\longrightarrow\mathbb{CP}_{n}$\textit{\ a holomorphic map smooth up to the
boundary. Then }$f$\textit{\ can be uniformly approximated on }$\overline{Z}%
$\textit{\ by canonical embeddings if }$n\geqslant3$\textit{\ and if }$n=2$,
\textit{by canonical immersions injective outside of a finite set.}\smallskip

This result is a consequence of theorem~\ref{T AEB2}, which is a variation of
the density theorem~\ref{T AEB1} needed in~\cite{HeG-MiV2007, HeG-MiV2013} for
our constructive solution of the inverse Dirichlet-to-Neumann problem for
Riemann surfaces.

This problem can be stated in the constant conductivity case as follows.
Consider in $\mathbb{R}^{3}$ an open bordered real surface $Z$ endowed with
the complex structure induced by the standard euclidean structure of
$\mathbb{R}^{3}$~; every Riemann surface admits such a presentation according
to Garsia~\cite{GaA1961} for the compact case and to R\"{u}edy~\cite{RuR1971}
for the bordered case. Assume that $Z$ has constant conductivity. Then the
Dirichlet-to-Neumann operator of $Z$ can be seen as the operator $N$ which to
$u\in C^{1}\left(  bZ\right)  $ associates the boundary current induced by the
electrical potential $\widetilde{u}$ created by $u$ in $Z$, that is
$Nu=\left(  d^{c}\widetilde{u}\right)  \left\vert _{bZ}\right.  $ where
$\widetilde{u}$ is the harmonic extension to $Z$ of $u$ and $d^{c}=i\left(
\overline{\partial}-\partial\right)  $, $\overline{\partial}$ being the
Cauchy-Riemann operator of $Z$. The \textit{inverse Dirichlet-to-Neumann
problem} in this case is to reconstruct $Z$ from a finite number of
measurements of boundary currents, that is from $\left(  Nu_{\ell}\right)
_{0\leqslant\ell\leqslant n}$ where the boundary potentials $u_{\ell}$,
$0\leqslant\ell\leqslant n$, are known. Canonical maps appear here naturally
since data of $\left(  u_{\ell},Nu_{\ell}\right)  _{0\leqslant\ell\leqslant
n}$ and $\left(  u_{\ell},\left(  \partial\widetilde{u_{\ell}}\right)
\left\vert _{bZ}\right.  \right)  _{0\leqslant\ell\leqslant n}$ are
equivalent. A recent interesting survey on this topic is written
in~\cite{GuCTzL2012}.

In~\cite{HeG-MiV2007}, we study this problem under its three aspects~:
uniqueness, reconstruction and characterization. We prove in particular that
if the boundary of $Z$ is known, the knowledge of only three boundary
potentials $u_{\ell}$, $0\leqslant\ell\leqslant2$, and their associated
boundary currents $\left(  \partial\widetilde{u_{\ell}}\right)  \left\vert
_{bZ}\right.  $, $0\leqslant\ell\leqslant2$, is sufficient to recover $Z$ with
Cauchy type integral formulas from the canonical boundary map $\left[  \left(
\partial\widetilde{u}\right)  \left\vert _{bZ}\right.  \right]  $ when
$\left(  u_{\ell}\right)  _{0\leqslant\ell\leqslant2}$ fulfills a generic
hypothesis which was not quite correctly formulated in~\cite{HeG-MiV2007} and
rightly in \cite{HeG-MiV2013} where some of our results are extended to open
bordered nodal curves with electrically charged nodes.

In the present paper, we prove that data satisfying the assumptions
of~\cite{HeG-MiV2013} are generic~; these assumptions essentially mean that
the canonical map $\left[  \partial\widetilde{u}\right]  $ is an immersion
which embeds $bX$ into $\mathbb{CP}_{2}$ and is proper in the sense that the
pullback of the image of $bX$ reduces to $bX$~; such data are sufficient for
the results of~\cite{HeG-MiV2007, HeG-MiV2013} to hold.\smallskip

Our results in the inverse Dirichlet-to-Neumann problem are also applied
in~\cite{HeG-MiV2013} to obtain an inversion of the following theorem of
Riemann and Klein.\vspace{0.2cm}

\begin{theoremUN}
[Riemann 1857, Klein 1882]Let $Z$\ be a compact Riemann surface, $\left(
a_{j}^{\pm}\right)  _{1\leqslant j\leqslant\nu}$ a\ family of mutually
distinct points of $Z$\ and $\left(  c_{j}\right)  _{1\leqslant j\leqslant\nu
}\in\mathbb{R}^{\nu}$. Then, there exists a unique (up to an additive
constant) harmonic function $U$ on $Z\backslash\left\{  a_{j}^{\pm
};~1\leqslant j\leqslant\nu\right\}  $ with at most logarithmic singularities
such that the residue $\operatorname*{Res}_{a_{j}^{\pm}}\left(  d^{c}U\right)
\overset{def}{=}\frac{1}{2\pi}\int_{dist(.,a_{j}^{\pm})=\varepsilon}d^{c}U$
($\varepsilon>0$\ small enough) of $d^{c}U=i\left(  \overline{\partial
}-\partial\right)  U$\ at $a_{j}^{\pm}$\ is $\pm c_{j}$, $1\leqslant
j\leqslant\nu$.
\end{theoremUN}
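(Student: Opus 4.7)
The plan is to establish uniqueness by a removable--singularity argument and existence by exhibiting $\partial U$ as a meromorphic $(1,0)$-form with prescribed simple poles and purely imaginary periods, built from the classical theory of abelian differentials of the third kind.

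For uniqueness, I would observe that if $U_{1}$ and $U_{2}$ are two such harmonic functions, their difference $V=U_{1}-U_{2}$ is harmonic on $Z\setminus\{a_{j}^{\pm}\}$ with zero residue at every $a_{j}^{\pm}$, hence bounded near each singular point. The two-dimensional removable--singularity theorem for harmonic functions then extends $V$ across the $a_{j}^{\pm}$ to a harmonic function on the compact $Z$, which must be constant by the maximum principle.

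For existence, I would transfer the search to the $(1,0)$-form $\omega:=\partial U$. Since $U$ is real, $dU=\omega+\bar{\omega}$, so $U$ is recovered from $\omega$ by $U(z)=2\operatorname{Re}\int_{z_{0}}^{z}\omega$ for any fixed base point $z_{0}\in Z\setminus\{a_{j}^{\pm}\}$. The requested properties of $U$ translate into three conditions on $\omega$: it is holomorphic on $Z\setminus\{a_{j}^{\pm}\}$; it extends meromorphically to $Z$ with simple poles of residues $\pm c_{j}/2$ at $a_{j}^{\pm}$; and all its periods on $H_{1}(Z\setminus\{a_{j}^{\pm}\},\mathbb{Z})$ are purely imaginary, this last condition being exactly the single-valuedness of $U$. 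The classical theory of abelian differentials of the third kind on a compact Riemann surface, itself a direct consequence of Riemann--Roch, furnishes a meromorphic differential $\omega_{0}$ on $Z$ with precisely the prescribed pole structure, the sole compatibility condition being the vanishing of the total residue $\sum_{j}(c_{j}/2-c_{j}/2)=0$, which is automatic.

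The hard part will be adjusting the periods of $\omega_{0}$ on a symplectic basis $(\alpha_{i},\beta_{i})_{1\leqslant i\leqslant g}$ of $H_{1}(Z,\mathbb{Z})$. By the Riemann bilinear relations, the $\mathbb{R}$-linear map from the space $H^{0}(Z,K)$ of holomorphic $(1,0)$-forms to $\mathbb{R}^{2g}$ sending $\eta$ to the real parts of its periods on such a basis is injective---a nonzero $\eta$ with purely imaginary periods would contradict the positivity $i\int_{Z}\eta\wedge\bar{\eta}>0$---hence an isomorphism of $2g$-dimensional real vector spaces. Picking $\omega_{1}\in H^{0}(Z,K)$ so that $\omega:=\omega_{0}+\omega_{1}$ has purely imaginary periods on the symplectic basis, and noting that the small-loop periods $\pm\pi ic_{j}$ around the $a_{j}^{\pm}$ are automatically purely imaginary since $c_{j}\in\mathbb{R}$, one obtains the desired $\omega$; the formula $U(z):=2\operatorname{Re}\int_{z_{0}}^{z}\omega$ then produces a single-valued real harmonic function on $Z\setminus\{a_{j}^{\pm}\}$ whose local expansion near $a_{j}^{\pm}$ is $\pm c_{j}\log|z_{j}^{\pm}|+\text{harmonic}$, as required.
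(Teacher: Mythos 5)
Your proof is correct. Note first that the paper itself offers no proof of this statement: it is quoted as a classical theorem of Riemann and Klein, so there is no in-paper argument to match yours against. Your uniqueness argument is fine (with the paper's normalization of logarithmic singularities, the difference of two solutions is $R_{1}-R_{2}$ with $R_{1},R_{2}$ smooth, hence removable), and your existence argument is the standard function-theoretic one: a differential of the third kind with residues $\pm c_{j}/2$, corrected by a holomorphic differential chosen via the injectivity of $\eta\mapsto(\operatorname{Re}\int_{\alpha_{i}}\eta,\operatorname{Re}\int_{\beta_{i}}\eta)$, which you justify correctly by the bilinear relations; the residue bookkeeping ($\operatorname{Res}(\partial U)=c/2$ versus the paper's $\frac{1}{2\pi}\int d^{c}U=c$) checks out. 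It is worth pointing out that the machinery the paper does develop in its Section 4 yields a shorter existence proof that avoids periods altogether: with the Green function $G$ of the Laplace--Beltrami operator satisfying $\Delta G_{z}=\delta_{z}-1$ and the local expansion $G_{z}\sim\frac{1}{\pi}\ln|w|$ of Lemma~\ref{L/ SingG}, the function $U=\pi\sum_{j}c_{j}\left(G_{a_{j}^{+}}-G_{a_{j}^{-}}\right)$ is harmonic off the poles (the constants $-1$ cancel because the charges are balanced pairwise) and has exactly the prescribed residues; this is in fact how the paper builds its bipolar potentials $U_{Z,\ell}^{a,c}$ and the approximants in Theorem~\ref{T/ Runge}. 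Your route is more classical and self-contained on the Riemann-surface side; the Green-function route trades the Riemann bilinear relations for elliptic PDE theory and is the one compatible with the quantitative estimates the paper needs later.
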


In this statement $Z$ should be seen as a real compact surface of
$\mathbb{R}^{3}$ endowed with the complex structure induced by the standard
euclidean structure of $\mathbb{R}^{3}$ with constant conductivity and the
couples $\left(  a_{j}^{-},a_{j}^{+}\right)  $ as dipoles electrically charged
with $\left(  -c_{j},+c_{j}\right)  $. In this setting, an\textit{\ electrical
potential} is a harmonic function having only a finite number of logarithmic
isolated singularities whose \textit{charges}, that is residues in the above
sense, have a vanishing sum~; the sum of their modulus is called the $L^{1}$
norm of this\textit{\ distribution of charges}.

Our paper \cite{HeG-MiV2013} contains the result below about the inversion of
a theorem of Riemann and Klein which can be formulated as the reconstruction
of a compact Riemann surface from data collected on a (small) known
subdomain.\smallskip

\noindent\textbf{Theorem B. }\label{T/ B}\textit{Let }$S$\textit{\ be an open
subdomain of a compact Riemann surface }$Z$ \textit{considered as a
submanifold of }$\mathbb{R}^{3}$ \textit{equipped with the conformal structure
induced by the standard euclidean structure on} $\mathbb{R}^{3}$\textit{. Let
}$\left(  U_{\ell}\right)  _{0\leqslant\ell\leqslant2}$\textit{\ be a family
of potentials induced on }$Z$ \textit{by electrically charged dipoles in }%
$S$\textit{. Then }$Z\backslash S$\textit{\ can be reconstructed from
}$\left(  U_{\ell}\left\vert _{S}\right.  \right)  _{0\leqslant\ell\leqslant
2}$ \textit{or} $\left(  V_{\ell}\left\vert _{S}\right.  \right)
_{0\leqslant\ell\leqslant2}$ \textit{where} $\left(  V_{\ell}\right)
_{0\leqslant\ell\leqslant2}$ \textit{is a family} \textit{of electrical
potentials\ such that }$\left(  U_{\ell}-V_{\ell}\right)  _{0\leqslant
\ell\leqslant2}$ \textit{is arbitrarily closed to }$0 $\textit{\ in
}$C^{\infty}\left(  Z\backslash S\right)  ^{3}$ \textit{and} \textit{is
induced by a distribution of electrical charges confined in }$S$\textit{\ with
arbitrarily small }$L^{1}$\textit{\ norm.}\smallskip

This result is a consequence of theorem~\ref{T/ compact generique} below. It
is interesting to compare theorem~B with the results of H.
Stahl~\cite{StH1997} and of A. Gonchar, E. Rakhmanov, S.
Suetin~\cite{GoA-RaE-SuS2011} on Pade approximations of algebraic functions.
In spite of that our technique is very different from method
of~\cite{GoA-RaE-SuS2011} and [\cite{StH1997}], our purpose is quite similar~:
constructive reconstruction of an algebraic Riemann surface $Z$ from optimal
electrostatic data collected in a neighborhood $S$ of one point in $Z$.

To prove the perturbation statement of theorem~B, we need a quantitative
version of known~\cite{BaT1988, BoAGaP1984} qualitative approximation theorems
for harmonic functions. This is done with theorem~\ref{T/ Runge}~; its proof
uses a discretization of the Hodge-de~Rham decomposition
formula~(\ref{F/ Hodge}).\smallskip

The next section is devoted to the statement of our mains results which
contain theorems~A and~B. Proofs are in the last two sections.

\section{Statements of theorems}

An \textit{open bordered Riemann surface} is the interior of a one dimensional
compact complex manifold with boundary whose all connected components have non
trivial one real dimensional smooth boundary.

An \textit{open (bordered) nodal curve} is a quotient of an open
(bordered)\textit{\ }Riemann surface $Z$ by an equivalence relation
identifying a finite number of interior points. If $X$ is a nodal curve, a
\textit{branch} of $X$ is any connected Riemann surface contained in $X$. A
\textit{node }of $X$ is a point of the singular locus $\operatorname*{Sing}X$
of $X$~; the number of germs of branches of $X$ passing through $a$ is denoted
by $\nu\left(  a\right)  $.

If $G$ is an open subset of $X$ and $r\in\left[  0,+\infty\right]  $, a
function $u$ on $G$ is said to be of class $C^{r}$ if it is continuous and if
for any branch $B$ of $G$, $u\left\vert _{B}\right.  \in C^{r}\left(
B\right)  $~; the space of such functions is denoted by $C^{r}\left(
G\right)  $ or $C_{0,0}^{r}\left(  G\right)  $. If $p,q\in\left\{
0,1\right\}  $ and $p+q>0$, a $\left(  p,q\right)  $-form $\omega$ of
$C_{p,q}^{r}\left(  G\cap\operatorname*{Reg}X\right)  $ is said to be of class
$C^{r}$ on $G$ if for any branch $B$ of $G$, $\omega\left\vert _{B\cap
\operatorname*{Reg}X}\right.  $ extends as an element of $C_{p,q}^{r}\left(
B\right)  $. The space of such forms is denoted by $C_{p,q}^{r}\left(
G\right)  $.

If $K$ is a compact subset of $X$ and $p,q\in\left\{  0,1\right\}  $, the
space $C_{p,q}^{\infty}\left(  K\right)  $ of smooth $\left(  p,q\right)
$-forms supported in $K$ is equipped with the topology induced by the
semi-norms $\left\Vert \omega\right\Vert _{m,K,B}=~\underset{_{B\cap K}}{\sup
}\left\Vert D^{\left(  m\right)  }\omega\left\vert _{B}\right.  \right\Vert $
where $m$ is any integer, $B$ any branch of $X$ and $D$ is the total
differential acting on coefficients~; if $K\subset\operatorname*{Reg}X$, the
index $B$ is omitted.

The space $D_{p,q}\left(  G\right)  $ of smooth $\left(  p,q\right)  $-forms
compactly supported in $G$ is equipped with the inductive limit topology of
the spaces $C_{p,q}^{\infty}\left(  K\right)  $ where $K$ is any compact of $G
$. The space $D_{p,q}^{\prime}\left(  G\right)  $ of \textit{currents} on $G $
of bidegree $\left(  p,q\right)  $ is the topological dual of $D_{p,q}\left(
G\right)  ~$; the elements of $D_{1,1}^{\prime}\left(  G\right)  $ are the
\textit{distributions} on $G$. The exterior differentiation $d$ of smooths
forms is well defined along branches of $X$, so it is for $\partial$ and
$\overline{\partial}$. These operators extend to currents by duality.

A \textit{harmonic} \textit{distribution }is an element $U$ of $D_{1,1}%
^{\prime}\left(  G\right)  $ which is \textit{(weakly) harmonic} in the sense
that the current $i\partial\overline{\partial}U$ vanish, that is $\left\langle
i\partial\overline{\partial}U,\varphi\right\rangle =0$ for all $\varphi\in
C_{c}^{\infty}\left(  G\right)  $. Thus, $U$ is harmonic if and only if
$\partial U$\ is a $\overline{\partial}$-closed current. Note that $\omega\in
D_{0,1}^{\prime}\left(  G\right)  $ is $\overline{\partial}$-closed if and
only if $\omega$ is a weakly holomorphic $\left(  1,0\right)  $-form in the
sense of Rosenlicht~\cite{RoM1954}.

According to~\cite[prop.~2]{HeG-MiV2013}, a harmonic distribution $U$ on $G$
is a usual harmonic function on $\operatorname*{Reg}G$ such that for any node
$a$ of $X$ lying in $G$ and any branch $B$ of $X$ at $a$, $U\left\vert _{B\cap
G}\right.  $ has at most a \textit{logarithmic isolated singularity}, that is
$U\left\vert _{B\cap G}\right.  =\operatorname*{Res}_{B}\left(  U,a\right)
\ln\operatorname{dist}\left(  .,a\right)  +R$ where $\operatorname*{Res}%
_{B}\left(  U,a\right)  \in\mathbb{C}$, the distance is computed in any
hermitian metric of $B$ and the remainder $R$ is smooth near $a$ in $B$~; in
addition, the sum of the complex numbers%
\[
\operatorname*{Res}{}_{B}\left(  U,a\right)  =\operatorname*{Res}\left(
\left(  \partial U\right)  \left\vert _{B}\right.  ,a\right)  =\frac{1}{2\pi
i}\int_{B\cap\left\{  dist\left(  .,a\right)  =\varepsilon\right\}  }\partial
U
\]
($\varepsilon>0$ small enough) taken over the branches $B$ of $X$ at $a$ is
$0$.

For any given $u\in C^{\infty}\left(  bX\right)  $ and \textit{admissible
family }$c$, that is a family $\left(  c_{a,j}\right)  _{a\in
\operatorname*{Sing}X,~1\leqslant j\leqslant\nu\left(  a\right)  }$ of complex
numbers such that $\underset{1\leqslant j\leqslant\nu\left(  a\right)
}{\Sigma}c_{a,j}=0$ for each $a\in\operatorname*{Sing}X$, there is a unique
harmonic distribution extension $\widetilde{u}^{c}$ of $u$ to $X$ with $c$ as
family of residues (see e.g. \cite[prop.~2]{HeG-MiV2013})~; when $X$ is
smooth, the only admissible family is the empty one and $\widetilde{u}%
^{\varnothing}=\widetilde{u}$ is the usual harmonic extension of $u$ to $X$.

Weakly holomorphic $\left(  1,0\right)  $-forms on $G$ are usual holomorphic
forms on $\operatorname*{Reg}G$, meromorphic on branches of $G$ and at each
pole, the sum of residues along branches passing through it is equal to $0$.
In particular, they are entitled to produce canonical maps. It is worth
noticing that the singularities of weakly holomorphic $\left(  1,0\right)
$-forms on a nodal curve are at most simple poles and that at each pole the
sum of the residues is $0$. These facts are described in algebraic language
in~\cite{HaJ-MoIL1998} for general curves~; in the case of nodal curves,
\cite[prop.~2]{HeG-MiV2013} gives an elementary justification.

An \textit{immersion} of an open bordered nodal curve $X$ in some complex
manifold $M$ is a map multivaluate from $\overline{X}$ to $M$ such that its
restriction to any branch of $X$ is a usual univaluate immersion.

An \textit{almost embedding} is an immersion $\varphi:\overline{X}%
\longrightarrow M$ with the following properties~: $X^{\prime}=\varphi\left(
X\right)  $ is an analytic subset of $M\backslash bX^{\prime}$~; there exists
a finite subset of $E$ of $X$ such that $\varphi\left(  E\right)
\subset\varphi\left(  X\right)  \backslash\varphi\left(  bX\right)  $ and
$\varphi$ is an isomorphism of Riemann surfaces from $\left(
\operatorname*{Reg}X\right)  \backslash E$ onto $\left(  \operatorname*{Reg}%
X^{\prime}\right)  \backslash E$ which extends as a diffeomorphism of
manifolds with boundary between some open neighborhoods of $bX$ and
$bX^{\prime}$ in $\overline{X}$ and $\overline{X^{\prime}}$. In particular,
$\varphi\left\vert _{bX}\right.  $ embeds $bX$ into $M$ and $\varphi
^{-1}\left(  \varphi\left(  bX\right)  \right)  =bX$. Note that $\varphi$ may
not preserves node.

An \textit{embedding} is an almost embedding $\varphi:\overline{X}%
\longrightarrow M$ which is an homeomorphism from $\overline{X}$ to
$\varphi\left(  \overline{X}\right)  $, or, equivalently, which is univaluate
and such that for each node $a$ of $X$, the (germs of) branches of $X^{\prime
}$ at $\varphi\left(  a\right)  $ are the images by $\varphi$ of the (germs
of) branches of $X$ at $a$. When $X$ is smooth, these definitions match the
usual ones.

Our first theorem is about boundary data coming from generic almost embeddings
of the interior.

\begin{theorem}
[Generic boundary data]\label{T AEB1}Let $X$ be an open bordered nodal curve,
$c=\left(  c_{\ell}\right)  _{0\leqslant\ell\leqslant2}$ a 3-uple of
admissible families and $G_{c}^{ae}\left(  bX\right)  $ be the set of $\left(
u_{\ell}\right)  _{0\leqslant\ell\leqslant2}\in C^{\infty}\left(  bX\right)
^{3}$ such that $0\notin\left(  \partial U_{0}\right)  \left(  bX\right)  $
and $\left(  \partial U_{0}:\partial U_{1}:\partial U_{2}\right)  $ is an
almost embedding of $\overline{X}$ in $\mathbb{CP}_{2}$ where $U_{\ell}$ is
the unique harmonic distribution extension of $u_{\ell}$ with $c_{\ell}$ as
family of residues. Then $G_{c}^{ae}\left(  bX\right)  $ is a dense open
subset of $C^{\infty}\left(  bX\right)  ^{3}$.
\end{theorem}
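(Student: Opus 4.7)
The proof splits into openness and density.

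Openness of $G_{c}^{ae}(bX)$ is relatively routine. The map $\Phi : (u_{\ell})_{0 \leq \ell \leq 2} \mapsto (\partial U_{\ell})_{0 \leq \ell \leq 2}$, with $U_{\ell} = \widetilde{u_{\ell}}^{c_{\ell}}$, is continuous from $C^{\infty}(bX)^{3}$ into the space of smooth triples of weakly holomorphic $(1,0)$-forms on $\overline{X}$, by elliptic regularity for the Dirichlet problem with prescribed logarithmic singularities at the nodes (compare \cite[prop.~2]{HeG-MiV2013}). The conditions defining $G_{c}^{ae}(bX)$ --- non-vanishing of $\partial U_{0}$ on $bX$, boundary diffeomorphism near $bX$, interior immersion outside a finite exceptional set, and analytic image with proper inverse image of $bX$ --- are all stable under small $C^{\infty}$ perturbations of the triple of forms; composed with the continuity of $\Phi$ this yields openness.

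For density, the plan is to reduce to a density statement for triples of weakly holomorphic 1-forms and then apply a Bishop-Runge argument. Writing $U_{\ell} = U_{\ell}^{c_{\ell}} + H_{\ell}$, where $U_{\ell}^{c_{\ell}}$ is a fixed harmonic distribution with residues $c_{\ell}$ and $H_{\ell}$ is the ordinary harmonic extension of a complex-valued boundary datum, the affine map $u \mapsto \partial \widetilde{u}^{c}$ has image dense in (modulo constants on each boundary component, in fact equal to) the affine space $\mathcal{H}_{c}$ of smooth weakly holomorphic 1-forms on $\overline{X}$ with residues $c$: complex-valued Dirichlet data produce harmonic distributions whose $\partial$ can realize arbitrary complex periods, so every element of $\mathcal{H}_{c}$ is attained in the closure of the image. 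Density of $G_{c}^{ae}(bX)$ is thereby reduced to density in $\mathcal{H}_{c_{0}} \times \mathcal{H}_{c_{1}} \times \mathcal{H}_{c_{2}}$ of those triples $(\omega_{\ell})$ for which $[\omega_{0} : \omega_{1} : \omega_{2}]$ is an almost embedding of $\overline{X}$ into $\mathbb{CP}_{2}$. The latter is then established in stages: (i) a generic perturbation renders $\omega_{0}$ non-vanishing on $bX$; (ii) Runge-type approximation by 1-forms with prescribed polar parts at the nodes forces the relevant $2\times 2$ Wronskians to be non-vanishing on $\overline{X}$, making the canonical map an immersion; (iii) a transversality argument (Sard applied on an appropriate jet space) reduces self-intersections of the canonical map to a finite exceptional set $E$ while preserving the boundary embedding and the properness of the preimage of $bX$.

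The main obstacle is step (ii): adapting Bishop's perturbation technique, originally developed for affine embeddings of Stein manifolds into $\mathbb{C}^{n}$, to the projective setting with nodal curves and fixed residues. Near each node the allowable perturbations must lie in a finite-codimensional subspace of the space of meromorphic 1-forms, so the construction has to combine a Mittag-Leffler assembly of holomorphic forms with prescribed polar parts with a Sard-type transversality on jet bundles. The permissiveness of the almost embedding definition --- new nodes may appear in the image, and old nodes may disappear --- provides essential flexibility: only the finite exceptional set $E$ needs to be controlled, rather than the full nodal configuration of $X$.
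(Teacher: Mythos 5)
Your overall architecture (reduce to triples of weakly holomorphic forms, extend them past $\overline{X}$ by a Runge-type lemma, then run a Bishop-style construction: embed in high-dimensional projective space and project down generically) is the one the paper follows, and your steps (i) and (ii) correspond to its lemmas. The gap is in step (iii). For the target $\mathbb{CP}_{2}$ the Sard/transversality dimension count for interior self-intersections is exactly borderline and fails: in the Bishop reduction the double-point incidence map is $I:\mathbb{C}\times W\times W\to\mathbb{C}^{n+1}$, $(t,z,z')\mapsto t\left[\tau(z)-\tau(z')\right]$, whose source has complex dimension $3$; injectivity of the projected map requires $(a,1)\notin\operatorname{Im}I$, which holds for almost all $a\in\mathbb{C}^{n}$ only when $3<n+1$, i.e.\ $n\geqslant3$. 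For $n=2$ the image of $I$ can have positive measure, so no transversality argument in this family (nor on a jet space: the perturbations are constrained to canonical maps) yields that the self-intersection locus is finite, let alone that the image is an analytic curve onto whose regular part the map is an isomorphism of degree one. This is precisely why the Bishop--Narasimhan theorem, and Theorem~\ref{T AEB2}, only claim genuine embeddings for $n\geqslant3$.

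What the paper does instead is extract from the Sard argument only the three conditions that \emph{are} within reach for $n=2$ (because the relevant incidence sets $I(\mathbb{C}\times\gamma\times\gamma)$ and $I(\mathbb{C}\times W\times\gamma)$ have dimension $<3$): the map is an immersion, it embeds $\gamma=bX$, and it is proper in the sense $F^{-1}(F(bX))=bX$. The finiteness of the interior exceptional set and the isomorphism onto $\operatorname{Reg}Y$ are then supplied by a separate statement, Theorem~\ref{P/ Immersion} (the ``argument principle''): properness plus Remmert's proper mapping theorem make $Y=F(X)$ an analytic curve, $F$ has a well-defined degree $\nu$ over $\operatorname{Reg}Y$ with $F_{*}[X]=\nu[Y]$, and
\begin{equation*}
\nu\, d[Y]=d\,F_{*}[X]=F_{*}[\gamma]=[\delta]
\end{equation*}
because $F|_{\gamma}$ is injective, forcing $\nu=1$. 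Your proposal treats the condition $F^{-1}(F(bX))=bX$ as something merely to be ``preserved,'' whereas it is the hypothesis that must be actively arranged by the Sard argument and that carries the whole weight of the conclusion via this degree computation. Without this (or an equivalent) ingredient, step (iii) does not close, and for the same reason your openness argument's claim that ``immersion injective outside a finite set'' is stable under perturbation also ultimately rests on this theorem rather than on routine stability.
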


Such generic data are closely related to DN-data used in~\cite{HeG-MiV2013} to
solve the inverse Dirichlet-to-Neumann problem~; these DN-data are boundary
data of the form $\left(  \gamma,u,\left(  \partial\widetilde{u}^{c}\right)
\left\vert _{\gamma}\right.  \right)  $ where $\gamma$ is the oriented
boundary of $X$ and $u=\left(  u_{\ell}\right)  _{0\leqslant\ell\leqslant2}\in
G_{c}^{ae}\left(  \gamma\right)  $. Hence, theorem~\ref{T AEB1} tell us in a
certain sense that random data are good. However, it is particularly relevant
for our inverse problem that one can a priori check that a given $u\in
C^{\infty}\left(  bX\right)  ^{3}$ is generic, that is in $G_{c}^{ae}\left(
bX\right)  $. Theorem below, which uses \cite[th. 3a]{HeG-MiV2007} and will be
proved in an other paper, gives a criterion for this genericity question in
terms of shock-wave decomposition of a boundary integral.

\begin{theoremUN}
[Genericity criterion]Let $X$ be an open bordered Riemann surface with
connected boundary $u=\left(  u_{\ell}\right)  _{0\leqslant\ell\leqslant2}\in
C^{\infty}\left(  bX\right)  ^{3}$ such that $0\notin\left(  \partial
\widetilde{u}_{0}\right)  \left(  bX\right)  $ and $\left(  f_{\ell}\right)
_{1\leqslant\ell\leqslant2}=\left(  \frac{\left(  \partial\widetilde{u}%
_{1}\right)  \left\vert _{bX}\right.  }{\left(  \partial\widetilde{u}%
_{0}\right)  \left\vert _{bX}\right.  },\frac{\left(  \partial\widetilde{u}%
_{1}\right)  \left\vert _{bX}\right.  }{\left(  \partial\widetilde{u}%
_{0}\right)  \left\vert _{bX}\right.  }\right)  $ is an embedding. Consider
Cauchy-Fantappie indicatrix of the form $f_{1}d\left(  \xi_{0}+\xi_{1}%
f_{1}+f_{2}\right)  $ which is the function%
\[
G:\mathbb{C}^{2}\ni\left(  \xi_{0},\xi_{1}\right)  \mapsto\frac{1}{2\pi i}%
\int_{\partial X}f_{1}\frac{d\left(  \xi_{0}+\xi_{1}f_{1}+f_{2}\right)  }%
{\xi_{0}+\xi_{1}f_{1}+f_{2}}.
\]
Then $u\in G_{c}^{ae}\left(  bX\right)  $ if and only if there is a non empty
open set $W$ of $\mathbb{C}^{2}$\ and mutually distinct holomorphic functions
$h_{1},...,h_{p}$\ on $W$ satisfying the shock-wave equation $h\frac{\partial
h}{\partial\xi_{0}}=\frac{\partial h}{\partial\xi_{1}}$ such that on $W$%
\begin{equation}
\frac{\partial^{2}}{\partial\xi_{0}^{2}}(G-%
%TCIMACRO{\dsum \limits_{1\leqslant j\leqslant p}}%
%BeginExpansion
{\displaystyle\sum\limits_{1\leqslant j\leqslant p}}
%EndExpansion
h_{j})=0~~~\&~~~\frac{\partial^{2}G}{\partial\xi_{0}^{2}}\neq0.
\label{F/ caract G}%
\end{equation}

\end{theoremUN}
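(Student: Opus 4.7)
The plan is to interpret the Cauchy--Fantappi\'e indicatrix $G$, via Stokes and the residue theorem, as a sum of implicit-function values of $f_{1}$ at the solutions in $X$ of $\xi_{0}+\xi_{1}f_{1}+f_{2}=0$, each of which naturally satisfies the shock-wave equation, and then to invoke \cite[th.~3a]{HeG-MiV2007} to go backwards from the shock-wave decomposition of $G$ to a meromorphic extension of $(f_{1},f_{2})$ into $X$. The equivalence then reduces to the dictionary between the boundary indicatrix and meromorphic continuation for a pair of CR functions.

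\textbf{Direction $(\Rightarrow)$.} If $u\in G_{c}^{ae}(bX)$, then $f_{1}$ and $f_{2}$ extend meromorphically to $X$ as the ratios $\partial\widetilde{u}_{1}/\partial\widetilde{u}_{0}$ and $\partial\widetilde{u}_{2}/\partial\widetilde{u}_{0}$ of holomorphic $1$-forms. Set $P_{(\xi_{0},\xi_{1})}(z):=\xi_{0}+\xi_{1}f_{1}(z)+f_{2}(z)$. For $(\xi_{0},\xi_{1})$ in a dense open subset $W$ of $\mathbb{C}^{2}$, $P_{(\xi_{0},\xi_{1})}$ has $p$ simple, mutually distinct zeros $z_{1},\ldots,z_{p}$ in $X$, disjoint from $bX$ and from the common poles of $f_{1},f_{2}$. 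Stokes' theorem applied to $f_{1}\,dP/P$ yields
\[
G(\xi_{0},\xi_{1})=\sum_{j=1}^{p}f_{1}(z_{j}(\xi_{0},\xi_{1}))+\Pi(\xi_{0},\xi_{1}),
\]
where $\Pi$ gathers the residue contributions at the poles of $f_{1},f_{2}$. Setting $h_{j}:=f_{1}\circ z_{j}$ and differentiating $P(z_{j})=0$ implicitly in $\xi_{0}$ and $\xi_{1}$ gives $\partial_{\xi_{1}}z_{j}=h_{j}\,\partial_{\xi_{0}}z_{j}$, hence the shock-wave equation $\partial_{\xi_{1}}h_{j}=h_{j}\,\partial_{\xi_{0}}h_{j}$. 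A Laurent-expansion analysis at each pole shows that $\Pi$ is affine in $\xi_{0}$, so $\partial_{\xi_{0}}^{2}\Pi\equiv 0$ on $W$ and the required decomposition holds. Non-vanishing of $\partial_{\xi_{0}}^{2}G$ on a possibly smaller open subset follows because the boundary embedding hypothesis forces at least one branch $h_{j}$ to be genuinely non-affine in $\xi_{0}$.

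\textbf{Direction $(\Leftarrow)$.} This is the core of the argument and rests on \cite[th.~3a]{HeG-MiV2007}, which characterizes the existence of a meromorphic extension to $X$ of CR boundary data by precisely the shock-wave form of their Cauchy--Fantappi\'e indicatrix. Applied to $(f_{1},f_{2})$, it produces meromorphic extensions of $f_{1}$ and $f_{2}$ to $X$, hence a canonical map $[\partial\widetilde{u}_{0}:\partial\widetilde{u}_{1}:\partial\widetilde{u}_{2}]:\overline{X}\to\mathbb{CP}_{2}$ whose restriction to $bX$ is the prescribed embedding. The condition $\partial_{\xi_{0}}^{2}G\not\equiv 0$ then rules out a degenerate (rank-zero) extension, and together with the boundary embedding hypothesis upgrades this meromorphic extension to an almost embedding of $\overline{X}$ in $\mathbb{CP}_{2}$ in the sense of the paper. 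Hence $u\in G_{c}^{ae}(bX)$.

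\textbf{Main obstacle.} The hard direction is $(\Leftarrow)$, whose crux---the reconstruction of meromorphic continuations from a shock-wave decomposition of the indicatrix---is imported wholesale from \cite[th.~3a]{HeG-MiV2007}. A subsidiary technical difficulty in $(\Rightarrow)$ is the careful treatment of $\Pi$ at the possibly high-order poles of $f_{1},f_{2}$ arising at interior zeros of $\partial\widetilde{u}_{0}$, where one must verify by explicit residue computation in local coordinates that the combined contribution remains affine in $\xi_{0}$, so that its second derivative vanishes.
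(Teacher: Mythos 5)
This statement is not proved in the paper: the text explicitly says the genericity criterion ``uses \cite[th.~3a]{HeG-MiV2007} and will be proved in an other paper''. So there is no in-paper argument to compare yours against; what follows is an assessment of your sketch on its own terms.

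Your forward direction has the right mechanism (the zeros $z_{j}(\xi)$ of $\xi_{0}+\xi_{1}f_{1}+f_{2}$ in $X$ contribute $h_{j}=f_{1}\circ z_{j}$, and implicit differentiation of $P(z_{j})=0$ does give $\partial_{\xi_{1}}h_{j}=h_{j}\partial_{\xi_{0}}h_{j}$), and the claim that the residue contribution $\Pi$ at the interior zeros of $\partial\widetilde{u}_{0}$ is affine in $\xi_{0}$ is correct, though you only flag the computation rather than perform it. The genuine gap in this direction is the last sentence: nothing in the hypotheses forces some $h_{j}$ to be non-affine in $\xi_{0}$. If $f_{1}$ is constant --- which is compatible both with $\left(  f_{1},f_{2}\right)  $ embedding $bX$ and with $\left[  \partial\widetilde{u}_{0}:\partial\widetilde{u}_{1}:\partial\widetilde{u}_{2}\right]  $ embedding $\overline{X}$ (take $X$ the disc, $f_{1}=c$, $f_{2}=z$) --- then every $h_{j}$ is constant and $\partial_{\xi_{0}}^{2}G\equiv0$, so the second condition of~(\ref{F/ caract G}) fails while $u\in G_{c}^{ae}\left(  bX\right)  $. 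Either the statement needs a further nondegeneracy hypothesis or this step needs an actual argument; your proposal supplies neither.

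The backward direction, which you correctly identify as the core, is not really argued. Note first that $f_{1}$ and $f_{2}$ \emph{always} extend meromorphically to $X$, as ratios of the holomorphic forms $\partial\widetilde{u}_{\ell}$ and $\partial\widetilde{u}_{0}$; so ``producing meromorphic extensions'' via \cite[th.~3a]{HeG-MiV2007} is not the content of the implication. What must be extracted from the shock-wave decomposition is that the canonical map $\left[  \partial\widetilde{u}_{0}:\partial\widetilde{u}_{1}:\partial\widetilde{u}_{2}\right]  =\left[  1:f_{1}:f_{2}\right]  $ is an immersion of $\overline{X}$, injective off a finite set, with $F^{-1}\left(  F\left(  bX\right)  \right)  =bX$ --- i.e.\ the properness and degree-one statements that in this paper are handled by theorem~\ref{P/ Immersion} together with an argument that the number $p$ of shock-wave branches pins down the sheet number of $\left(  f_{1},f_{2}\right)  $ over generic points. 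Your phrase ``upgrades this meromorphic extension to an almost embedding'' is precisely the assertion to be proved, and as written it is a placeholder rather than a proof. Since the authors themselves defer the proof, I cannot fault you for not reconstructing it, but the proposal should not be read as closing the gap.
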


When $\left(  u_{\ell}\right)  \in G_{c}^{ae}\left(  bX\right)  $, $Y=\left[
\partial\widetilde{u}_{0}^{c}:\partial\widetilde{u}_{1}^{c}:\partial
\widetilde{u}_{2}^{c}\right]  \left(  X\right)  $ can be seen as a concrete
presentation of $X$. If $Y$ can be explicitly computed by boundary data, one
has a tool to solve the reconstruction problem which is of essential interest
for applications. This is the spirit of the Cauchy type integral formulas
written in~\cite[th. 2]{HeG-MiV2007} and~\cite[th. 5]{HeG-MiV2013}. Note
however that serious effort is still to be made to make these effective~;
\cite[prop. 3.33]{DoP-HeG1997}, \cite{DiT1998a}, \cite[th. 4]{HeG-MiV2007},
\cite[th. 8.3]{HaFLaJ2004} and \cite[th. 1.2]{WaR2008} could be clues for this
goal. The natural question of what to do with non or less generic data is open.

The first step to prove theorem~\ref{T AEB1} is to establish a weak version of
it by an adaptation of the Bishop's technique to produce an affine embedding
of a Stein manifold.

\begin{theorem}
[Weak approximation]\label{P/ PerturbPlgmnt}Let $\Sigma$ be an open nodal
curve and $X\subset\subset\Sigma$ open and smoothly bordered such that
$\Sigma\backslash X\subset\operatorname*{Reg}\Sigma$. For $0\leqslant
\ell\leqslant2$, let $U_{\ell}$ be a harmonic distribution smooth in a
neighborhood of $\Sigma\backslash X$. Then, there is a 3-uple $V=\left(
V_{\ell}\right)  _{0\leqslant\ell\leqslant2}$ of harmonic distributions on
$\Sigma$ smooth near $\Sigma\backslash X$ such that $U-V$ is arbitrarily close
to $0$ in $C^{\infty}\left(  \overline{X}\right)  ^{3}$, $\sigma=\left[
\partial V_{0}:\partial V_{1}:\partial V_{2}\right]  $ is an immersion of
$\overline{X} $ into $\mathbb{CP}_{2}$ which embeds $\gamma=bX$ and $\left(
\sigma\left\vert _{\overline{X}}\right.  \right)  ^{-1}\left(  \delta\right)
=\gamma$ where $\delta=\sigma\left(  \gamma\right)  $. If $U$ is real valued,
$V$ can be chosen so.
\end{theorem}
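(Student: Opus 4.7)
The plan is to adapt Bishop's finite-dimensional perturbation technique to the projective / canonical-map setting, built on top of a Runge-type approximation for harmonic distributions on the nodal curve $\Sigma$. One would construct $V$ as
\[
V_{\ell}(\lambda) = U_{\ell} + \sum_{k=1}^{N} \lambda_{k,\ell}\,h_{k},\qquad \ell=0,1,2,
\]
where $h_{1},\ldots,h_{N}$ are harmonic distributions on $\Sigma$, smooth near $\Sigma\setminus X$ and with vanishing residues at every interior node of $X$ (so that each $V_{\ell}$ inherits the residues of $U_{\ell}$ and $U_{\ell}-V_{\ell}$ stays smooth), and $\lambda\in\mathbb{C}^{3N}$ is a small parameter ($\lambda\in\mathbb{R}^{3N}$ with real $h_{k}$ in the real case). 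The aim is then to find $\lambda$ arbitrarily close to $0$ for which all four required properties of $\sigma_{\lambda}=[\partial V_{0}(\lambda):\partial V_{1}(\lambda):\partial V_{2}(\lambda)]$ hold simultaneously.

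The crucial input is that the family $\{h_{k}\}$ can be chosen \emph{jet-rich enough} on $\overline{X}$: for any finite collection of points and tangent directions the corresponding $1$- or $2$-jets of $\partial V(\lambda)$ can be prescribed independently by varying $\lambda$. This jet-richness follows from a Runge-type approximation for weakly holomorphic $(1,0)$-forms on nodal curves (the harmonic-approximation engine of Theorem~\ref{T/ Runge}), together with the classical fact that on an open Riemann surface global holomorphic $(1,0)$-forms separate points and tangent vectors.

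With jet-richness in hand, each of the four desired properties of $\sigma_{\lambda}$ is open and dense in a small ball around $\lambda=0$, by a standard parameter-transversality / Sard argument. Indeed: (i) $(\partial V_{0},\partial V_{1},\partial V_{2})$ vanishing simultaneously at a point of $\overline{X}$ imposes $3$ complex conditions on a $1$-complex-dimensional source and is therefore avoided generically; (ii) failure of the immersion property along a branch is a codimension-$1$ condition; (iii) $\sigma_{\lambda}$ collapsing two distinct points of $\gamma$ has codimension $4$ in $\mathbb{CP}_{2}$, whereas $(\gamma\times\gamma)\setminus\Delta$ has real dimension $2$; (iv) $\sigma_{\lambda}$ sending an interior point of $X$ to a point of $\sigma_{\lambda}(\gamma)$ again has codimension $4$ in $\mathbb{CP}_{2}$, while $X\times\gamma$ has real dimension $3$. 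A finite intersection of such dense opens inside a small ball around $\lambda=0$ provides the required parameter; in the real case one restricts every argument to the real slice $\mathbb{R}^{3N}\subset\mathbb{C}^{3N}$.

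The step I expect to be the main obstacle is the jet-richness input for the perturbation space. On a smooth Riemann surface it comes cleanly from Runge approximation combined with the fact that global holomorphic $(1,0)$-forms on an open Riemann surface separate jets, but on a nodal curve the residue constraints at each node couple the different branches and could a priori shrink the space of jets one can realise. Verifying that these constraints are compatible with prescribing arbitrary jets at any finite collection of smooth points of $\overline{X}$ is exactly where the adaptation of Bishop's technique to nodal curves is non-routine. Once this is granted, the transversality argument for property~(iv) — the one that requires genuine submersivity of the evaluation map on the mixed product $X\times\gamma$ — follows from applying the same jet-richness statement to pairs consisting of one interior and one boundary point.
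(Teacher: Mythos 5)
Your overall strategy---a finite--dimensional Bishop-type perturbation of $\partial U$ followed by Sard/transversality---is the same one the paper uses, but the step you defer as ``the main obstacle'' is the entire content of the proof, and the form in which you state it is too weak to run your own argument. Prescribing jets of $\partial V(\lambda)$ at any \emph{finite} collection of points does not make the parametrized difference maps on the continua $\gamma\times\gamma$ and $X\times\gamma$ submersive, which is what your codimension counts in (ii)--(iv) implicitly require; what is needed is \emph{uniform} separation of points and tangent vectors over compact sets of pairs. The paper obtains this by Bishop's two-stage scheme rather than by a direct density argument: using the Stein properties of the normalization $R$ (the facts (1) and (2) in Section~\ref{S 3}) and a compactness/covering argument on $\overline{W}\times\overline{W}$, Lemma~\ref{L/ plgmntCPn} first produces finitely many $\eta_j\in\mathcal{O}(R)$ such that $[\partial\eta_0:\cdots:\partial\eta_{r+1}]$ is an injective immersion into a high-dimensional $\mathbb{CP}_{n+1}$; only then is the target reduced to $\mathbb{CP}_2$ by generic linear projections (Lemmas~\ref{L/ plgmntReduc} and~\ref{L/ plgmTot}), where Sard is applied to the explicit maps $T$ and $I$ whose sources have real dimension $<2(n+1)$. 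The careful bookkeeping of which property survives at which target dimension (full injectivity off $S$ only for $n\geqslant3$; immersion, boundary embedding and $\sigma^{-1}(\delta)=\gamma$ down to $n=2$) is exactly what your one-shot argument cannot see, and the node analysis---well-definedness of $[\theta^a]$ at the poles, rank~$1$ at the nodes via the expansion~(\ref{F/ pole}), and keeping $[\theta^a](S)$ disjoint from $[\theta^a](\overline{W}\setminus S)$---which you acknowledge but do not resolve, occupies most of those two lemmas. (Your claim that failure of immersivity is ``codimension~$1$'' is also off---vanishing of the derivative of a map to $\mathbb{CP}_2$ along a curve is two complex conditions---though the conclusion is still generic.)

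Two further points. First, you correctly insist that the perturbing forms be $\partial$ of harmonic distributions; the paper achieves this because all corrections are exact, $K_\ell=dk_\ell$ with $k_\ell$ holomorphic, so that $R_\ell=2\operatorname{Re}k_\ell$ is harmonic with $\partial R_\ell=K_\ell$. Second, your treatment of the real case by restricting $\lambda$ to the real slice $\mathbb{R}^{3N}\subset\mathbb{C}^{3N}$ is not justified as stated: the good set of parameters is only known to be the complement of a Lebesgue-null subset of $\mathbb{C}^{3N}$, and such a set can a priori miss $\mathbb{R}^{3N}$ entirely, so every measure-zero argument would have to be redone on the real slice. The paper sidesteps this by keeping the parameters complex throughout and producing real harmonic corrections only at the very end, as real parts of holomorphic functions.
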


The second and last step of the proof is theorem below named after the formula
implying that a holomorphic function on an open set of $\mathbb{C}$ continuous
up to the boundary is injective if its boundary restriction is of so.

\begin{theorem}
[Argument principle]\label{P/ Immersion}Let $X$ be an open bordered nodal
curve, $F:\overline{X}\longrightarrow\mathbb{CP}_{2}$ an immersion such that
$F\left\vert _{\gamma}\right.  $ is injective and $F^{-1}\left(  F\left(
bX\right)  \right)  =bX$. Then, $Y=F\left(  X\right)  $ is a complex curve,
$\operatorname*{Sing}Y$ is a finite set and $F$ is an isomorphism from
$X\backslash F^{-1}\left(  \operatorname*{Sing}Y\right)  $ onto
$\operatorname*{Reg}Y$.
\end{theorem}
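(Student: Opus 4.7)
The plan is to extract $Y$ as a complex curve by Remmert's proper mapping theorem, and then to finish by a degree-one covering argument on the two normalizations.

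First I would observe that the hypotheses $F|_{\gamma}$ injective and $F^{-1}(F(bX))=bX$ together imply that $F\colon X\to\mathbb{CP}_{2}\setminus F(bX)$ is proper: for any compact $K\subset\mathbb{CP}_{2}\setminus F(bX)$, the set $F^{-1}(K)$ is a closed subset of the compact $\overline{X}$ disjoint from $\gamma$, hence compact in $X$. Applying Remmert's proper mapping theorem branchwise (each branch of $X$ is a smooth Riemann surface to which Remmert applies) and taking the finite union of images then shows that $Y=F(X)$ is a one-dimensional closed analytic subset of $\mathbb{CP}_{2}\setminus F(bX)$.

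Next, I would show that $F$ is injective on some open neighborhood $U$ of $\gamma$ in $\overline{X}$, and deduce finiteness of $\operatorname{Sing} Y$. If distinct points $x_{n}\neq y_{n}$ of $X$ satisfied $F(x_{n})=F(y_{n})$ with $x_{n}\to x\in\gamma$, then $y_{n}\to y\in F^{-1}(F(\gamma))=\gamma$, and injectivity of $F|_{\gamma}$ would force $y=x$, contradicting local injectivity of the immersion at $x$. Hence $F$ embeds $U$ into $\mathbb{CP}_{2}$, so $Y$ is smooth near $bY=F(\gamma)$ and $\operatorname{Sing} Y$ avoids some neighborhood of $bY$; since $\operatorname{Sing} Y$ is discrete in $Y$ and $\overline{Y}$ is compact, $\operatorname{Sing} Y$ is finite.

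For the isomorphism statement I would pass to normalizations. Let $\pi\colon\widetilde X\to X$ and $\nu\colon\widetilde Y\to Y$ be the respective normalizations, so $\widetilde X$ is a smooth open bordered Riemann surface obtained by cutting the branches of $X$ open at the nodes. By the universal property of $\nu$, the composition $F\circ\pi$ lifts uniquely to a holomorphic $\widehat F\colon\widetilde X\to\widetilde Y$, which is (i) a local biholomorphism since $F$ is a branchwise immersion, (ii) proper since $F$ and the finite maps $\pi,\nu$ are all proper, and (iii) bijective between the boundaries: $b\widetilde X\simeq\gamma$ and $b\widetilde Y\simeq bY$ (nodes of $X$ and singular points of $Y$ being interior), and $F|_{\gamma}$ is already bijective onto $bY$. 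A proper unramified covering of open bordered Riemann surfaces that is bijective on boundaries has degree one, so $\widehat F$ is a biholomorphism. Over $\operatorname{Reg} Y$ the map $\nu$ is bijective, so every $y\in\operatorname{Reg} Y$ has a single lift in $\widetilde Y$, hence a single lift in $\widetilde X$, which must lie over $\operatorname{Reg} X$: otherwise the several branches at a node of $X$ would all map to $y$, making $y$ singular in $Y$. Consequently $F$ on $X\setminus F^{-1}(\operatorname{Sing} Y)$ is, via $\pi$ and $\widehat F$ and $\nu$, a composition of three local biholomorphisms, hence an isomorphism onto $\operatorname{Reg} Y$.

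The main obstacle I foresee is bookkeeping the three distinct sources of singularity in the picture, namely the nodes of $X$, the self-intersections of the immersion $F$, and the nodes of $Y$, and in particular verifying that the boundary bijection is compatible with both normalizations; once $\widehat F$ is known to be a proper local biholomorphism of bordered Riemann surfaces, the degree-one conclusion is essentially the classical argument principle.
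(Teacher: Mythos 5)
Your argument is correct, but it reaches the conclusion by a genuinely different mechanism than the paper. You share the first step --- properness of $F\vert_{X}$ deduced from $F^{-1}\left(  F\left(  bX\right)  \right)  =bX$, then Remmert's proper mapping theorem to get $Y$ analytic of pure dimension one --- but from there the paper stays with $F$ itself: it reduces to $X$ smooth (nodes are discarded at the outset, a nodal curve being a Riemann surface with points identified) and connected, notes that the immersion $F$ has a well-defined degree $\nu$ over $\operatorname{Reg}Y$, and computes $\nu$ by pushing forward currents of integration: $\nu\,d\left[  Y\right]  =dF_{\ast}\left[  X\right]  =F_{\ast}d\left[  X\right]  =F_{\ast}\left[  \gamma\right]  =\left[  \delta\right]  $, the last equality being exactly where injectivity of $F\vert_{\gamma}$ enters, while $d\left[  Y\right]  $ is locally $\pm\left[  \delta\right]  $ because $F$ is an immersion; hence $\nu=1$. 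That is the ``argument principle'' of the title: the sheet number is read off from the boundary cycle. You instead lift $F$ to the two normalizations and invoke the fact that a proper unramified covering of bordered surfaces which is bijective on boundaries has one sheet --- the same degree computation, repackaged as covering theory rather than as an identity of currents. Your route costs the universal property of normalization and the bookkeeping you yourself flag, but it buys two things the paper leaves implicit: an explicit proof that $\operatorname{Sing}Y$ is finite (via the collar of $\gamma$ on which $F$ is injective, together with properness to keep $\operatorname{Sing}Y$ away from $\delta$), and a more transparent derivation of the final isomorphism statement, including why nodes of $X$ must land in $\operatorname{Sing}Y$. The paper's route is shorter; both hinge on the same use of the boundary hypothesis.
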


A slight modification of the proof of theorem~\ref{T AEB1} enables to
establish theorem~\ref{T AEB2} below which is about projective embeddings or
immersions of an open bordered Riemann surface. It can be seen as a variation
of the Bishop-Narasimhan theorem for the first case and of a Bishop
result~\cite{BiE1961} for the second. Theorem~\ref{T AEB2} shows that it is
not necessary to pick up some complicated line bundle to realize a projective
embedding of an open bordered surface since in a certain sense, canonical maps
chosen at random are embeddings. Another interesting feature of
theorem~\ref{T AEB2} is that at the difference of some embedding's theorems,
it doesn't use the genus which, in general, does not come easily at hand,
especially in the inverse problems we are interested in.

\begin{theorem}
[Approximation by almost embeddings]\label{T AEB2}Let $X$ be an open bordered
Riemann surface and $K\left(  \overline{X}\right)  $ the space of holomorphic
$\left(  1,0\right)  $-forms on $X$ which are smooth up to $bX$. If
$n\in\mathbb{N}^{\ast}$, we denotes by $G_{n}^{ae}\left(  X\right)  $ the set
of $\left(  \omega_{\ell}\right)  _{0\leqslant\ell\leqslant n}\in K\left(
\overline{X}\right)  ^{n+1}$ such that $\left[  \omega\right]  $ is an almost
embedding of $\overline{X}$ into $\mathbb{CP}_{n}$ and by $G_{n}^{e}\left(
X\right)  $ the set of those whose associated canonical map is actually an
embedding. Then, $G_{n}^{e}\left(  X\right)  $ is a dense open subset of
$K\left(  \overline{X}\right)  ^{n+1}$ when $n\geqslant3$ and $G_{2}%
^{ae}\left(  X\right)  $ is a dense open subset of $K\left(  \overline
{X}\right)  ^{3}$.
\end{theorem}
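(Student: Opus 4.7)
The plan is to run the same two-part strategy as for Theorem~\ref{T AEB1}: a Bishop-type density argument followed by the Argument Principle (Theorem~\ref{P/ Immersion}), preceded by a routine verification that the target subsets are open. The only adaptation needed is that we perturb holomorphic $(1,0)$-forms on $\overline{X}$ directly, rather than harmonic distributions on a nodal extension.

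\emph{Openness.} Since $\overline{X}$ is compact and $K(\overline{X})$ carries its natural $C^\infty$ topology, the immersion condition at every point of $\overline{X}$, the injectivity of $[\omega]$ on $bX$ (respectively on $\overline{X}$), and the properness condition $[\omega]^{-1}([\omega](bX))=bX$ are each stable under $C^\infty$-small perturbations: if a sequence $\omega^{(k)}\to\omega$ violated one of them at points $x_k,y_k\in\overline{X}$, a compactness extraction would produce a violation for $\omega$ itself, contradicting its assumed almost-embedding or embedding property.

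\emph{Density for $n=2$.} Given $\omega=(\omega_0,\omega_1,\omega_2)\in K(\overline{X})^3$, extend each $\omega_\ell$ to a holomorphic $(1,0)$-form on a slightly larger open bordered Riemann surface $\Sigma\supset\supset\overline{X}$. Rereading the proof of Theorem~\ref{P/ PerturbPlgmnt} with $\omega_\ell$ playing the role of $\partial U_\ell$ throughout -- the construction there only modifies $\partial U_\ell$ by small global holomorphic $(1,0)$-forms on $\Sigma$ and never relies on the existence of a single-valued harmonic primitive -- produces $\tau=(\tau_0,\tau_1,\tau_2)\in K(\overline{X})^3$ arbitrarily $C^\infty$-close to $\omega$ such that $[\tau]$ is an immersion of $\overline{X}$ into $\mathbb{CP}_2$ embedding $bX$ and satisfying $[\tau]^{-1}([\tau](bX))=bX$. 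Theorem~\ref{P/ Immersion} then delivers $\tau\in G_2^{ae}(X)$.

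\emph{Density for $n\ge3$ and the main obstacle.} Apply the previous step to the first three components of $\omega=(\omega_0,\ldots,\omega_n)$ and set $\tau_\ell=\omega_\ell$ for $\ell\ge3$. The resulting map $[\tau_0:\cdots:\tau_n]$ is then an almost embedding whose only possible defect is a finite set of interior double points $(x_i,y_i)$. A Runge-type perturbation (possible since $\Sigma$ is Stein and $K(\overline{X})$ is infinite-dimensional) furnishes an arbitrarily small $\eta\in K(\overline{X})$ with $(\eta/\tau_0)(x_i)\ne(\eta/\tau_0)(y_i)$ for every $i$; replacing $\tau_3$ by $\tau_3+\eta$ separates the double points while, by the openness established above, preserving all other properties, yielding an element of $G_n^e(X)$ close to $\omega$. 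The delicate step throughout is the Bishop-type construction encapsulated in Theorem~\ref{P/ PerturbPlgmnt}, which must simultaneously achieve immersivity, boundary-injectivity, and properness; the properness $[\tau]^{-1}([\tau](bX))=bX$ is the genuinely non-trivial requirement, since everything else reduces to codimension counting -- the double-point relation is complex codimension $n$ in the complex $2$-dimensional manifold $\overline{X}\times\overline{X}$, hence generically empty for $n\ge3$ and generically a finite set for $n=2$.
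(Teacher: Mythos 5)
Your strategy is essentially the paper's: Theorem~\ref{T AEB2} is there obtained as the smooth case of Theorem~\ref{T AEB3}, whose proof is precisely ``reread Lemmas~\ref{L/ 1ereAprox}--\ref{L/ plgmTot} with arbitrary $(1,0)$-forms in place of the $\partial U_\ell$'', followed by Theorem~\ref{P/ Immersion}; so your treatment of openness and of the case $n=2$ matches the intended argument. One step of your write-up is, however, literally false: an element of $K(\overline{X})$ does \emph{not} in general extend holomorphically to a larger surface $\Sigma\supset\supset\overline{X}$ --- smoothness up to $bX$ is compatible with $bX$ being a natural boundary. What is available, and what the paper establishes first (Lemma~\ref{L/ RungefaibleLisse}: $(X,X')$ is a Runge pair), is that each $\omega_\ell$ can be \emph{approximated} in $C^\infty_{1,0}(\overline{X})$ by forms holomorphic on a fixed neighborhood $X'$ of $\overline{X}$. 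Since you are proving density, approximation is just as good as extension, but this reduction is a genuine preliminary lemma, not a triviality, and it is needed before Theorem~\ref{P/ PerturbPlgmnt} is even applicable (its hypotheses require the data to live on all of $\Sigma$).

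For $n\geqslant3$ you genuinely diverge from the paper: you first produce an almost embedding using only the first three forms and then kill the finitely many interior double points with one additional form, whereas the paper keeps at least four forms throughout the Bishop dimension reduction and gets global injectivity directly from the Morse--Whitney--Sard count in Lemmas~\ref{L/ plgmntReduc} and~\ref{L/ plgmTot}. Your route is shorter and does close, for the reason you give (perturbing $\tau_3$ cannot create new double points, since the non-injectivity locus of the full map is contained in that of its first three components, which are unchanged), but the separating condition needs a small correction: in affine coordinates the requirement at a double point $(x_i,y_i)$ is that $(\eta/\tau_0)(x_i)-(\eta/\tau_0)(y_i)$ avoid the specific value $(\tau_3/\tau_0)(y_i)-(\tau_3/\tau_0)(x_i)$, not merely be nonzero (and one must use $\tau_1$ or $\tau_2$ as denominator if $\tau_0$ vanishes at $x_i$ or $y_i$). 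Replacing $\eta$ by $t\eta$ for all but finitely many small $t$ repairs this. With the Runge approximation inserted and this adjustment made, your proof is correct; the trade-off is that the paper's single pass through the dimension-reduction lemmas handles all $n$ uniformly and transfers verbatim to the nodal setting of Theorem~\ref{T AEB3}, while your two-stage argument is lighter in the smooth case treated here.
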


This theorem is a particular case of theorem~\ref{T AEB3} for which additional
notation is needed. Let $X$ be a nodal curve. If $c$ is an admissible family,
$K_{c}\left(  \overline{X}\right)  $ is the set of weakly holomorphic $\left(
1,0\right)  $-forms on $X$ which are smooth up to $bX$ near $bX$ and have $c$
as family of residues. $K_{c}\left(  \overline{X}\right)  $ is equipped with
the distance $\left(  \alpha,\beta\right)  \mapsto dist\left(  \alpha
-\beta,0\right)  $ where $dist$ is any distance defining the natural topology
of the canonical bundle $K\left(  \overline{X}\right)  $ of smooth $\left(
1,0\right)  $-forms on $\overline{X}$. If $c=\left(  c_{\ell}\right)
_{0\leqslant\ell\leqslant n}$ a 3-uple of admissible families and write
$c_{\ell}=\left(  c_{p,j}^{\ell}\right)  _{p\in\operatorname*{Sing}%
X,~1\leqslant j\leqslant\nu\left(  p\right)  }$, $0\leqslant\ell\leqslant n$.
If the $c_{p,j}=\left(  c_{p,j}^{\ell}\right)  _{0\leqslant\ell\leqslant n}$
are non zero for any $\left(  p,j\right)  $, $c $ is called a \textit{nodal
family} and a \textit{true nodal family} if in addition for any $p\in
\operatorname*{Sing}X$, the $c_{p,j}$, $1\leqslant j\leqslant\nu\left(
p\right)  $, are on a same complex line of $\mathbb{C}^{3}$. In this case, $c$
is said to be \textit{injective }if the map $\operatorname*{Sing}X\ni
p\mapsto\left[  c_{p,1}^{0}:\cdots:c_{p,1}^{n}\right]  $ is of so.

\begin{theorem}
[Approximation by almost embeddings, nodal case]\label{T AEB3}Let $X$ be an
open bordered nodal curve and $c=\left(  c_{\ell}\right)  _{0\leqslant
\ell\leqslant n}$ an injective nodal family. If $n\in\mathbb{N}^{\ast}$, we
denotes by $G_{n,c}^{e}\left(  \overline{X}\right)  $, resp. $G_{n,c}%
^{ae}\left(  \overline{X}\right)  $, the set of $\omega\in K_{n,c}\left(
\overline{X}\right)  =K_{c_{0}}\left(  \overline{X}\right)  \times\cdots\times
K_{c_{n}}\left(  \overline{X}\right)  $ such that $\left[  \omega\right]  $ is
an embedding, resp. an almost embedding, of $\overline{X} $ into
$\mathbb{CP}_{n}$. Then, $G_{n,c}^{e}\left(  X\right)  $ is a dense open
subset of $K_{n,c}\left(  \overline{X}\right)  $ when $n\geqslant3$ and
$G_{2,c}^{ae}\left(  X\right)  $ is a dense open subset of $K_{2,c}\left(
\overline{X}\right)  $.
\end{theorem}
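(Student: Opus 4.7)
The plan is to reduce Theorem~\ref{T AEB3} to Theorem~\ref{P/ PerturbPlgmnt} (weak approximation) together with Theorem~\ref{P/ Immersion} (argument principle), paralleling the route from these two results to Theorem~\ref{T AEB2} in the smooth case, while tracking residues throughout to stay inside $K_{n,c}(\overline{X})$.

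\textbf{Openness.} Since $\overline{X}$ is compact, the immersion property, the injectivity of $[\omega]\vert_{bX}$, the proper-preimage condition $[\omega]^{-1}([\omega](bX))=bX$, and the embedding property (for $n\geqslant 3$) are all open in the $C^{1}$ topology under perturbation of $\omega$ in $K_{n,c}(\overline{X})$. The residues of the $\omega_{\ell}$ at each node $p$ are fixed to $c_{p,j}^{\ell}$, so the image $[\omega](p)$ equals the fixed projective point $[c_{p,j}^{0}:\cdots:c_{p,j}^{n}]$. Injectivity of $c$ guarantees that distinct nodes go to distinct points, while true-nodality lines up the $\nu(p)$ branches at $p$ on a single image, so the behaviour at nodes is preserved under all $C^{\infty}$-small perturbations in $K_{n,c}(\overline{X})$. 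Hence $G^{e}_{n,c}$ and $G^{ae}_{n,c}$ are open.

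\textbf{Density for $n=2$.} Given $\omega\in K_{2,c}(\overline{X})$, I would enlarge $X$ slightly to an open nodal curve $\Sigma$ with $\Sigma\setminus X\subset\operatorname{Reg}\Sigma$ and pick local primitives $U_{\ell}$ of $\omega_{\ell}$ as harmonic distributions on $\Sigma$, smooth near $\Sigma\setminus X$, with the residue data $c_{\ell}$ encoded in their logarithmic singularities. Applying Theorem~\ref{P/ PerturbPlgmnt} yields $V=(V_{0},V_{1},V_{2})$ with $U-V$ arbitrarily $C^{\infty}$-small on $\overline{X}$ and such that $\sigma := [\partial V_{0} : \partial V_{1} : \partial V_{2}]$ is an immersion of $\overline{X}$ embedding $bX$ with proper preimage. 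Theorem~\ref{P/ Immersion} then upgrades $\sigma$ to an almost embedding of $\overline{X}$ into $\mathbb{CP}_{2}$. Any residual mismatch between the residues of $\partial V_{\ell}$ and $c_{\ell}$ is eliminated by adding a small weakly holomorphic $(1,0)$-corrector carrying the difference of principal parts at each node; openness of $G^{ae}_{2,c}$ ensures the correction keeps us inside, producing a nearby element of $G^{ae}_{2,c}(\overline{X})$.

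\textbf{Density for $n\geqslant 3$ and main obstacle.} For $\omega\in K_{n,c}(\overline{X})$, apply the $n=2$ argument to $(\omega_{0},\omega_{1},\omega_{2})$ to make this triple an almost embedding into $\mathbb{CP}_{2}$; the failure of injectivity is then confined to finitely many pairs $(p,q)\in X\times X$ with $[\omega_{0}:\omega_{1}:\omega_{2}](p)=[\omega_{0}:\omega_{1}:\omega_{2}](q)$, together with possibly collapsed nodes. For each such pair, a small perturbation of $\omega_{3}$ within $K_{c_{3}}(\overline{X})$ by a weakly holomorphic form separating the ratios at $p$ and $q$ destroys the coincidence; iterating over the finite list (using $\omega_{3},\dots,\omega_{n}$ if needed) upgrades the almost embedding to an embedding into $\mathbb{CP}_{n}$. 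The main obstacle is manufacturing these small perturbations inside $K_{c_{\ell}}(\overline{X})$: one must exhibit weakly holomorphic $(1,0)$-forms on $\Sigma$ with prescribed residues $c_{\ell}$ and prescribed values at an arbitrary finite set of points. This rests on a Mittag-Leffler / Runge type construction on the open nodal curve compatible with the residue constraints; injectivity of $c$ is crucial because it keeps distinct nodes mapped to distinct images, eliminating the one type of coincidence one cannot cure by perturbing within $K_{n,c}(\overline{X})$.
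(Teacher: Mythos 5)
Your overall architecture (openness by stability on the compact $\overline{X}$; density via the weak approximation theorem plus the argument principle; residue rigidity at nodes combined with injectivity of $c$) is the right one, but two steps carry the real content and neither goes through as written. First, the reduction to Theorem~\ref{P/ PerturbPlgmnt} by ``picking local primitives $U_{\ell}$ of $\omega_{\ell}$ as harmonic distributions on $\Sigma$'' is not licensed: that theorem takes as input \emph{global} harmonic distributions, and a general $\omega_{\ell}\in K_{c_{\ell}}\left(  \overline{X}\right)  $ has nontrivial periods, so a single-valued $U_{\ell}$ with $\partial U_{\ell}=\omega_{\ell}$ exists only after solving a period-matching problem (one must find an antiholomorphic correction whose periods cancel those of $\int\omega_{\ell}$), which you neither state nor prove. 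The paper sidesteps this entirely: it observes that the proof of Theorem~\ref{P/ PerturbPlgmnt}, i.e.\ Lemmas~\ref{L/ 1ereAprox} through~\ref{L/ plgmTot}, never uses exactness of the forms, and reruns those lemmas directly on a not necessarily exact initial tuple. The same observation dissolves your worry about a ``residual mismatch'' of residues: every perturbation in that machinery has the shape $\omega_{\ell}\mapsto\omega_{\ell}+\sum a_{j}\,\partial\eta_{j}$ with $\eta_{j}\in\mathcal{O}\left(  R\right)  $ global holomorphic functions on the normalization, hence is pole-free and leaves all residues untouched at every stage; no corrector form is needed, and none should be improvised, since an uncontrolled ``small weakly holomorphic corrector'' could destroy the almost-embedding property before openness is available.

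Second, the ingredient you flag as the ``main obstacle'' for $n\geqslant3$ --- elements of $K_{c_{\ell}}\left(  \overline{X}\right)  $ with prescribed behaviour at finitely many points --- is precisely what must be proved, and you leave it unproved. The paper obtains it from the Stein-theoretic facts (1) and (2) stated at the start of Section~\ref{S 3} (for any point, resp.\ pair of points, of $R$ there is $h\in\mathcal{O}\left(  R\right)  $ with $\partial_{\Omega}h$ a local coordinate, resp.\ separating the pair), and then, instead of curing coincidences pair by pair, runs a Morse--Whitney--Sard argument over a linear parameter $a$: one first over-embeds into $\mathbb{CP}_{n+1}$ or higher (Lemma~\ref{L/ plgmntCPn}) and then projects down generically one dimension at a time (Lemmas~\ref{L/ plgmntReduc} and~\ref{L/ plgmTot}), the node-versus-regular-point and node-versus-node coincidences being handled there by the rigidity of $\left[  \theta\right]  $ at poles and the injectivity of $c$. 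Your opposite direction --- go up from the $\mathbb{CP}_{2}$ almost embedding by adding separating coordinates --- is in principle viable, since adding coordinates can only shrink coincidence sets and preserves immersivity, but it consumes exactly the same separation lemma and in any case rests on the $n=2$ case, which is where the first gap sits.
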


\noindent That every open bordered nodal curve can be embedded $\mathbb{C}%
^{3}$ is a consequence of a general result of Wiegmann~\cite{WiK1966} and
Sch\"{u}rmann~\cite{ScJ1997}. A nodal version of theorem~A is obtained by
using in its proof theorem~\ref{T AEB3} instead of theorem~\ref{T AEB2}%
.\smallskip

We now turn our attention to the problem of reconstructing a compact Riemann
surface $Z$ from data collected in a small subdomain $S$. More precisely, we
consider the space of functions $U$ which are harmonic outside a finite subset
$P\left(  U\right)  $ of $Z$, have isolated logarithmic singularities at each
point of $P\left(  U\right)  $ and such that $\underset{p\in P\left(
U\right)  }{\Sigma}\operatorname*{Res}\left(  U,p\right)  =0$. This last
condition ensure that $U$ is a harmonic distribution on the nodal surface $X$
where the points of $P\left(  U\right)  $ have been identified. In the sequel,
we speak of such special function as \textit{harmonic distributions}\text{ on
}$Z$.

We denote by $D_{Z}$ the set of $\left(  a,c\right)  $ in $Z^{6}%
\times\mathbb{C}^{3}$ such that $a=\left(  a_{\ell}^{-},a_{\ell}^{+}\right)
_{0\leqslant\ell\leqslant2}$ is a family of six mutually distinct points of
$Z$ and $c=\left(  c_{\ell}\right)  _{0\leqslant\ell\leqslant2}\in
\mathbb{C}^{3}$. If $\left(  a,c\right)  \in D_{Z}$ and $0\leqslant
\ell\leqslant2$, $U_{Z,\ell}^{a,c}$ is a harmonic distribution whose singular
support is $\left\{  a_{\ell}^{-},a_{\ell}^{+}\right\}  $ and has residue $\pm
c_{\ell}$ at $a_{\ell}^{\pm}$~; as a matter of fact, $U_{Z,\ell}^{a,c}$ is a
standard Green bipolar function and while it is determined only up to an
additive constant, $\partial U_{Z,\ell}^{a,c}$ is unique.

For $n\in\mathbb{N}^{\ast}$, $D_{Z,n}$ is the set of $\left(  a,c,p,\kappa
\right)  $ in $Z^{6}\times\mathbb{C}^{3}\times\left(  Z^{n}\right)  ^{3}%
\times\left(  \mathbb{C}^{n}\right)  ^{3}$ such that $\left(  a,c\right)  \in
D_{Z}$ and for any $\ell\in\left\{  0,1,2\right\}  $, $p_{\ell}=\left(
p_{\ell,j}\right)  _{1\leqslant j\leqslant n}$ is a family of mutually
distinct points of $Z\backslash\left\{  a_{0}^{-},a_{0}^{+},a_{1}^{-}%
,a_{1}^{+},a_{2}^{-},a_{2}^{+}\right\}  $ and $\kappa_{\ell}=\left(
\kappa_{\ell,j}\right)  _{1\leqslant j\leqslant n}\in\mathbb{C}^{n}$ satisfies
$%
%TCIMACRO{\dsum \limits_{1\leqslant j\leqslant n}}%
%BeginExpansion
{\displaystyle\sum\limits_{1\leqslant j\leqslant n}}
%EndExpansion
\kappa_{\ell,j}=0$.

If $\left(  a,c,p,\kappa\right)  \in D_{Z,n}$, we denote by $V_{Z,\ell
}^{p,\kappa}$ a harmonic distribution with $\left\{  p_{\ell,j};~1\leqslant
j\leqslant n\right\}  $ as singular support and residue $\kappa_{\ell,j}$ at
$p_{\ell,j}$, $1\leqslant j\leqslant n$~;$\ V_{Z}^{p,\kappa}$ is unique up to
an additive constant~; we set $U_{Z,\ell}^{a,c,p,\kappa}=U_{Z,\ell}%
^{a,c}+V_{Z,\ell}^{p,\kappa}$, $0\leqslant\ell\leqslant2$. We denote by
$E_{Z,n}\left(  S\right)  $ the set of $\left(  a,c,p,\kappa\right)  \in
D_{Z,n}$ such that
\[
F_{Z}^{a,c,p,\kappa}=\left[  \partial U_{Z,0}^{a,c,p,\kappa}:\partial
U_{Z,1}^{a,c,p,\kappa}:\partial U_{Z,2}^{a,c,p,\kappa}\right]
\]
is well defined and injective outside some finite subset of $Z\backslash S$ ;
$E_{Z,n}\left(  S\right)  $ is open in $D_{Z,n}$.

Note that since $F_{Z}^{a,c,p,\kappa}$ takes the value $\left(  1,0,0\right)
$ at each pole of $\partial U_{Z,0}^{a,c,p,\kappa}$, the map $F_{Z}%
^{a,c,p,\kappa}$ can't be an embedding should it be well defined on the whole
of $Z$. It can't be neither an almost embedding since its degree would be at
least $2$. Hence, the result below which establishes the genericity assumption
claimed in~\cite[th.~1]{HeG-MiV2013} is somehow delicate. In this theorem, $S$
should be considered as a known subdomain where essentially the non
injectivity of the maps are confined and $\left(  p,\kappa\right)  $.

\begin{theorem}
[Almost embedding by dipole perturbation]\label{T/ compact generique}Let
$Z$\ be a compact Riemann surface and $S$ a non empty open subset of $Z$.
Consider $\left(  a,c\right)  $ in $D_{Z}$ with $a\in S^{6}$. Then for any
$\varepsilon\in\mathbb{R}_{+}^{\ast}$, there exists $n\in\mathbb{N}^{\ast}$
and $\left(  p,\kappa\right)  \in\left(  S^{n}\right)  ^{3}\times\left(
\mathbb{C}^{n}\right)  ^{3}$ such that $\left(  a,c,p,\kappa\right)  \in
E_{Z,n}\left(  S\right)  $ and $\left\vert \kappa\right\vert _{1}%
\overset{def}{=}%
%TCIMACRO{\dsum \limits_{0\leqslant\ell\leqslant2,\ 1\leqslant j\leqslant n}}%
%BeginExpansion
{\displaystyle\sum\limits_{0\leqslant\ell\leqslant2,\ 1\leqslant j\leqslant
n}}
%EndExpansion
\left\vert \kappa_{\ell,j}\right\vert \leqslant\varepsilon$.
\end{theorem}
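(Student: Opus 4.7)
The plan is to combine the projective almost-embedding theorem on a bordered piece of $Z$ (theorem~\ref{T AEB2}) with the quantitative harmonic Runge-type approximation (theorem~\ref{T/ Runge}) in order to transplant an almost embedding of a convenient bordered subsurface into the prescribed dipole-plus-perturbation form. First I pick a smoothly bordered, relatively compact open subdomain $S'\subset\subset S$ containing the six points $a_\ell^{\pm}$ and set $X:=Z\setminus\overline{S'}$, so that $X$ is an open bordered Riemann surface with $Z\setminus S\subset\overline{X}$, and the three forms $\omega_\ell:=\partial U_{Z,\ell}^{a,c}$ are smooth holomorphic $(1,0)$-forms on $\overline{X}$ (their singular supports being confined to $S'$).

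Applying theorem~\ref{T AEB2} to $(\omega_0,\omega_1,\omega_2)\in K(\overline{X})^3$, I obtain, for any $\delta>0$, a triple $\tilde\omega=(\tilde\omega_0,\tilde\omega_1,\tilde\omega_2)\in K(\overline{X})^3$ within $\delta$ of $\omega$ in the $C^\infty(\overline{X})$-topology such that $[\tilde\omega_0:\tilde\omega_1:\tilde\omega_2]$ is an almost embedding of $\overline{X}$ into $\mathbb{CP}_2$; the same theorem guarantees that the set of such almost embeddings is open in $K(\overline{X})^3$. The remaining and main step is to write the small holomorphic perturbations $\mu_\ell:=\tilde\omega_\ell-\omega_\ell$ on $\overline{X}$ in the form $\eta_\ell=\partial V_{Z,\ell}^{p_\ell,\kappa_\ell}$: meromorphic $(1,0)$-forms on $Z$ with simple poles at finitely many points $p_{\ell,j}\in S$ and residues $\kappa_{\ell,j}$ summing to zero. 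This is exactly what theorem~\ref{T/ Runge} delivers: its discretization of the Hodge--de~Rham decomposition produces such $\eta_\ell$ approximating $\mu_\ell$ on $\overline{X}$ to any prescribed accuracy $\delta'>0$ and simultaneously satisfying $|\kappa|_1\leq\varepsilon$. The joint control of the smooth approximation on $\overline{X}$ and of the $L^1$ norm of the residues is the main obstacle of the proof, since the qualitative Mittag-Leffler / Runge theorems on compact Riemann surfaces yield the approximation but not the $L^1$ bound on the residues.

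Once the $\eta_\ell$ are in hand, the form $\partial U_{Z,\ell}^{a,c,p,\kappa}=\omega_\ell+\eta_\ell$ is $\delta'$-close on $\overline{X}$ to the almost embedding $\tilde\omega_\ell$. Choosing $\delta'$ small enough and invoking the openness part of theorem~\ref{T AEB2}, the canonical map $F_Z^{a,c,p,\kappa}$ restricted to $\overline{X}$ is itself an almost embedding of $\overline{X}$ into $\mathbb{CP}_2$, hence it is well defined on $\overline{X}$ and injective there outside some finite set $E\subset X$. Since $Z\setminus S\subset\overline{X}$, the map $F_Z^{a,c,p,\kappa}$ is in particular well defined and injective on $Z\setminus S$ outside the finite set $E\cap(Z\setminus S)\subset Z\setminus S$, which together with $|\kappa|_1\leq\varepsilon$ gives the required $(a,c,p,\kappa)\in E_{Z,n}(S)$.
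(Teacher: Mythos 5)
There is a genuine gap at the crucial approximation step. You apply theorem~\ref{T AEB2} to the triple $(\omega_\ell)\in K(\overline{X})^3$ and obtain perturbed forms $\tilde\omega_\ell\in K(\overline{X})$; the resulting perturbation $\mu_\ell:=\tilde\omega_\ell-\omega_\ell$ is therefore just a small holomorphic $(1,0)$-form on $\overline{X}$, with no further structure. You then claim that theorem~\ref{T/ Runge} ``delivers'' a form $\eta_\ell=\partial V_{Z,\ell}^{p_\ell,\kappa_\ell}$ approximating $\mu_\ell$ on $\overline{X}$. But theorem~\ref{T/ Runge} approximates harmonic \emph{functions} $\varphi$ on $Z\setminus\overline{S}$ by functions $\varphi_\varepsilon$ with logarithmic poles; to use it you must first exhibit a harmonic function $\varphi_\ell$ on $\overline{X}$ with $\partial\varphi_\ell=\mu_\ell$. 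This requires $\mu_\ell$ to be exact, i.e.\ $\mu_\ell=dh_\ell$ for a single-valued holomorphic $h_\ell$. An arbitrary small holomorphic form on $\overline{X}=Z\setminus\overline{S'}$ has $2g$ independent period obstructions to exactness, so a generic approximant from theorem~\ref{T AEB2} does \emph{not} satisfy this, and your invocation of theorem~\ref{T/ Runge} is unjustified.

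The paper avoids this by approximating at the level of boundary data rather than forms: it invokes theorem~\ref{T AEB1}, producing a triple $(V_\ell^1)$ of harmonic \emph{functions} on $Z\setminus S$ near $(U_{Z,\ell}^{a,c})$; even more precisely, it uses the structure revealed by the \emph{proof} of theorem~\ref{P/ PerturbPlgmnt}, namely that $V_\ell^1-U_{Z,\ell}^{a,c}=\operatorname{Re}H_\ell$ with $H_\ell$ holomorphic near $Z\setminus S$, so the perturbation is automatically the $\partial$ of a harmonic function, and theorem~\ref{T/ Runge} applies to $\operatorname{Re}H_\ell$. If you replace your use of theorem~\ref{T AEB2} by theorem~\ref{T AEB1} (or by theorem~\ref{P/ PerturbPlgmnt} directly), the difference $V_\ell^1-U_{Z,\ell}^{a,c}|_{Z\setminus S}$ is a harmonic function to which theorem~\ref{T/ Runge} can legitimately be applied, and the rest of your argument --- including the clean use of the openness in $G_2^{ae}(X)$ in place of the paper's explicit Hausdorff-distance and argument-principle step --- goes through.
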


This theorem is proved by applying theorem~\ref{T AEB1} to $Z\backslash S$ and
theorem~\ref{T/ Runge} below . Note that this results and others of
\cite{BoAGaP1984, BaT1988} can been seen as a development or improvement of a
theorem of Runge-Behnke-Stein~\cite{BeHStK1949, FoOL1977} for holomorphic
functions on an open Riemann surface which is quoted by Remmert in his
book~\cite{ReR1998L} as follows\medskip

\noindent\textbf{Theorem (}Behnke-Stein). \textit{For every subdomain }$D$
\textit{of a noncompact Riemann surface }$Z$\textit{, there exists a set }$T$
\textit{of boundary points of }$D$ \textit{(in }$Z$\textit{) that it is at
most countable and has the following property: Every function in }%
$\mathcal{O}\left(  D\right)  $ \textit{can be approximated compactly in }$D$
\textit{by functions meromorphic in }$Z$\textit{\ that each have finitely many
poles, all which lie in}~$T$\textit{.}\smallskip

In the statement below, $L_{m}^{1}\left(  Z\backslash S\right)  $,
$m\in\mathbb{N}$, is the Sobolev space of distributions on $Z\backslash
\overline{S}$ whose total differentials up to order $m$ are integrable on
$Z\backslash S$, $Z$ being equipped with any hermitian metric.

\begin{theorem}
[Quantitative Runge-harmonic approximation]\label{T/ Runge}Let $Z$\ be a
compact connected oriented smooth Riemann surface and $S$ a smoothly bordered
open subset of $Z$. Then, there exists $\left(  C_{m}\right)  \in
\mathbb{R}_{+}^{\mathbb{N}}$ depending only of $S$ such that for every
$\varepsilon\in\mathbb{R}_{+}^{\ast}$ and $\varphi\in C^{\infty}\left(
Z\backslash S\right)  $ harmonic in $Z\backslash\overline{S}$, there is a
finite subset $P_{\varepsilon}$ of $S$ and a function $\varphi_{\varepsilon}$
harmonic in $Z\backslash P_{\varepsilon}$ with isolated logarithmic
singularities such that $\left\Vert \varphi-\varphi_{\varepsilon}\right\Vert
_{C^{m}\left(  Z\backslash S\right)  }\leqslant C_{m}\varepsilon~\left\Vert
\varphi\right\Vert _{_{L_{2}^{1}\left(  Z\backslash S\right)  }}$ for any
$m\in\mathbb{N}$ and $%
%TCIMACRO{\dsum \limits_{a\in P_{\varepsilon}}}%
%BeginExpansion
{\displaystyle\sum\limits_{a\in P_{\varepsilon}}}
%EndExpansion
\left\vert \operatorname*{Res}\left(  \varphi_{\varepsilon},a\right)
\right\vert \leqslant C_{0}\left\Vert \varphi\right\Vert _{_{L_{2}^{1}\left(
Z\backslash S\right)  }}$. If $\varphi$ is real valued, $\varphi_{\varepsilon
}$ can be chosen real valued.
\end{theorem}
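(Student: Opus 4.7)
The plan is to discretize the Hodge-de Rham representation of a smoothly extended cutoff of $\varphi$, turning it into a finite sum of Green's kernels centered at sample points in $S$; this automatically yields a harmonic function on $Z$ minus those points with residues summing to zero.

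\textbf{Setup.} Fix, depending only on $S$: an intermediate smoothly bordered open $S'\subset\subset S$; a cutoff $\chi\in C^\infty(Z)$ equal to $1$ on a neighborhood of $Z\setminus S$ and to $0$ on a neighborhood of $\overline{S'}$; a bounded Sobolev extension operator $E:L_2^1(Z\setminus S)\to L_2^1(U)$ into a neighborhood $U$ of $\overline{Z\setminus S}$, classical for the smoothly bordered domain $Z\setminus S$; and the normalized Green's kernel $G(z,w)$ of the Laplace-Beltrami operator $\Delta$ of $Z$, satisfying $\Delta_z G(\cdot,w)=\delta_w-V^{-1}$ and $\int_Z G(\cdot,w)\,dA=0$, with $V$ the total area. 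Given $\varphi$, set $\psi:=\chi\cdot E\varphi\in C^\infty(Z)$. Then $\psi\equiv\varphi$ on $Z\setminus S$ and $K:=\mathrm{supp}(\Delta\psi)\subset\overline{S}\setminus S'$. Expanding $\Delta(\chi E\varphi)$ by the Leibniz rule and using boundedness of $E$ gives
\[
\|\Delta\psi\|_{L^1(K)}\leq C_S\,\|\varphi\|_{L_2^1(Z\setminus S)},
\]
with $C_S$ depending only on $S$.

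\textbf{Representation and discretization.} The Hodge-de Rham representation, specialized to a scalar function on the closed surface, reads
\[
\psi(z)=\bar\psi+\int_Z G(z,w)\,\Delta\psi(w)\,dA(w),\quad z\in Z,
\]
where $\bar\psi$ is the mean of $\psi$. Given $\varepsilon>0$, partition $K$ into finitely many measurable cells $(C_j)_{j\in J}$ of diameter $\leq\delta:=\varepsilon$, pick $p_j\in C_j$, and set $\lambda_j:=\int_{C_j}\Delta\psi\,dA$. Define
\[
\varphi_\varepsilon(z):=\bar\psi+\sum_{j\in J}\lambda_j\,G(z,p_j).
\]
Since $\sum_j\lambda_j=\int_Z\Delta\psi=0$ by Stokes on a compact surface without boundary, the $V^{-1}$ contributions cancel, so $\Delta_z\varphi_\varepsilon=\sum_j\lambda_j\,\delta_{p_j}$. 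Hence $\varphi_\varepsilon$ is harmonic on $Z\setminus P_\varepsilon$ with $P_\varepsilon:=\{p_j:\lambda_j\neq 0\}\subset\overline{S}\setminus S'\subset S$, has only isolated logarithmic singularities there, and residues proportional to the $\lambda_j$ summing to $0$. Reality of $\varphi_\varepsilon$ when $\varphi$ is real is immediate, since $G$ can be taken real and then the $\lambda_j$ are real.

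\textbf{Estimates.} For $(z,w)\in(Z\setminus S)\times K$, $\mathrm{dist}(z,w)\geq\eta:=\mathrm{dist}(Z\setminus S,K)>0$, so $G$ and all its $z$-derivatives are uniformly $C^\infty$ on this product. A mean-value bound in $w$ furnishes constants $A_m$, depending only on $S$ and $m$, with $|D_z^m G(z,w)-D_z^m G(z,p_j)|\leq A_m\delta$ on each cell $C_j$; therefore
\[
\|\psi-\varphi_\varepsilon\|_{C^m(Z\setminus S)}\leq\sum_j\int_{C_j}|D_z^m(G(z,w)-G(z,p_j))|\,|\Delta\psi(w)|\,dA\leq A_m\,\delta\,\|\Delta\psi\|_{L^1(K)}\leq C_m\,\varepsilon\,\|\varphi\|_{L_2^1(Z\setminus S)},
\]
with $C_m:=A_m C_S$ depending only on $S$ and $m$. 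Because $\psi=\varphi$ on $Z\setminus S$, this is the required $C^m$ bound. For the residue sum,
\[
\sum_j|\lambda_j|\leq\sum_j\int_{C_j}|\Delta\psi|\,dA=\|\Delta\psi\|_{L^1(K)}\leq C_S\,\|\varphi\|_{L_2^1(Z\setminus S)},
\]
which yields $\sum_{a\in P_\varepsilon}|\mathrm{Res}(\varphi_\varepsilon,a)|\leq C_0\,\|\varphi\|_{L_2^1(Z\setminus S)}$ up to the fixed factor relating $\lambda_j$ to $\mathrm{Res}(\varphi_\varepsilon,p_j)$.

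\textbf{Main obstacle.} The critical quantitative ingredient is the bound $\|\Delta\psi\|_{L^1(K)}\leq C_S\|\varphi\|_{L_2^1(Z\setminus S)}$: it forces the use of a genuine Sobolev extension across $bS$ rather than a mere $C^\infty$ extension, so that derivatives of $\chi$ hitting $E\varphi$ remain integrable with constants depending only on $S$, and accounts for the presence of the $L_2^1$ norm in the statement. For the smoothly bordered $Z\setminus S$ this is classical. Once this is settled, the remainder is routine: the uniform smoothness of $G$ off the diagonal converts a first-order Riemann sum error into the claimed polynomial $C^m$-decay, and the vanishing $\sum\lambda_j=0$, coming from Stokes on a closed surface, automatically makes $\varphi_\varepsilon$ a harmonic distribution on the nodal surface obtained by identifying the points of $P_\varepsilon$.
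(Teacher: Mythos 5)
Your overall strategy is the same as the paper's: extend $\varphi$, write it via the Hodge--de Rham/Green representation, discretize the integral into a finite sum of Green kernels, and estimate the Riemann-sum error using the regularity of $G$ off the diagonal. But there is a genuine gap at the key estimate. With your construction, $\psi=\chi\cdot E\varphi$ where $E$ extends $\varphi$ from $Z\setminus S$ across $bS$; since $E\varphi$ is not harmonic on the $S$-side of $bS$ (one cannot solve the Cauchy problem for $\Delta$ with merely smooth boundary data), $\Delta\psi$ is in general nonzero arbitrarily close to $bS$, so $K=\operatorname{supp}(\Delta\psi)$ accumulates on $bS\subset Z\setminus S$. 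Hence your claim $\eta=\operatorname{dist}(Z\setminus S,K)>0$ is false, the set $(Z\setminus S)\times K$ meets the diagonal, $G$ and its $z$-derivatives are not uniformly bounded there, and the bound $\lvert D_z^m G(z,w)-D_z^m G(z,p_j)\rvert\leqslant A_m\delta$ collapses for $z$ near $bS$ and cells $C_j$ near $bS$. For the same reason the poles $p_j$ need not lie in $S$ (they can sit on or next to $bS$), whereas the statement --- and its use in theorem~\ref{T/ compact generique} --- requires $P_\varepsilon\subset S$.

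The missing ingredient is the paper's preliminary reduction via lemma~\ref{L/ RungefaibleLisse}: one first approximates $\varphi$ in $C^\infty(Z\setminus S)$ by the restriction of a function harmonic on $Z\setminus\overline{S'}$ with $S'\subset\subset S$, and only then extends from $Z\setminus S'$ to $Z$ (with the mean-zero extension operator of lemma~\ref{L/ Ext}). After this step $\Delta\widetilde{\varphi}$ is supported in $\overline{S'}$, which is at positive distance from $Z\setminus S$, and your Riemann-sum argument goes through verbatim: the kernel is uniformly $C^\infty$ on $(Z\setminus S)\times\overline{S'}$, the poles land in $S'\subset S$, and the residue sum is controlled by $\lVert\Delta\widetilde{\varphi}\rVert_{L^1}\leqslant C_S\lVert\varphi\rVert_{L_2^1(Z\backslash S)}$. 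The rest of your argument (vanishing of $\sum_j\lambda_j$ by Stokes, logarithmic singularities from lemma~\ref{L/ SingG}, real-valuedness) is correct and matches the paper.
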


\section{Proofs for the open bordered case\label{S 3}}

We first establish theorem~\ref{P/ Immersion}.

\begin{proof}
[\textbf{Proof of theorem~\ref{P/ Immersion}}]As an open bordered nodal curve
is an open bordered Riemann surface where a finite number of points have been
identified, it is sufficient to prove this proposition when $X$ is smooth.
Since $F^{-1}\left(  \delta\right)  =\gamma$, $\left.  F\right\vert _{X}$ is
proper and we know by a theorem of Remmert~\cite{ReR1956} that $Y$\ is an
analytic subset of $\mathbb{CP}_{2}\backslash\delta$~; $Y$ has pure dimension
$1$ because $F$ is an immersion. By reasoning with a connected component of
$X$, we reduce the proof to the case where $X$ is connected. Then, $Y$ is
connected and because $F$ is an immersion, $F$ has a degree $\nu$ over
$\operatorname*{Reg}Y$ which is defined by%
\[
\nu=\max\left\{  \operatorname{Card}\left(  X\cap F^{-1}\left(  \left\{
y\right\}  \right)  \right)  ~;~y\in\operatorname*{Reg}Y\right\}
\]
As $F\left\vert _{X}\right.  $ is proper, $F_{\ast}\left[  X\right]  $ is a
well defined and equals $\nu\left[  Y\right]  $. Hence $\nu d\left[  Y\right]
=dF_{\ast}\left[  X\right]  =F_{\ast}d\left[  X\right]  =F_{\ast}\left[
\gamma\right]  =\left[  \delta\right]  $ because $F\left\vert _{\gamma
}\right.  $ is injective. As $F$ is an immersion, $d\left[  Y\right]  $ is
locally of the form $\pm\left[  \delta\right]  $ and $\nu=1$.
\end{proof}

Lemma~\ref{L/ proj=can} below shows that theorem~A is a consequence of
theorem~\ref{T AEB2}.

\begin{lemma}
\label{L/ proj=can}Every holomorphic projective map of an open or open
bordered Riemann surface is canonical.
\end{lemma}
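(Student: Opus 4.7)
The plan is to show that a holomorphic map $f:Z\to\mathbb{CP}_{n}$ factors through a choice of $(n+1)$ holomorphic $(1,0)$-forms without common zero by using the vanishing of the Picard group of an open Riemann surface. Specifically, I would start by pulling back the hyperplane bundle: set $L=f^{\ast}\mathcal{O}_{\mathbb{CP}_{n}}(1)$ and $s_{\ell}=f^{\ast}X_{\ell}$ for $0\leqslant\ell\leqslant n$, where the $X_{\ell}$ are the tautological global sections of $\mathcal{O}(1)$. The $s_{\ell}$ are holomorphic sections of $L$ without common zero, and $f=[s_{0}:\cdots:s_{n}]$ in the sense that for any local trivialization of $L$, the $s_{\ell}$ become local holomorphic functions whose ratios reproduce $f$.

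Next I would invoke the classical fact that on a noncompact Riemann surface every holomorphic line bundle is trivial. This follows from the Behnke--Stein theorem, which gives $H^{1}(Z,\mathcal{O})=0$, together with $H^{2}(Z,\mathbb{Z})=0$ (since such $Z$ has the homotopy type of a one-dimensional CW complex), applied to the exponential sheaf sequence. Applying this to both $L$ and the canonical bundle $K_{Z}$, we pick a global holomorphic trivialization of $L$, which converts each $s_{\ell}$ into a genuine holomorphic function $\widetilde{s}_{\ell}$ on $Z$ with no common zero, and a global nowhere-vanishing holomorphic $(1,0)$-form $\tau$ on $Z$. Setting $\omega_{\ell}=\widetilde{s}_{\ell}\,\tau$ produces holomorphic $(1,0)$-forms on $Z$ without common zero whose ratios $\omega_{\ell}/\omega_{j}=\widetilde{s}_{\ell}/\widetilde{s}_{j}$ coincide with those coming from $f$, so $f=[\omega_{0}:\cdots:\omega_{n}]$ is canonical in the sense of formula~(\ref{F app}).

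For the open bordered case, one has to check that the $\omega_{\ell}$ can be chosen smooth up to $bZ$, so that they actually belong to $K(\overline{Z})$. This is handled by the standard observation that an open bordered Riemann surface $\overline{Z}$ is relatively compact in some slightly larger open Riemann surface $\widetilde{Z}$, and the map $f$, being holomorphic smooth up to the boundary, extends to a holomorphic map on a neighborhood of $\overline{Z}$ in $\widetilde{Z}$ (by a small extension across $bZ$ in local charts and patching, or simply by working with a fundamental system of open Riemann surfaces containing $\overline{Z}$). One then runs the previous argument on $\widetilde{Z}$, where $L$ and $K_{\widetilde{Z}}$ are again trivial, and restricts to $\overline{Z}$ to obtain forms in $K(\overline{Z})$.

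The only step needing real care is the second one: verifying the triviality of $L$ and $K_{Z}$ in the form needed. Once that is in hand, the construction of the $\omega_{\ell}$ is essentially bookkeeping. Everything else --- the pullback of sections, the preservation of the ``no common zero'' property, and the computation that the ratios agree with those defining $f$ --- is immediate from the definitions.
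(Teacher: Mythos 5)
Your main construction is, up to language, the same as the paper's: trivializing $L=f^{\ast}\mathcal{O}_{\mathbb{CP}_{n}}(1)$ over the covering $f^{-1}\left(  \left\{  t_{j}\neq0\right\}  \right)  $ is exactly solving the multiplicative Cousin problem with data $\zeta_{j,k}\circ f$ that the paper sets up, and multiplying the resulting functions by a nowhere-vanishing holomorphic $\left(  1,0\right)  $-form plays the same role as the paper's form $\alpha=\frac{1}{A}\alpha_{0}$ produced via the Weierstrass theorem. For an open, non-bordered surface your argument is complete and correct.

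The gap is in the bordered case. You assert that a holomorphic map $\overline{Z}\longrightarrow\mathbb{CP}_{n}$ smooth up to $bZ$ extends to a holomorphic map on a neighborhood of $\overline{Z}$ in a larger Riemann surface. This is false: already on the unit disc there exist functions holomorphic in the interior and $C^{\infty}$ on the closed disc whose natural boundary is the circle (lacunary series with rapidly decreasing coefficients), so no extension across $bZ$ exists even locally, and no patching can repair this. Hence the bordered case cannot be reduced to the open case by enlarging the surface. What is actually required is that the trivialization of $L$ (equivalently, the solution of the Cousin problem) be achievable with regularity up to the boundary, i.e.\ with trivializing sections holomorphic on the interior and smooth on $\overline{Z}$; the paper obtains this from Koppelman's $\overline{\partial}$-resolution with boundary regularity, and your abstract appeal to $H^{1}\left(  Z,\mathcal{O}\right)  =0$ does not by itself give any control at $bZ$. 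By contrast, the nowhere-vanishing canonical form causes no such difficulty: the paper does not extend anything from $Z$ but takes a form defined a priori on a neighborhood of $\overline{Z}$ in the double and corrects its divisor by a Weierstrass function, and your trivialization of the canonical bundle of such a neighborhood accomplishes the same thing.
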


\begin{proof}
Let $R$ be a Riemann surface as above and $F:\overline{R}\longrightarrow
\mathbb{CP}_{n}$ a holomorphic map, smooth up to the boundary if $R$ is
bordered. We assume that $R$ is connected without loss of generality. Let
$\left(  t_{0},...,t_{n}\right)  $ be the homogeneous coordinates of
$\mathbb{CP}_{n}$ and for $0\leqslant j\leqslant n$, $T_{j}=\left\{  t_{j}%
\neq0\right\}  $ and $\zeta_{j}=\left(  \zeta_{j,k}\right)  _{k\neq j}=\left(
t_{j}/t_{k}\right)  $ the natural affine coordinates for $\mathbb{CP}_{n}$ in
$T_{j}$~; we set $\zeta_{j,j}=1$. Then the functions $f_{j,k}=\zeta_{j,k}\circ
F$ are a data for a multiplicative Cousin problem on $Z$ associated to the
covering $\left(  R_{j}\right)  _{0\leqslant j\leqslant n}=\left(
F^{-1}\left(  T_{j}\right)  \right)  _{0\leqslant j\leqslant n}$. Original
proofs of that such a problem has always a solution on an open or bordered
Riemann surface can be found in the paper of Behnke-Stein~\cite{BeHStK1949}
for the first case and can be deduced from the results of Koppelman in
\cite{KoW1959} about $\overline{\partial}$-resolution with regularity up to
the boundary. So, we can find $\left(  f_{j}\right)  \in\mathcal{O}^{\ast
}\left(  R_{0}\right)  \times\cdots\times\mathcal{O}^{\ast}\left(
R_{n}\right)  $ such that $f_{j,k}=f_{j}/f_{k}$ on each $R_{j}\cap R_{k}$.
When $k$ is fixed, the relations $f_{k}=f_{j,k}f_{j}$ shows that $f_{k}$
extends holomorphically to $R$ and smoothly to $\overline{R}$. Hence, the
$f_{j,k}$ extends meromorphically on $R$ and the relations $f_{j,k}%
=f_{k}/f_{j}$ hold on $\overline{R}$. As a map on $\overline{R}$, $f=\left(
f_{k}\right)  $ have no zero and $\left[  f\right]  $ is well defined.
$F=\left[  f\right]  $ because on each $R_{j}$, $[\left(  f_{k}/f_{j}\right)
_{0\leqslant k\leqslant n}]=[\left(  f_{j,k}\right)  _{0\leqslant k\leqslant
n}]$.

Consider now $\alpha_{0}\in K\left(  \overline{R}\right)  $ not identically
zero~; if $R$ is bordered, we choose for $\alpha_{0}$ the restriction to
$\overline{R}$ of a non zero element of $K\left(  R^{\prime}\right)  $ where
$R^{\prime}$ is some open connected neighborhood of $\overline{R}$ in its
double. Then, using the Weierstrass theorem, we take $A\in\mathcal{O}\left(
R\right)  \cap C^{\infty}\left(  \overline{R}\right)  $ whose divisor is the
divisor of $\alpha_{0}$. The continuous extension $\alpha$ of $\frac{1}%
{A}\alpha_{0}$ is in $K\left(  \overline{R}\right)  $ and never vanishes.
Hence, $F=\left[  f\alpha\right]  $ is a canonical map.
\end{proof}

Subsequent statements and proofs are technically complicated by nodes'
existence but one can easily isolate the smooth case. Theorem~\ref{T AEB2} is
a particular case of theorem~\ref{T AEB3} and if along the lines below
establishing theorem~\ref{T AEB1}, one forgets that the $\left(  1,0\right)
$-forms appearing are of the image by $\partial$ of a harmonic distribution,
one gets a proof for theorem~\ref{T AEB3}, strictly speaking when $n=3$ but
the case $n\geqslant4$ is obtained the same way. The next two lemmas reduce
theorem's~\ref{T AEB1} proof to the case where hypothesis of
theorem~\ref{P/ PerturbPlgmnt} hold. Theorems~\ref{P/ PerturbPlgmnt} and
\ref{P/ Immersion} gives theorem~\ref{T AEB1} in the reduced case and hence in
general. We set now notations for the remainder of the section.

Let $X$ be an open bordered nodal curve. Since $X$ can be seen as a subdomain
of its double, we assume that $X$ is a relatively compact open subset of an
open nodal curve $\Sigma$ whose all components meet $X$ and such that
$\Sigma\backslash X\subset\operatorname*{Reg}\Sigma$ and $X\cap
\operatorname*{Reg}\Sigma$ is smoothly bordered in $\operatorname*{Reg}\Sigma
$. In the sequel $\Sigma$ is replaced by a sufficiently small open
neighborhood of $\overline{X}$ when needed.

Let $R$ be the smooth Riemann surface whose $\Sigma$ is a quotient and
$\pi:R\longrightarrow\Sigma$ the natural projection. We set $W=\pi^{-1}\left(
X\right)  $ and
\[
S=\pi^{-1}\left(  \operatorname*{Sing}\Sigma\right)  =\left\{  p_{j}%
~;~p\in\operatorname*{Sing}X~\&~1\leqslant j\leqslant\nu\left(  p\right)
\right\}
\]
where $\left\{  p_{1},...,p_{\nu\left(  p\right)  }\right\}  =\pi^{-1}\left(
p\right)  $ when $p\in\operatorname*{Sing}X$. As $\gamma\subset
\operatorname*{Reg}\Sigma$, we identify $\pi^{-1}\left(  \gamma\right)  $ and
$\gamma$.

Note that the pullback by $\pi$ of a harmonic distribution (resp. a weakly
holomorphic $\left(  1,0\right)  $-form) on $X$ is a harmonic function (resp.
a holomorphic $\left(  1,0\right)  $-form) on $W\backslash S$ with at most
isolated logarithmic singularities (resp. simple poles) and such that for any
$p\in\operatorname*{Sing}X$, the sum of its residues at $p$ vanishes.
Conversely such harmonic functions or holomorphic $\left(  1,0\right)  $-forms
on $W\backslash S$ have a well defined direct image by $\pi$ as a harmonic
distribution or weakly holomorphic $\left(  1,0\right)  $-form)~.

We fix a 3-uple $c=\left(  c_{\ell}\right)  _{0\leqslant\ell\leqslant2}$ of
admissible families and write $c_{\ell}=\left(  c_{p,j}^{\ell}\right)
_{p\in\operatorname*{Sing}X,~1\leqslant j\leqslant\nu\left(  p\right)  }$. In
the sequel $u=\left(  u_{\ell}\right)  _{0\leqslant\ell\leqslant2}$ where for
$0\leqslant\ell\leqslant2$, $u_{\ell}$ is the restriction to $bX$ of a
harmonic distribution $U_{\ell}$ on $\Sigma$ smooth near $\Sigma\backslash X$
with $c_{\ell}$ as family of residues~; we set $\omega=\left(  \omega_{\ell
}\right)  _{0\leqslant\ell\leqslant2}=\left(  \partial U_{\ell}\right)
_{0\leqslant\ell\leqslant2}$, $\theta=\left(  \theta_{\ell}\right)
_{0\leqslant\ell\leqslant2}=\left(  \pi^{\ast}\omega_{\ell}\right)
_{0\leqslant\ell\leqslant2}$ and for $0\leqslant\ell\leqslant2$,%
\[
S_{\ell}=\left\{  p\in S~;~\operatorname*{Res}\left(  \theta_{\ell},p\right)
\neq0\right\}  ~\text{and}~Z_{\ell}=\left\{  \theta_{\ell}=0\right\}  .
\]

If $p\in\operatorname*{Sing}X$ and $1\leqslant j\leqslant\nu\left(  p\right)
$, $p_{j}\in Z_{\ell}$ if and only if $c_{p,j}^{\ell}=0$ and $\theta_{\ell}$
is continuously prolonged by $0$ at $p$. When this occurs for any $\ell
\in\left\{  0,1,2\right\}  $, our data corresponds to a nodal curve where only
the points of $\pi^{-1}\left(  p\right)  \backslash\left\{  p_{j}\right\}  $
have been identified. Hence, we can assume from now for this section that the
following holds

\begin{center}
\textit{either }$c=0$, \textit{either }$c$\textit{\ is a nodal family. }
\end{center}

\noindent The first case means that we are working on the smooth Riemann
surfaces $W$ and $R$ with usual harmonic functions and holomorphic forms. In
the second case, $\left[  \theta\right]  $ is well defined on each $p_{j}$
where it takes the value $\left[  c_{p,j}^{0}:c_{p,j}^{1}:c_{p,j}^{2}\right]
$. When $c$ is a true nodal family, $\left[  \omega\right]  $ is univaluate at
the nodes of $X$ and can't be an embedding is $c$ is not injective.

Our approximation process needs the following lemma whose statement firstly
appears in~\cite{BiE1958} for the holomorphic case. Our proof uses
$\overline{\partial}$ and $\partial\overline{\partial}$ equation.

\begin{lemma}
\label{L/ RungefaibleLisse}Let $Z$ be a smoothly bordered relatively compact
open subset of an open Riemann surface $R$. Then, there exists an open
neighborhood $Z^{\prime}$ of $\overline{Z}$ in $R$ such that $\left(
Z,Z^{\prime}\right)  $ is a harmonic pair (resp. a Runge pair) which means
that any element of $C^{\infty}\left(  \overline{Z}\right)  $ (resp.
$C_{1,0}^{\infty}\left(  \overline{Z}\right)  $) harmonic (resp. holomorphic)
on $Z$ can be arbitrarily approximated in $C^{\infty}\left(  \overline
{Z}\right)  $ (resp. $C_{1,0}^{\infty}\left(  \overline{Z}\right)  $) by
harmonic functions (resp. holomorphic $\left(  1,0\right)  $-forms) on
$Z^{\prime}$.
\end{lemma}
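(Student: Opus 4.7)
The plan is to prove both cases of the lemma by an extend-and-correct scheme: extend the given boundary-smooth datum to a fixed open neighborhood $Z'$ of $\overline{Z}$ with an arbitrarily small ellipticity defect, then solve the corresponding elliptic equation on $Z'$ and apply interior regularity to bound the correction on $\overline{Z}$.

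\textbf{Setup.} Take $Z':=\{\rho<\varepsilon_{0}\}$ where $\rho$ is a smooth defining function of $bZ$ (negative on $Z$, $d\rho\neq 0$ on $bZ$) and $\varepsilon_{0}>0$ is small enough that $Z'$ is smoothly bordered and relatively compact in $R$. Each component of $Z'\setminus Z$ is a collar meeting $bZ'$, hence non-compact in $Z'$; so $(Z,Z')$ is a Runge pair in the Behnke-Stein sense, and $Z'$ deformation-retracts onto $\overline{Z}$.

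\textbf{Runge case.} Given $\omega\in C^{\infty}_{1,0}(\overline{Z})$ holomorphic on $Z$, $\varepsilon>0$ and $m\in\mathbb{N}$, I would first construct a smooth extension $\tilde\omega\in C^{\infty}_{1,0}(\overline{Z'})$ with $\tilde\omega\vert_{\overline{Z}}=\omega$ and $\|\bar\partial\tilde\omega\|_{C^{m}(Z')}$ arbitrarily small. This combines a Whitney-Borel extension (using that $\bar\partial\omega=0$ on $Z$ forces every normal-direction jet of $\bar\partial\tilde\omega$ at $bZ$ from inside to vanish) with a local Mergelyan approximation on boundary coordinate disks, patched via a partition of unity. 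Since $Z'$ is an open Riemann surface hence Stein, the equation $\bar\partial\alpha=\bar\partial\tilde\omega$ admits a canonical ($L^{2}$-minimal) solution on $Z'$ satisfying $\|\alpha\|_{L^{2}(Z')}\le C\,\|\bar\partial\tilde\omega\|_{L^{2}(Z')}$. Because $\bar\partial\alpha=0$ on $Z$, the form $\alpha$ is holomorphic on $Z$, and interior elliptic regularity for $\bar\partial$ on the compactly contained $\overline{Z}\subset\subset Z'$ gives
$$\|\alpha\|_{C^{m}(\overline{Z})}\;\le\;C_{m}\bigl(\|\alpha\|_{L^{2}(Z')}+\|\bar\partial\tilde\omega\|_{C^{m}(Z')}\bigr)\;<\;\varepsilon.$$
Setting $\omega_{\varepsilon}:=\tilde\omega-\alpha$ yields a holomorphic $(1,0)$-form on $Z'$ with $\|\omega_{\varepsilon}-\omega\|_{C^{m}(\overline{Z})}=\|\alpha\|_{C^{m}(\overline{Z})}<\varepsilon$.

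\textbf{Harmonic case.} I would run the analogous scheme with $\partial\bar\partial$ in place of $\bar\partial$: extend $u\in C^{\infty}(\overline{Z})$ to $\tilde u\in C^{\infty}(\overline{Z'})$ with $\|\partial\bar\partial\tilde u\|_{C^{m}(Z')}$ arbitrarily small (the local Mergelyan step is immediate because harmonic functions on a half-disk are real parts of holomorphic functions there). Then solve $\partial\bar\partial v=\partial\bar\partial\tilde u$ on $Z'$ through the Laplacian $\Delta=4\partial\bar\partial/\omega_{\mathrm{vol}}$, which is surjective (modulo constants) on smooth functions on the non-compact Riemann surface $Z'$; the $L^{2}$-Green function provides a canonical solution with $\|v\|_{L^{2}(Z')}\le C\,\|\partial\bar\partial\tilde u\|_{L^{2}(Z')}$. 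Interior elliptic regularity for the Laplacian bounds $\|v\|_{C^{m}(\overline{Z})}$ in terms of $\|\partial\bar\partial\tilde u\|_{C^{m}(Z')}$, and $u_{\varepsilon}:=\tilde u-v$ is then harmonic on $Z'$ and $C^{m}(\overline{Z})$-close to $u$. If $u$ was real, choosing the canonical solution yields real $v$, so $u_{\varepsilon}$ is real as well.

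\textbf{Main obstacle.} The delicate point is constructing the extension with globally small $\bar\partial\tilde\omega$ (resp.\ $\partial\bar\partial\tilde u$) on the fixed collar $Z'\setminus Z$: a bare Whitney-Borel extension guarantees only pointwise flatness at $bZ$, not a small $C^{m}$-norm on all of $Z'$. The resolution is the combination of that flat extension near $bZ$ with local Mergelyan on boundary charts (classical on disks in $\mathbb{C}$) patched by a partition of unity; writing $\tilde\omega=\sum_{j}\chi_{j}\tilde\omega_{j}$ with each $\tilde\omega_{j}$ a local near-holomorphic approximation, one computes $\bar\partial\tilde\omega=\sum_{j}(\bar\partial\chi_{j})\wedge(\tilde\omega_{j}-\omega)+\sum_{j}\chi_{j}\bar\partial\tilde\omega_{j}$, where the first sum is controlled by the local approximation quality thanks to $\sum_{j}\bar\partial\chi_{j}=0$, and the second is small by local near-holomorphicity. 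Combined with the $L^{2}$-estimate and interior elliptic regularity, this closes both arguments.
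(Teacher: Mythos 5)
Your route is genuinely different from the paper's: you propose a single-shot extend-and-correct scheme, whereas the paper builds the approximation by an inductive localize-and-glue argument — first establishing that $(D_j,D_j')$ are harmonic/Runge pairs via the classical one-variable Runge theorem on coordinate disks, then gluing two pairs at a time by solving a $\partial\overline{\partial}$ (resp.\ $\overline{\partial}$) equation on the union with Koppelman-type $C^m$ estimates. Your $\overline{\partial}$-correction and interior-regularity steps are sound, and in both cases they play the same role the paper's gluing correction $\varphi$ plays.

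However, there is a genuine gap in your extension step, which you yourself flag as the main obstacle but do not actually close. You need $\widetilde{\omega}\in C^\infty_{1,0}(\overline{Z'})$ with $\|\overline{\partial}\widetilde{\omega}\|_{C^m(Z')}$ arbitrarily small on the \emph{fixed} collar $Z'\setminus Z$. Your identity
\[
\overline{\partial}\widetilde{\omega}=\sum_j(\overline{\partial}\chi_j)\wedge(\widetilde{\omega}_j-\omega)+\sum_j\chi_j\,\overline{\partial}\widetilde{\omega}_j
\]
is only meaningful on $\overline{Z}$, since $\omega$ is not defined on the collar. Replacing $\omega$ there by an arbitrary smooth extension $\omega_0$ still gives $\sum_j(\overline{\partial}\chi_j)\wedge\widetilde{\omega}_j=\sum_j(\overline{\partial}\chi_j)\wedge(\widetilde{\omega}_j-\omega_0)$, but the local Mergelyan/Runge approximants $\widetilde{\omega}_j$ are controlled only on $\overline{Z\cap D_j}$ — nothing bounds $\widetilde{\omega}_j-\omega_0$ (or the cocycle $\widetilde{\omega}_j-\widetilde{\omega}_k$) on $D_j'\setminus\overline{Z}$, and $\operatorname{supp}\overline{\partial}\chi_j$ necessarily meets that region because the partition of unity must decay across the collar. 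So the first sum is not small where it matters. Whitney–Borel flatness at $bZ$ doesn't rescue this: it only makes $\overline{\partial}\widetilde{\omega}$ vanish to infinite order on $bZ$ itself, not small in $C^m$ across the whole collar for a fixed $Z'$. (Notice also that the existence of the extension you want is essentially equivalent to the lemma's conclusion — a holomorphic $\omega_\varepsilon$ on $Z'$ is itself such an extension — so any direct construction must contain the real work somewhere.) The paper avoids this by never constructing a global extension: at each stage it compares two already-constructed local approximations on their overlap and corrects the discrepancy there, so only quantities known to be small ever enter the $\partial\overline{\partial}$ right-hand side. To repair your argument you would need, in effect, to carry out that same overlap-correction induction inside your "main obstacle" paragraph.

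A secondary, more minor point: in the harmonic case, invoking an $L^2$-bounded solution operator for $\partial\overline{\partial}$ (or $\Delta$) on the open, relatively compact $Z'$ requires some care — the $L^2$-kernel of $\Delta$ on a bounded domain without boundary conditions is infinite-dimensional, so "the $L^2$-Green function provides a canonical solution with an $L^2$ estimate" needs justification beyond the non-compactness of $Z'$. The paper sidesteps this by appealing to Koppelman's $\overline{\partial}$-resolution with boundary regularity rather than a Green operator for $\Delta$.
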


\begin{proof}
First consider two harmonic pairs $\left(  A_{1},A_{1}^{\prime}\right)  $ and
$\left(  A_{2},A_{2}^{\prime}\right)  $. If $I^{\prime}=A_{1}^{\prime}\cap
A_{2}^{\prime}$ is empty, $\left(  A,A^{\prime}\right)  =\left(  A_{1}\cup
A_{2},A_{1}^{\prime}\cup A_{2}\right)  $ is a harmonic pair. Suppose
$I^{\prime}\neq\varnothing$, $U\in C^{\infty}\left(  \overline{C}\right)  $ is
harmonic in $C$, $U_{j}$ ($j=1,2$) is harmonic on $A_{j}^{\prime}$ and set
$V=U_{1}-U_{2}$ on $I^{\prime}$. Fix $\chi_{1}\in C^{\infty}\left(  A^{\prime
},\left[  0,1\right]  \right)  $ with support in $A_{1}^{\prime}$ such that
$\chi_{1}=1$ near $A_{1}^{\prime}\backslash A_{2}^{\prime}$ in $A^{\prime}$
and for $\left\{  j,k\right\}  =\left\{  1,2\right\}  $, let $V_{k}$ be the
null extension of $\left.  \left(  \chi_{j}V\right)  \right\vert _{I^{\prime}%
}$ to $A_{k}^{\prime}\backslash I^{\prime}$ where $\chi_{2}=1-\chi_{1}$. These
functions are smooth and because $V_{1}+V_{2}=V$ is harmonic on $I^{\prime}$,
we can define $\Phi\in C_{1,1}^{\infty}\left(  A^{\prime}\right)  $ by the
formulas $\left.  \Phi\right\vert _{A_{k}^{\prime}}=i\left(  -1\right)
^{k}\partial\overline{\partial}V_{j}$, $\left\{  j,k\right\}  =\left\{
1,2\right\}  $. According to e.g. the original work of
Koppelman~\cite{KoW1959} on $\overline{\partial}$ for Riemann surfaces, the
equation $\Phi=i\partial\overline{\partial}\varphi$ can be solved with
$\varphi\in C^{\infty}\left(  A^{\prime}\right)  $ such that for any integer
$m$%
\begin{equation}
\left\Vert \varphi\right\Vert _{C^{m}\left(  \overline{Z}\right)  }\leqslant
C_{m}\left\Vert \Phi\right\Vert _{C^{m}\left(  \overline{Z}\right)  }
\label{F/ controleL}%
\end{equation}
where $C_{m}$ depends only of $m$ and $A^{\prime}$. The function $U^{\prime}$
well defined on $A^{\prime}$ by $\left.  U^{\prime}\right\vert _{A_{j}%
^{\prime}}=U_{j}-\left(  -1\right)  ^{j}V_{j}-\left.  \varphi\right\vert
_{A_{j}^{\prime}}$, $j=1,2$, is harmonic. By construction $U^{\prime}-U$
equals $\chi_{1}\left(  U_{1}-U\right)  +\chi_{2}\left(  U_{2}-U\right)
-\varphi$ on $I^{\prime}$ and $U_{j}-U-\varphi$ on $A_{j}\backslash I^{\prime
}$, $j=1,2$. Thus, if $m$ is any integer,
\[
\left\Vert U^{\prime}-U\right\Vert _{C^{m}\left(  A\right)  }\leqslant
C_{m}^{\prime}\left(  \left\Vert U_{1}-U\right\Vert _{C^{m}\left(
A_{1}\right)  }+\left\Vert U_{2}-U\right\Vert _{C^{m}\left(  A_{2}\right)
}+\left\Vert \varphi\right\Vert _{C^{m}\left(  A\right)  }\right)
\]
where $C_{m}^{\prime}$ depends only of $\chi_{1}$. With~(\ref{F/ controleL}),
we get that $U^{\prime}$ is close to $U$ in $C^{\infty}\left(  \overline
{A}\right)  $ as wished provided that $\left(  U_{1},U_{2}\right)  $ is close
enough to $\left(  U,U\right)  $ in $C^{\infty}\left(  \overline{A_{1}}%
\times\overline{A_{2}}\right)  $. This proves that $\left(  A,A^{\prime
}\right)  $ is a harmonic pair.

Since $Z$ is smoothly bordered, $Z$ is covered by a finite family $\left(
D_{j}\right)  _{1\leqslant j\leqslant n}$ of open subsets of $R$ such that for
any $j$, $\overline{D_{j}}$ is contained in a coordinate patch of $R$ and
$Z_{j}=Z\cap D_{j}$ is simply connected. For each $j$, we then select an open
neighborhood $D_{j}^{\prime}$ of $\overline{Z_{j}}$ in $R$ and we set
$Z^{\prime}=\underset{1\leqslant j\leqslant n}{\cup}D_{j}^{\prime}$. Classical
Runge theorem (1885, see also \cite{BaT1988, BoAGaP1984, ReR1998L}) in
$\mathbb{C}$ implies that $\left(  D_{j},D_{j}^{\prime}\right)  $ are harmonic
pairs. Applying inductively what precedes to the pairs $(\underset{1\leqslant
j\leqslant k}{\cup}D_{j},\underset{1\leqslant j\leqslant k}{\cup}D_{j}%
^{\prime})$, we get that $\left(  Z,Z^{\prime}\right)  $ is a harmonic pair.

The same technique but with $\overline{\partial}$-resolution of $\left(
1,1\right)  $-forms on open Riemann surface gives the statement for $\left(
1,0\right)  $-forms.
\end{proof}

When $A$ and $A^{\prime}$ are open subsets of $\Sigma$ such that
$A\subset\subset A^{\prime}$, $\left(  A,A^{\prime}\right)  $ is called a
harmonic pair if for any harmonic distribution $U$ on $A$ smooth near $bA$ in
$\overline{A}$, there is a harmonic distribution $V$ on $A^{\prime}$ whose
singularities and residues on $A^{\prime}$ are those of $U$ on $A$ such that
$V-U$ is arbitrarily close to $0$ in $C^{\infty}\left(  \overline{A}\right)
$. Runge pairs are defined likewise. Our approximation process starts with the
following corollary of lemma~\ref{L/ RungefaibleLisse}.

\begin{lemma}
\label{L/ RungefaibleNodal}Let $\Sigma$ be an open nodal curve and $X$ a
relatively compact open smoothly bordered subset of $\Sigma$ such that
$\left(  bX\right)  \cap\operatorname{Sing}\Sigma=\varnothing$. Then, there
exists an open neighborhood $X^{\prime}$ of $\overline{X}$ in $\Sigma$ such
that $\left(  X,X^{\prime}\right)  $ is a harmonic pair and a Runge pair.
\end{lemma}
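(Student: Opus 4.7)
The plan is to reduce to the smooth case treated in Lemma~\ref{L/ RungefaibleLisse} by lifting to the normalization of $\Sigma$ and correcting for the logarithmic singularities at preimages of nodes by a cut-and-correct procedure. Let $\pi\colon R\to\Sigma$ be the normalization and $\widetilde{X}:=\pi^{-1}(X)$. Since $bX\cap\operatorname{Sing}\Sigma=\varnothing$, $\widetilde{X}$ is a smoothly bordered relatively compact open subset of the smooth Riemann surface $R$, so Lemma~\ref{L/ RungefaibleLisse} furnishes an open neighborhood $\widetilde{X}'\subset R$ of $\overline{\widetilde{X}}$ making $(\widetilde{X},\widetilde{X}')$ a harmonic pair and a Runge pair in the smooth sense. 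Since $\operatorname{Sing}\Sigma$ is discrete and every node lying in $\overline{X}$ has all of its preimages already in $\widetilde{X}$, I may shrink $\widetilde{X}'$ so that $\widetilde{X}'=\pi^{-1}(X')$ for $X':=\pi(\widetilde{X}')$ an open neighborhood of $\overline{X}$ in $\Sigma$ satisfying $X'\cap\operatorname{Sing}\Sigma=\overline{X}\cap\operatorname{Sing}\Sigma$.

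For the harmonic pair property, consider a harmonic distribution $U$ on $X$ smooth near $bX$. Its pullback $\widetilde{U}:=\pi^{\ast}U$ is harmonic on $\widetilde{X}\setminus S_X$, where $S_X:=\pi^{-1}(\operatorname{Sing}X)$, with a logarithmic singularity of residue $r_q$ at each $q\in S_X$, and smooth up to $b\widetilde{X}$. Pick disjoint coordinate disks $D_q\subset\widetilde{X}$ around the points $q\in S_X$ with holomorphic coordinates $z_q$, and cutoffs $\chi_q\in C_c^{\infty}(D_q)$ equal to $1$ near $q$, and set $L:=\sum_q r_q\chi_q\log|z_q|$. Then $\widetilde{V}:=\widetilde{U}-L$ extends to an element of $C^{\infty}(\overline{\widetilde{X}})$, and $i\partial\overline{\partial}\widetilde{V}=-\Phi$ where $\Phi$ is a smooth $(1,1)$-form compactly supported in the transition annuli of the cutoffs. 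Extending $\Phi$ by zero and using the standard Dolbeault/$\overline{\partial}$-resolution on the open Riemann surface $\widetilde{X}'$, I solve $i\partial\overline{\partial}\psi=-\Phi$ with $\psi$ smooth on $\widetilde{X}'$; then $\widetilde{V}-\psi$ is harmonic on $\widetilde{X}$ and smooth up to the boundary. By Lemma~\ref{L/ RungefaibleLisse} it can be approximated in $C^{\infty}(\overline{\widetilde{X}})$ by harmonic functions $F$ on $\widetilde{X}'$, so $\widetilde{V}':=F+\psi+L$ is harmonic on $\widetilde{X}'\setminus S_X$ with logarithmic singularities of residue $r_q$ at each $q\in S_X$ and approximates $\widetilde{U}$ arbitrarily well in $C^{\infty}(\overline{\widetilde{X}})$. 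The residue compatibility $\sum_j c_{p,j}=0$ at each node $p$ (built into $U$ being a harmonic distribution) permits $\widetilde{V}'$ to descend via $\pi_{\ast}$ to a harmonic distribution $V$ on $X'$ with the same singular support and residues as $U$ and $V-U$ arbitrarily small in $C^{\infty}(\overline{X})$.

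The Runge pair property is obtained in the same fashion applied to a weakly holomorphic $(1,0)$-form $\omega$ on $X$, smooth near $bX$: lift to $\widetilde{\omega}=\pi^{\ast}\omega$, subtract local models $L_\omega:=\sum_q r_q\chi_q\,dz_q/z_q$, solve $\overline{\partial}\beta=-\overline{\partial}L_\omega$ on $\widetilde{X}'$ for a smooth $(1,0)$-form $\beta$ via the standard $\overline{\partial}$-resolution, approximate the now-holomorphic $\widetilde{\omega}-L_\omega-\beta$ on $\widetilde{X}$ by holomorphic $(1,0)$-forms $\eta$ on $\widetilde{X}'$ from Lemma~\ref{L/ RungefaibleLisse}, and reassemble $\eta+\beta+L_\omega$ before pushing forward. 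Real-valuedness is preserved by taking real parts throughout. The main technical hurdle I anticipate is the coordinated choice of $\widetilde{X}'$ that is simultaneously a smooth harmonic/Runge pair neighborhood of $\overline{\widetilde{X}}$ and $\pi$-saturated, together with the existence of the auxiliary correction $\psi$ (resp. $\beta$) on $\widetilde{X}'$ without degrading the approximation; both are handled by the openness of $\widetilde{X}'$ in $R$ and the compact support of the correction data inside $\widetilde{X}$.
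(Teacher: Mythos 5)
Your argument is correct in substance, but it follows a genuinely different route from the paper's, at least for the harmonic half. The paper never desingularizes $U$: it works on a collar $Z\subset\operatorname*{Reg}\Sigma$ around $bX$ (which avoids the nodes by hypothesis), applies Lemma~\ref{L/ RungefaibleLisse} only there to produce $H$ harmonic on a collar neighborhood $Z^{\prime}$ and close to $U$ on $\overline{Z}$, then defines $V$ as the unique harmonic distribution on $X^{\prime}$ with boundary value $H\left\vert _{bX^{\prime}}\right.$ and the prescribed singularities and residues (the Dirichlet solvability of \cite[prop.~2]{HeG-MiV2013}); the maximum principle applied to the regular harmonic function $\pi^{\ast}\left(  V-U\right)$ then propagates the boundary smallness to all of $\overline{X}$. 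You instead lift to the normalization, peel off explicit local models $\sum_{q}r_{q}\chi_{q}\log\left\vert z_{q}\right\vert$ (resp. $\sum_{q}r_{q}\chi_{q}\,dz_{q}/z_{q}$), repair the cutoff error by a global $\partial\overline{\partial}$- (resp. $\overline{\partial}$-) resolution on $\widetilde{X}^{\prime}$, apply the smooth lemma to the now regular object, and reassemble; the correction $\psi$ (resp. $\beta$) cancels in the final difference, so the estimate is unaffected. Your scheme buys uniformity --- the same three-step reduction treats functions and forms, and it bypasses the Dirichlet problem for harmonic distributions on nodal curves; the paper's scheme buys economy for functions (no $\partial\overline{\partial}$-correction at all), while for forms it performs essentially your correction but organized as a gluing of the interior piece (kept equal to the original form, so the poles are never touched) with a collar approximant. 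Two details to tidy: with the paper's convention $\operatorname*{Res}\left(  \partial\left(  c\ln\left\vert z\right\vert \right)  \right)  =c/2$, the local model matching residue $r_{q}$ is $2r_{q}\log\left\vert z_{q}\right\vert$ (a harmless constant); and your saturation of $\widetilde{X}^{\prime}$ is legitimate precisely because $bX\cap\operatorname{Sing}\Sigma=\varnothing$ forces every node of $\overline{X}$ to have its full fibre inside $\widetilde{X}$, while the finitely many stray preimages of exterior nodes miss $\overline{\widetilde{X}}$ and can be excised, shrinking $\widetilde{X}^{\prime}$ clearly preserving the pair property.
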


\begin{proof}
Let $\Sigma^{\prime}$ and $Z$ smoothly relatively compact open neighborhoods
of $bX$ in $\operatorname*{Reg}\Sigma$ such that $Z\subset\subset
\Sigma^{\prime}$. Applying lemma~\ref{L/ RungefaibleLisse} to $Z\cap X$ and
$\Sigma^{\prime}$, we get an open neighborhood $Z^{\prime}$ of $Z$ in
$\Sigma^{\prime}$ such that $\left(  Z,Z^{\prime}\right)  $ is a harmonic and
Runge pair. We fix a neighborhood $X^{\prime}$ of $\overline{X}$ in
$X\cup\Sigma^{\prime}$ such that $X^{\prime}\cap\Sigma^{\prime}$ is a smooth
open subset of $\Sigma^{\prime}$.

Let $U$ be a harmonic distribution on $X$ smooth near $bX$ in $\overline{X}$,
$m$ an integer and $\varepsilon\in\mathbb{R}_{+}^{\ast}$. As $U$ is a smooth
function on $\overline{Z}$, we can find $H$ harmonic on $Z^{\prime}$ such that
$\left\Vert H-U\right\Vert _{C^{m}\left(  \overline{Z}\right)  }%
\leqslant\varepsilon$. According to e.g. \cite[prop.~2]{HeG-MiV2013}, there is
a unique harmonic distribution $V$ on $X^{\prime}$ whose restriction $v$ to
$bX^{\prime}$ is $w=H\left\vert _{bX^{\prime}}\right.  $ and which has the
same singularities and residues as $U$. Then $\pi^{\ast}\left(  V-U\right)  $
extends has a harmonic function on $W$ smooth on $\overline{W}$. So, the
maximum principle gives $\left\Vert H-U\right\Vert _{C^{m}\left(  \overline
{X}\right)  }\leqslant\varepsilon$.

Let now $\alpha$ be a (weakly) holomorphic (1,0)-form on $X$ smooth up to $bX$
near $bX$. We apply the same technique as in lemma's~\ref{L/ RungefaibleLisse}
proof with $A_{1}=Z$, $A_{1}^{\prime}=Z^{\prime}$, $A_{2} $ and $A_{2}%
^{\prime}$ open subsets of $X$ with complements in $X$ contained in $Z$ and
with smooth boundaries. We choose an approximation $\alpha_{1}^{\prime}\in
C_{1,0}^{\infty}\left(  Z^{\prime}\right)  $ of $\alpha_{1}=\alpha\left\vert
_{Z}\right.  $, $\chi\in C_{c}^{\infty}\left(  Z^{\prime}\right)  $ such that
$\chi_{1}=1$ on a neighborhood of $Z^{\prime}\backslash A_{2}^{\prime}$ in
$Z^{\prime}$ and consider the natural extension $\Phi=\left(  \overline
{\partial}\chi\right)  \wedge\left(  \alpha_{1}-\alpha\right)  $. Select
$\psi\in C_{1,0}^{\infty}\left(  R\right)  $ such that $\pi_{\ast}%
\Phi=\overline{\partial}\psi$ and $\left\Vert \psi\right\Vert _{C^{m}\left(
\pi^{-1}\left(  X\right)  \right)  }\leqslant C_{m}\left\Vert \Phi\right\Vert
_{C^{m}\left(  X\right)  }$. As $\operatorname*{Supp}\Phi\subset
\operatorname*{Reg}\Sigma$, $\varphi=\pi_{\ast}\psi$ is smooth on every branch
of $X$ and satisfies $\Phi=i\overline{\partial}\varphi$ on $\Sigma$. Hence the
form defined by $\left.  \alpha^{\prime}\right\vert _{A_{j}^{\prime}}%
=\alpha_{j}-\left(  -1\right)  ^{j}\alpha_{j}-\left.  \varphi\right\vert
_{A_{j}^{\prime}}$, $j=1,2$ has the same singularities and residues as
$\alpha$ and is arbitrarily close to $\alpha$ in $C_{1,0}^{\infty}\left(
\overline{X}\right)  $ provided $\alpha_{1}^{\prime}$ is arbitrarily close of
$\alpha_{1}$ in $C_{1,0}^{\infty}\left(  \overline{Z}\right)  $.
\end{proof}

Lemma~\ref{L/ RungefaibleNodal} gives a 3-uple $\left(  V_{\ell}\right)
_{0\leqslant\ell\leqslant2}$ of harmonic distributions near $\overline{X}$
such that $\left(  V_{\ell}-U_{\ell}\right)  _{0\leqslant\ell\leqslant2}$ is
arbitrarily small in $C^{\infty}\left(  \overline{X}\right)  ^{3}$. Thus, it
is sufficient to prove theorem~\ref{T AEB1} when $U$ is a 3-uple of harmonic
distributions on $\Sigma$ smooth near $\Sigma\backslash X$~; a likewise
reduction holds for theorem~\ref{T AEB3}. We have now to prove
theorem~\ref{P/ PerturbPlgmnt}. This done with lemmas~\ref{L/ 1ereAprox}
to~\ref{L/ plgmTot} which mainly rely on the fact that as $R$ is a Stein
manifold, Oka-Cartan techniques or $\overline{\partial}$-techniques\ involving
$L^{2}$-estimates with singular plurisubharmonic weights enables to prove the
following~:\smallskip

(1) If $z\in R$, for any holomorphic $\left(  1,0\right)  $-form $\Omega$ non
vanishing at $z$, there is $h\in\mathcal{O}\left(  R\right)  $ such that
$\partial_{\Omega}h\overset{def}{=}\frac{\partial h}{\Omega}$ is a coordinate
for $R$ near $z$.

2) If $z,z^{\prime}\in R$ and $z\neq z^{\prime}$, for any holomorphic $\left(
1,0\right)  $-form $\Omega$ non vanishing at $z$ and $z^{\prime}$, there is
$h\in\mathcal{O}\left(  R\right)  $ such that $\left(  \partial_{\Omega
}h\right)  \left(  z\right)  \neq\left(  \partial_{\Omega}h\right)  \left(
z^{\prime}\right)  $.\smallskip

We first use this tool in lemma~\ref{L/ 1ereAprox} to approximate $U_{0}$ and
$U_{1}$ by harmonic distributions on $\Sigma$ whose derivative induces a
canonical map.

\begin{lemma}
\label{L/ 1ereAprox}There exists harmonic distributions $U_{0}^{\prime}$ and
$U_{1}^{\prime}$ on $\Sigma$ such that for $U^{\prime}=\left(  U_{0}^{\prime
},U_{1}^{\prime},U_{2}\right)  $, $U^{\prime}-U$ is arbitrarily close to $0$
in $C^{\infty}\left(  \overline{X}\right)  ^{2}$, $0\notin\left(  \partial
U_{0}^{\prime}\right)  \left(  \gamma\right)  $ and $\left(  \partial
U_{0}^{\prime},\partial U_{1}^{\prime}\right)  $ (resp. $\partial U^{\prime}$)
induces a well defined canonical map on a neighborhood of $\operatorname*{Reg}%
\overline{X}$ (resp. $\overline{X}$) in $\Sigma$. Moreover, $U$ can also be
chosen real valued if $u$ is real valued.
\end{lemma}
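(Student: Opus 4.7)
The plan is to obtain $U_{0}'$ and $U_{1}'$ by adding to $U_{0},U_{1}$ the real parts of suitable globally defined holomorphic functions on $R$. Since $R$ is a (noncompact) Stein Riemann surface, the properties~(1) and~(2) recalled just above apply. The key observation is that for any $h\in\mathcal{O}(R)$, the function $2\operatorname{Re}(h)\circ\pi^{-1}$ is smooth and harmonic on $\operatorname*{Reg}\Sigma$ with no logarithmic singularity on any branch at any node of $\Sigma$ (because $\pi$ is a biholomorphism off $S$); it is therefore a harmonic distribution on $\Sigma$ with zero residues, and the Cauchy estimates on $R$ ensure that if $h$ is uniformly small on a neighbourhood of $\overline{W}$ in $R$, the induced perturbation is $C^{\infty}(\overline{X})$-small. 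Writing $U_{\ell}'=U_{\ell}+2\operatorname{Re}(h_{\ell})\circ\pi^{-1}$ for judiciously chosen $h_{0},h_{1}\in\mathcal{O}(R)$ therefore amounts, after lifting to $R$, to replacing $\theta_{\ell}$ by $\theta_{\ell}'=\theta_{\ell}+dh_{\ell}$, while the residue families $c_{0},c_{1}$ are preserved intact.

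The first step is to ensure that $\theta_{0}'$ vanishes nowhere on $\gamma$. By property~(1) and the compactness of $\overline{W}$, there exist finitely many $g_{1},\dots,g_{k}\in\mathcal{O}(R)$ whose differentials $dg_{1},\dots,dg_{k}$ have no common zero on $\overline{W}$. In a local trivialisation, write $\theta_{0}+\sum\lambda_{i}dg_{i}=\Phi(z,\lambda)\,dz$; wherever $\Phi$ vanishes, $\partial\Phi/\partial\lambda$ is surjective, so $\{\Phi=0\}$ is a smooth real submanifold of $\overline{W}\times\mathbb{C}^{k}$ of real codimension~$2$ in a neighbourhood of the zero locus of $\theta_{0}$. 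The projection of $\{\Phi=0\}\cap(\gamma\times\mathbb{C}^{k})$ to $\mathbb{C}^{k}$ then has real dimension at most $2k-1$, hence is of Lebesgue measure zero; together with Sard's theorem this yields an arbitrarily small generic $\lambda$ such that $\Phi(\cdot,\lambda)$ has no zero on $\gamma$ and only a finite set $E=\{z_{1},\dots,z_{m}\}\subset\overline{W}\setminus\gamma$ of simple zeros. Fixing such a $\lambda$ and setting $h_{0}=\sum\lambda_{i}g_{i}$, $\theta_{0}'=\theta_{0}+dh_{0}$, delivers the desired first property.

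The second step is to perturb $\theta_{1}$ to avoid the finite set $E$. Property~(1) gives, for each $z_{j}\in E$, an $\tilde{h}_{j}\in\mathcal{O}(R)$ with $d\tilde{h}_{j}(z_{j})\neq 0$; the set of $h_{1}=\sum\mu_{j}\tilde{h}_{j}$ for which $\theta_{1}(z_{j})+dh_{1}(z_{j})=0$ at some $j$ is a finite union of complex affine hyperplanes in $\mathbb{C}^{m}$, hence again of measure zero, so an arbitrarily small $h_{1}$ exists with $\theta_{1}':=\theta_{1}+dh_{1}$ non-vanishing on $E$. Then $(\theta_{0}',\theta_{1}')$ has no common zero on $\overline{W}\setminus S$, and consequently $[\partial U_{0}':\partial U_{1}']$ is well defined on a neighbourhood of $\operatorname*{Reg}\overline{X}$ while $\partial U_{0}'$ is nonzero on $\gamma$. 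At each $p_{j}\in S\cap\overline{W}$ (arising only in the nodal family case, the case $c=0$ reducing to the smooth situation $\Sigma=R$) the residue triple $(c_{p,j}^{0},c_{p,j}^{1},c_{p,j}^{2})=c_{p,j}$ is unchanged and nonzero, so $[\partial U']$ is automatically defined at the corresponding node; combined with the regular behaviour this shows that $[\partial U']$ is well defined on a neighbourhood of $\overline{X}$.

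The real case is immediate: $2\operatorname{Re}(h_{\ell})$ is real by construction, so $U_{\ell}'$ inherits realness from $U_{\ell}$. The main obstacle is the transversality argument of Step~1, which requires producing global holomorphic functions on $R$ whose differentials span $T^{\ast}R$ over $\overline{W}$; this is precisely what the Stein (Oka--Cartan) character of $R$ yields through property~(1), and once such functions are at hand, the Sard-type avoidance argument needed to push the zeros of $\theta_{0}'$ off $\gamma$ is routine.
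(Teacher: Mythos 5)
Your proposal is correct and takes essentially the same route as the paper: both perturb $\theta_{0},\theta_{1}$ by differentials $\partial h$ of global holomorphic functions supplied by the Stein property of $R$ (so the perturbation is the real part of a small holomorphic function, residues at the nodes are untouched, and well-definedness at $S$ follows from $c$ being nodal), with the perturbation chosen generically via a Sard/measure-zero argument. The only difference is bookkeeping — the paper removes common zeros of the pair and zeros of $\theta_{0}$ on $\gamma$ simultaneously by iterating over a finite cover of $\overline{W}$, while you first arrange $\theta_{0}'\neq 0$ on $\gamma$ with finite zero set $E$ and then arrange $\theta_{1}'\neq 0$ on $E$ — and both versions work.
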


\begin{proof}
As in lemma's~\ref{L/ proj=can} proof, we use an auxiliary form $\alpha\in
K\left(  R\right)  $ nowhere vanishing. Since $\overline{W}$ is compact, (2)
enables to find $\eta_{1},...,\eta_{m}\in\mathcal{O}\left(  R\right)  $ and a
covering $A_{1},...A_{m}$ of $\overline{W}$ by open subsets of $R$ such that
$0\notin\left(  \partial_{\alpha}\eta_{j}\right)  \left(  \overline{A_{j}%
}\right)  $, $1\leqslant j\leqslant m$. The Whiney-Sard lemma implies that
$\left(  \frac{\theta_{0}/\alpha}{\partial_{\alpha}\eta_{1}},\frac{\theta
_{1}/\alpha}{\partial_{\alpha}\eta_{1}}\right)  \left(  \overline{A_{1}%
}\backslash S_{\ell}\right)  $ and $\frac{\theta_{0}/\alpha}{\partial_{\alpha
}\eta_{1}}\left(  \gamma\cap A_{1}\right)  $ (which is a subset of
$\mathbb{C}$ since $S_{0}\cap\gamma=\varnothing$) have Lebesgue measure $0$.
Hence, for almost all $\varepsilon_{1}$ in $\mathbb{C}^{2}$, $\left(
\theta_{0}^{1},\theta_{1}^{1}\right)  =\left(  \theta_{\ell}-\varepsilon
_{1,\ell}\partial\eta_{1}\right)  _{0\leqslant\ell\leqslant1}$ and $\theta
_{0}^{1}$ never vanish on $\overline{A_{1}}$ and $\gamma\cap\overline{A_{1}}$
respectively. If $S\neq\varnothing$ and $p\in S$, $\theta_{\ell}^{1}$ and
$\theta_{\ell}$, $0\leqslant\ell\leqslant1$, have the same residue at $p$ and
as $c$ is nodal, $\left(  \theta_{0}^{1}:\theta_{1}^{1}:\theta_{2}^{1}\right)
$ is well defined at $p$.

Iterating this argument we get that for arbitrarily small $\varepsilon
_{1},...,\varepsilon_{m}$ in $\mathbb{C}^{2}$ with the property that
$\varepsilon_{j+1}$ is small enough with respect to $\varepsilon_{j}$,
$\left(  \theta_{0}^{m},\theta_{1}^{m}\right)  =\left(  \theta_{\ell
}+\varepsilon_{1,\ell}\partial\eta_{1}+\cdots+\varepsilon_{m,\ell}\partial
\eta_{m}\right)  $ induces a well defined canonical map near $\overline
{W}\backslash S$ and that $0\notin\theta_{0}^{m}\left(  \gamma\right)  $. As
$\theta_{\ell}^{m}=\partial\left(  U_{\ell}+\operatorname{Re}B_{\ell}\right)
$ where $B_{\ell}=\underset{1\leqslant j\leqslant r}{\Sigma}\varepsilon
_{j,\ell}\eta_{j}\in\mathcal{O}\left(  R\right)  $, $B\left\vert
_{\overline{W}}\right.  $ is arbitrarily small in $C^{\infty}\left(
\overline{W}\right)  $ and $\left(  U_{\ell}^{\prime}\right)  =\left(
U_{\ell}+\pi_{\ast}\operatorname{Re}B_{\ell}\right)  $ fulfills our demand.
\end{proof}

Applying lemma~\ref{L/ 1ereAprox} to $U=0$ in the smooth case and choosing for
$R$ a sufficiently small open neighborhood of $\overline{W}$, we get $\left(
\eta_{0},\eta_{1}\right)  \in\mathcal{O}\left(  R\right)  ^{2}$ such that
$\left(  \alpha_{0},\alpha_{1}\right)  =\left(  \partial\eta_{0},\partial
\eta_{1}\right)  $ never vanishes. This will be useful to keep the nature of
approximating forms in what follows.

In the general case, lemma~\ref{L/ 1ereAprox} enables to assume that $\left[
\theta\right]  $ is well defined and that $0\notin\theta_{0}\left(
\gamma\right)  $. Thus $\gamma\subset W_{0}=\overline{W}\backslash Z_{0}$ and
$\overline{W}$ is the disjoint union of $W_{0}$ and of the finite sets
$W_{1}=\left(  \overline{W}\cap Z_{0}\right)  \backslash Z_{1}$ and
$W_{2}=\overline{W}\cap Z_{0}\cap Z_{1}\subset\overline{W}\backslash Z_{2}$.
Note that at points of $W_{\ell}$, $\theta_{\ell}$ has a simple pole or is non zero.

The next lemmas follow the path of the Bishop's proof that a Stein manifold of
dimension $d$ can be embedded in $\mathbb{C}^{2d+1}$. Actually, as our goal is
much simpler than the results of Bishop~\cite{BiE1961}, we use the simplified
lecture of~\cite{HorL1965L}. Starting with $\omega$ we use (1) and (2) to find
sufficiently many $\theta_{j}$ so that $\left(  \theta_{0}:\cdots:\theta
_{n+1}\right)  $, essentially, embeds $\overline{W}\backslash S$ into
$\mathbb{CP}_{n+1}$. Then, we decrease $n$ as much as we can by a repeated use
of a Morse-Whitney-Sard lemma~\cite{MoM-VaG1934, WhH1936, SaA1942} which
implies that if $\varphi:M\longrightarrow M^{\prime}$ is a smooth map between
Riemannian manifolds and $\dim_{\mathbb{R}}M<\dim_{\mathbb{R}}M^{\prime}$,
$\varphi\left(  M\right)  $ has measure $0$.

\begin{lemma}
\label{L/ plgmntCPn}There exists $h_{3},...,h_{n+1}\in\mathcal{O}\left(
R\right)  $ such that $S\subset\underset{j\geqslant3}{\cap}\left\{  \partial
h_{j}\neq0\right\}  =\varnothing$ and $\left[  \partial h_{3}:\cdots:\partial
h_{n+1}\right]  $ is an embedding of $\overline{W}$. Moreover, with $\left(
\theta_{j}\right)  _{j\geqslant3}=\left(  \partial h_{j}\right)
_{j\geqslant3}$, $\left[  \theta\right]  =\left[  \theta_{0}:\cdots
:\theta_{n+1}\right]  $ is an almost embedding of $\overline{W}$ such that
$\left[  \theta\right]  \left(  \overline{W}\backslash S\right)  \cap\left[
\theta\right]  \left(  S\right)  =\varnothing$.
\end{lemma}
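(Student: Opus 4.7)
The plan is to follow Bishop's Stein embedding argument~\cite{BiE1961} in the simplified presentation of~\cite{HorL1965L}, using the two tools (1) and (2) just recalled. Since $R$ is an open (hence Stein) Riemann surface, these tools apply with the nowhere vanishing reference form $\alpha$, and their compactness-based iteration on $\overline{W}$ will produce the required $h_{3},\dots,h_{n+1}$.

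First I would secure well-definedness and the immersion property on $\overline{W}$, with special care at $S$. By~(1), for each $z\in\overline{W}$ one can select $h^{z}\in\mathcal{O}(R)$ such that $\partial_{\alpha}h^{z}$ is a local coordinate near $z$; in particular $(\partial h^{z})(z)\neq 0$. Covering the compact $\overline{W}$ by finitely many such coordinate neighborhoods yields functions $h^{(1)},\dots,h^{(N)}$ so that at every point of $\overline{W}$ at least one $\partial h^{(i)}$ is nonzero and some $\partial h^{(i)}$ is in fact a local coordinate. The projective map $[\partial h^{(1)}:\cdots:\partial h^{(N)}]:\overline{W}\to\mathbb{CP}_{N-1}$ is then a well-defined immersion, and in particular no point of $S$ lies in the common zero set of the $\partial h^{(i)}$.

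Next I would promote this immersion to an embedding by iterated use of~(2). The bad set
\[
D=\{(z,z')\in\overline{W}{}^{2}\setminus\Delta\,:\,[\partial h^{(\cdot)}](z)=[\partial h^{(\cdot)}](z')\}
\]
is closed; for any $(z,z')\in D$, tool~(2) yields $h^{z,z'}\in\mathcal{O}(R)$ with $(\partial_{\alpha}h^{z,z'})(z)\neq(\partial_{\alpha}h^{z,z'})(z')$, and a small adjustment via~(1) keeps $\partial h^{z,z'}$ nonzero at both points. Adjoining $h^{z,z'}$ strictly shrinks $D$ in an open neighborhood of $(z,z')$, and a compactness argument together with the analyticity of the equations defining $D$ ensures that finitely many such iterations suffice to bring $D$ to $\varnothing$. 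Renaming the final list $h_{3},\dots,h_{n+1}$, the resulting map $[\partial h_{3}:\cdots:\partial h_{n+1}]$ is an injective immersion of the compact $\overline{W}$, hence an embedding.

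To finish, I would verify that $[\theta]=[\theta_{0}:\cdots:\theta_{n+1}]$ is an almost embedding satisfying $[\theta](\overline{W}\setminus S)\cap[\theta](S)=\varnothing$. Since $[\partial h_{3}:\cdots:\partial h_{n+1}]$ already embeds $\overline{W}$, injectivity and the immersion property for $[\theta]$ hold automatically wherever the $\theta_{\ell}$ are holomorphic, while the finitely many residues at $S$ can at worst force an almost embedding. Near $p\in S$, dividing by a local form $\tau$ with the same simple pole as the $\theta_{\ell}$ gives $[\theta](p)=[c_{p,\cdot}^{0}:c_{p,\cdot}^{1}:c_{p,\cdot}^{2}:0:\cdots:0]$, lying in the hyperplane $\{\partial h_{j}/\tau=0\text{ for all }j\geqslant 3\}$, whereas for $q\in\overline{W}\setminus S$ the image $[\theta](q)$ has at least one nonzero coordinate of index $\geqslant 3$ because $[\partial h_{3}:\cdots:\partial h_{n+1}]$ is an embedding. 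A Morse--Whitney--Sard perturbation~\cite{MoM-VaG1934,WhH1936,SaA1942} of the $h_{j}$ in the Bishop style then rules out the measure-zero exceptional set where the two images could accidentally coincide. The main obstacle will be the joint bookkeeping at $S$: simultaneously separating preimages of a common node, preserving the embedding of $\overline{W}$, and maintaining the disjointness of $[\theta](S)$ from $[\theta](\overline{W}\setminus S)$; once one notices that the pole structure of the $\theta_{\ell}$ forces $[\theta](S)$ into a proper linear subspace of $\mathbb{CP}_{n+1}$, the disjointness reduces to a generic codimension condition compatible with the perturbations used to produce the embedding.
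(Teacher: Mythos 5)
Your overall plan (cover $\overline{W}$ via tool~(1), separate points via tool~(2), then pass to projective almost-embedding) is exactly the paper's, and your structural observation that $[\theta](S)$ lands in the coordinate subspace $\{t_{3}=\cdots=t_{n+1}=0\}$ while $[\theta](\overline{W}\setminus S)$ does not is the right way to see the disjointness. But there is a concrete gap at the nodes, and you name it yourself without resolving it.

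The lemma's conclusion (and the hypothesis of Lemma~\ref{L/ plgmntReduc} which will consume it) requires the \emph{strong} non-vanishing condition at $S$: every single $\partial h_{j}$, $j\geqslant 3$, must be nonzero at every point of $S$. Your argument only secures the much weaker ``no common zero on $S$'' (which is automatic from the immersion property). Moving from weak to strong is the substantive part of the paper's proof: it sets $T=\{q\in S:\ \partial\eta_{j}(q)\neq 0\ \forall j\geqslant 3\}$, picks $q_{\ast}\in S\setminus T$ with some $\partial\eta_{k}(q_{\ast})\neq 0$, and shows that for $a$ outside a measure-zero set, replacing $\partial\eta_{j-3}$ by $\partial\eta_{j-3}-a_{j-3}\partial\eta_{k}$ simultaneously (i) keeps all $\partial h_{j}$ nonzero on $T\cup\{q_{\ast}\}$, (ii) preserves the embedding of $\overline{W}$ (via Lemmas~\ref{L/ plgmntReduc}--\ref{L/ plgmTot} applied in the smooth case), and (iii) preserves the disjointness $[\theta](\overline{W}\setminus S)\cap[\theta](S)=\varnothing$ by a separate Morse--Whitney--Sard estimate on $(\partial\eta_{j}/\partial\eta_{k})(\{\partial\eta_{k}\neq 0\})$. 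Iterating enlarges $T$ to all of $S$ in finitely many steps. Your proposal gestures at ``a Morse--Whitney--Sard perturbation of the $h_{j}$'' and says the ``joint bookkeeping'' is the ``main obstacle,'' but never carries out this iteration nor explains why the three conditions can be preserved simultaneously at each step. Without that, you have neither the stated non-vanishing on $S$ nor, in full generality, the rank-1 statement at each $q\in S$ (which the paper checks via the expansion~(\ref{F/ pole})). So the proposal has the correct skeleton but is missing the mechanism that actually makes the nodal conditions hold.
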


\begin{proof}
As $\overline{W}$ is compact, (1) enables to find an open covering of
$\overline{W}$ by open subsets $V_{2},...,V_{m}$ of $R$, $\eta_{2},..,\eta
_{m}\in\mathcal{O}\left(  R\right)  $ and $\lambda\in\left\{  0,1\right\}
^{\left\{  2,...,m\right\}  }$ such that each $\frac{\alpha_{j}}%
{\alpha_{\lambda\left(  j\right)  }}$ where $\alpha_{j}=\partial\eta_{j}$ is a
coordinate for $R$ in $V_{j}$. Hence, $\Psi=\left(  \alpha_{0}:\cdots
:\alpha_{m}\right)  $ is an immersion of $\overline{W}$ such that $\Psi\left(
z\right)  =\Psi\left(  z^{\prime}\right)  $ implies $z=z^{\prime}$ when
$\left(  z,z^{\prime}\right)  \in V=\underset{3\leqslant j\leqslant m}{\cup
}V_{j}\times V_{j}$. As for any $z,z^{\prime}\in K=\left(  \overline{W}%
\times\overline{W}\right)  \backslash V$, at least one form in $\beta=\left\{
\alpha_{0},\alpha_{1},\alpha_{0}+\alpha_{1}\right\}  $ don't vanish both at
$z$ and $z^{\prime}$, (2) enables to find families of open subsets $\left(
V_{j}^{\prime}\right)  _{m+1\leqslant j\leqslant r}$ and $\left(
V_{j}^{\prime\prime}\right)  _{m+1\leqslant j\leqslant r}$ of $R$ for which
$\left(  W_{j}\right)  _{m+1\leqslant j\leqslant r}=\left(  V_{j}^{\prime
}\times V_{j}^{\prime\prime}\right)  _{m+1\leqslant j\leqslant r}$ is a
covering of the compactum $K=\left(  \overline{W}\times\overline{W}\right)
\backslash V$, $\eta_{m+1},..,\eta_{r}\in\mathcal{O}\left(  R\right)  $,
$\beta_{m+1},...,\beta_{r}\in\beta$ such that $(\partial_{\beta_{j}}\eta
_{j})\left(  z\right)  \neq(\partial_{\beta_{j}}\eta_{j})\left(  z^{\prime
}\right)  $ for any $\left(  z,z^{\prime}\right)  $ in $W_{j}$. By
construction $\left[  \partial\eta_{0}:\cdots:\partial\eta_{r+1}\right]  $
where $\eta_{r+1}=\eta_{0}+\eta_{1}$ is an injective immersion of
$\overline{W}$ and in the smooth case, the lemma is proved with $\left(
h_{j}\right)  _{3\leqslant j\leqslant n+1}=\left(  \eta_{j}\right)
_{0\leqslant j\leqslant r+1}$.

Lemmas~\ref{L/ plgmntReduc}~and~\ref{L/ plgmTot} applied in the smooth case
gives that for any $k\in\left\{  0,...,r+1\right\}  $ and almost all
$a\in\mathbb{C}^{r+2}$ with $a_{k}=0$, $\left[  \theta_{3}^{a}:\cdots
:\theta_{n+1}^{a}\right]  $ where $\left(  \theta_{j}^{a}\right)
_{j\geqslant3}=\left(  \partial\eta_{j-3}-a_{j-3}\partial\eta_{k}\right)
_{j\geqslant3}$ is an embedding of $\overline{W}$. Let $T=\underset{j\geqslant
3}{\cap}S\cap\left\{  \partial\eta_{j}\neq0\right\}  $. If $q_{\ast}\in
S\backslash T$ and $\partial\eta_{k}\left(  q_{\ast}\right)  \neq0$,
$\theta_{j}^{a}$ don't vanish in $T^{\prime}=T\cup\left\{  q_{\ast}\right\}  $
when $a\notin\underset{m\geqslant3,~q\in T^{\prime}}{\cup}\left\{
t\in\mathbb{C}^{r+2};~t=\partial\eta_{m}\left(  q\right)  /\partial\eta
_{k}\left(  q\right)  \right\}  $. For such an $a$, $\operatorname{Card}%
\underset{j\geqslant3}{\cup}S\cap\left\{  \theta_{j}^{a}=0\right\}
<\operatorname{Card}S\backslash T$ and $\theta^{a}=\left(  \theta_{0}%
,\theta_{1},\theta_{2},\theta_{3}^{a},...,\theta_{n+1}^{a}\right)  $ induces a
well defined canonical map which at $p\in S$ takes the value $\left[
c_{p}^{0}:c_{p}^{1}:c_{p}^{2}:0:\cdots:0\right]  $ where $c_{p}^{\ell
}=\operatorname*{Res}\left(  \theta_{\ell},p\right)  $, $0\leqslant
\ell\leqslant2$. Assume $z\in\overline{W}\backslash S$ and $p\in S$ share the
same image by $\left[  \theta^{a}\right]  $. Let $\ell\in\left\{
0,1,2\right\}  $ such that $c_{p}^{\ell}\neq0$. Then $\theta_{\ell}\left(
z\right)  \neq0$ and because $0\notin\left(  \partial\eta_{j}\right)
_{0\leqslant j\leqslant r+1}\left(  \overline{W}\right)  $, $\left[
\theta^{a}\right]  \left(  z\right)  =\left[  \theta^{a}\right]  \left(
p\right)  $ implies that $\left(  \partial_{\theta_{\ell}}\eta_{k}\right)
\left(  z\right)  \neq0$ and so $a$ belongs to $\left(  \frac{\partial\eta
_{j}}{\partial\eta_{k}}\right)  _{0\leqslant j\leqslant r+1}\left(  \left\{
\partial\eta_{k}\neq0\right\}  \right)  $ which by the Morse-Whitney-Sard
lemma has Lebesgue measure $0$. Repeating a finite number of time this
argument yields $h_{3},...,h_{n+1}\in\mathcal{O}\left(  R\right)  $ satisfying
the lemma's statement if $\left[  \theta\right]  $ where $\theta=\left(
\theta_{0},\theta_{1},\theta_{2},\partial h_{3},...,\partial\theta
_{n+1}\right)  $ is an immersion of $\overline{W}$. Consider $q\in S$, a
coordinate $\zeta$ for $R$ centered at $q$ and for $j\in\left\{
0,..,n+1\right\}  $, write $\theta_{j}=\pi^{\ast}\omega_{j}=(\frac{c_{q}^{j}%
}{\zeta}+f_{j}\circ\zeta)d\zeta$ where $c_{q}^{j}\in\mathbb{C}$ and $f_{j}$ is
holomorphic in a neighborhood of $0$~; as $c$ is nodal, $c_{q}^{\ell}\neq0$
for some $\ell\in\left\{  0,1,2\right\}  $. Then for $j\geqslant3$, near $q$,%
\begin{equation}
\frac{\theta_{j}}{\theta_{\ell}}=\frac{c_{q}^{j}}{c_{q}^{\ell}}+\frac{1}%
{c_{q}^{\ell}}\left[  f_{j}\left(  0\right)  -\frac{c_{q}^{j}}{c_{q}^{\ell}%
}f_{\ell}\left(  0\right)  \right]  \zeta+O\left(  \zeta\right)
\label{F/ pole}%
\end{equation}
and $\left(  \theta_{j}/\theta_{\ell}\right)  _{j\geqslant3}$ extends
holomorphically at $q$ with derivative $\left(  f_{j}\left(  0\right)
/c_{q}^{\ell}\right)  _{j\geqslant3}$. Hence, $\left[  \theta\right]  $ has
rank $1$ at $q$.
\end{proof}

\begin{lemma}
\label{L/ plgmntReduc}Consider $\left(  \omega_{\ell}\right)  _{0\leqslant
\ell\leqslant2}\in K_{3,c}\left(  \Sigma\right)  $ smooth near $\Sigma
\backslash X$, $n\geqslant2$, $\left(  \omega_{j}\right)  _{3\leqslant
j\leqslant n+1}\in K_{n-1,0}\left(  \Sigma\right)  $ and $\theta=\left(
\pi^{\ast}\omega_{j}\right)  _{0\leqslant j\leqslant n+1}$. Assume that
$\left[  \left(  \theta_{\ell}\right)  _{0\leqslant\ell\leqslant2}\right]  $
is well defined, $\left[  \theta\right]  $ is an embedding of $\overline
{W}\backslash S$ and an immersion of $\overline{W}$, $\left[  \theta\right]
\left(  \overline{W}\backslash S\right)  \cap\left[  \theta\right]  \left(
S\right)  =\varnothing$ and that $S\subset\underset{j\geqslant3}{\cap}\left\{
\theta_{j}\neq0\right\}  $. For $a\in\mathbb{C}^{n}$, set $\theta^{a}=\left(
\theta_{0},\theta_{1}-a_{1}\theta_{n+1},...,\theta_{n}-a_{n}\theta
_{n+1}\right)  $.

Then for almost all $a$, $S\subset\underset{j\geqslant3}{\cap}\left\{
\theta_{j}^{a}\neq0\right\}  $, $\left[  \theta^{a}\right]  \left(
W_{0}\backslash S\right)  \cap\left[  \theta^{a}\right]  \left(  S\right)
=\varnothing$, $\left[  \theta^{a}\right]  $ is an immersion of $W_{0}\cup S$
which embeds $\gamma$ and $\left(  \left[  \theta^{a}\right]  \left\vert
_{\overline{W}}\right.  \right)  ^{-1}\left(  \delta_{a}\right)  =\gamma$
where $\delta_{a}=\left[  \theta^{a}\right]  \left(  \gamma\right)  $. If
$n\geqslant3$, $\left[  \theta^{a}\right]  $ is also an embedding of
$W_{0}\backslash S$.
\end{lemma}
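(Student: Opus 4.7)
The plan is to realize $[\theta^a] = L_a \circ [\theta]$, where $L_a \colon \mathbb{CP}_{n+1} \backslash \{z_a\} \to \mathbb{CP}_n$ is the linear projection from the center $z_a = [0:a_1:\cdots:a_n:1]$. In this language conditions (i)--(v) translate into geometric positions of $z_a$ relative to the image, tangent lines and secant lines of $[\theta](\overline{W}) \subset \mathbb{CP}_{n+1}$, and each claim reduces to showing the corresponding bad subset of $\mathbb{C}^n \ni a$ has Lebesgue measure zero. The template is uniform: exhibit the bad set as the image of a real-analytic (in most cases holomorphic) map from a parameter manifold of sufficiently small real dimension, and invoke the Morse-Whitney-Sard lemma already used in lemma~\ref{L/ plgmntCPn}.

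Condition (i) is immediate: since $S$ is finite and $\theta_j(p), \theta_{n+1}(p) \neq 0$ on $S$ for $j\geq 3$, the equation $\theta_j^a(p) = 0$ cuts out finitely many affine hyperplanes $\{a_j = \theta_j(p)/\theta_{n+1}(p)\}$ in $\mathbb{C}^n$. For the geometric conditions I proceed as follows. Failure of (ii) means $z_a$ lies on a secant $\overline{[\theta](p)[\theta](q)}$ with $(p,q) \in S \times (W_0\backslash S)$, parametrizing a set of real dimension $\leq 2$. Failure of immersion of $[\theta^a]$ at $q \in W_0$ means $z_a \in T_{[\theta](q)}[\theta](\overline{W})$, which gives at most one bad $a$ per $q$ since for $q\in W_0$ the tangent line is not contained in $\{w_0=0\}$, hence real dimension $\leq 2$ after varying $q$. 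Failure of injectivity on $\gamma$ means $z_a$ lies on a secant with $(q,q')\in \gamma\times\gamma\backslash\Delta$, again real dimension $\leq 2$. Failure of (iv) uses pairs $(q,q')\in W\times\gamma$, real dimension $\leq 3$, still of measure zero because $2n\geq 4>3$ once $n\geq 2$. Failure of injectivity on $W_0\backslash S$ (needed for (v)) uses $(q,q')\in (W_0\backslash S)^2\backslash\Delta$, real dimension $\leq 4$, which is of measure zero only when $2n>4$, i.e.\ $n\geq 3$; this is where the extra hypothesis for (v) enters.

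The main obstacle I anticipate is handling the singular points $p\in S$, where three components of $\theta$ have simple poles while the remaining ones are holomorphic, so that both $[\theta^a](p_j)$ and the notion of tangent line at $[\theta](p_j)$ must be defined through a limit. I would fix $p\in S$ and work in a local coordinate $\zeta$ at $p_j\in\pi^{-1}(p)$, writing $\theta_\ell = (c_{p,j}^\ell/\zeta + f_\ell)d\zeta$ for $\ell\leq 2$ and $\theta_i = g_i\, d\zeta$ for $i\geq 3$. The normalized curve $\zeta\mapsto\zeta\theta^a(\zeta)/d\zeta$ in $\mathbb{C}^{n+1}$ then gives $[\theta^a](p_j)=[c_{p,j}^0:c_{p,j}^1:c_{p,j}^2:0:\cdots:0]$, independent of $a$, and immersion at $p_j$ reduces to rank $2$ of an explicit $2\times(n+1)$ matrix whose rank-deficient locus in $a$ is a proper analytic subset of $\mathbb{C}^n$, using $g_{n+1}(0)\neq 0$ (from $p_j\in\bigcap_{i\geq 3}\{\theta_i\neq 0\}$) together with the nodality of $c$. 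Once the local model at each node is in place, the tangent- and secant-line arguments of the previous paragraph extend to cover the contributions from $S$ as $0$-dimensional perturbations, and assembling all these measure-zero bad sets yields the claimed conclusions for almost every $a$.
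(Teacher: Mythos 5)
Your proposal is correct and takes essentially the same route as the paper's proof: both implement the Bishop--H\"{o}rmander generic-projection argument with the Morse--Whitney--Sard dimension count bounding the bad set of $a$, and both handle the nodes $S$ through a local expansion of $\theta_{j}/\theta_{\ell}$ at the branches $p_{j}$ --- your rank computation for the $2\times\left(n+1\right)$ matrix is exactly what formula~(\ref{F/ pole}) encodes. The paper parametrizes the bad loci via the explicit maps $T$ and $I$ of \cite{HorL1965L} in the affine chart $\tau_{j}=\theta_{j}/\theta_{0}$, whereas you describe them as the projection center $z_{a}=\left[0:a_{1}:\cdots:a_{n}:1\right]$ avoiding the tangent and secant varieties of $\left[\theta\right]\left(\overline{W}\right)$; these are equivalent formulations leading to the same thresholds $n\geqslant2$ and $n\geqslant3$.
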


\begin{proof}
Because $c$ is nodal $\theta^{a}$ is well defined at $p\in S$ and takes the
value $\left[  c_{p}^{0}:c_{p}^{1}:c_{p}^{2}:0:\cdots:0\right]  $ where
$c_{p}^{j}=\operatorname*{Res}\left(  \theta_{j},p\right)  $, $0\leqslant
j\leqslant n+1$. If $z\in R\backslash S$ and $\theta_{n+1}\left(  z\right)
=0$, $\theta_{a}$ is is well defined at $z$ whatever $a$ is. By the
Morse-Whitney-Sard lemma, $(\frac{\omega_{j}}{\omega_{n+1}})_{0\leqslant
j\leqslant n}\left(  \left\{  \omega_{n+1}\neq0\right\}  \right)  $ has
Lebesgue measure zero. Hence, $\left[  \theta^{a}\right]  $ is well defined
for almost $a\in\mathbb{C}^{n}$.

Set $\tau=\left(  \theta_{j}/\theta_{0}\right)  _{1\leqslant j\leqslant n+1} $
and $\widetilde{\tau}=\left(  \tau_{j}\right)  _{1\leqslant j\leqslant n}$. In
$W_{0}$, the natural affine coordinates of $\theta$ and $\theta^{a}$ are
$\tau$ and $\widetilde{\tau}-\tau_{n+1}a$ where if $p\in S_{0}$, $\tau\left(
p\right)  =\left(  c_{p}^{j}/c_{p}^{0}\right)  _{1\leqslant j\leqslant n+1}$.
The maps considered in \cite{BiE1961, HorL1965L} can be rewritten for Riemann
surfaces in the form%
\begin{align*}
T  &  :\mathbb{C}\times W_{0}\longrightarrow\mathbb{C}^{n+1},~\left(
t,z\right)  \mapsto t\left(  \partial_{\omega_{0}}\tau\right)  \left(
z\right) \\
I  &  :\mathbb{C}\times W_{0}\times W_{0}\longrightarrow\mathbb{C}%
^{n+1},~\left(  t,z,z^{\prime}\right)  \mapsto t\left[  \tau\left(  z\right)
-\tau\left(  z^{\prime}\right)  \right]  .
\end{align*}
$\left[  \theta^{a}\right]  $ is an immersion of $W_{0}^{r}=W_{0}\backslash S$
if and only if $\left(  a,1\right)  \notin T\left(  \mathbb{C}\times W_{0}%
^{r}\right)  $ and is injective on $W_{0}^{r}$ (resp. on $\gamma$) if and only
if $\left(  a,1\right)  \notin I\left(  \mathbb{C}\times W_{0}^{r}\times
W_{0}^{r}\right)  $ (resp. $\left(  a,1\right)  \notin I\left(  \mathbb{C}%
\times\gamma\times\gamma\right)  $). As explained in \cite{HorL1965L}, the
Morse-Whitney-Sard lemma and a homogeneity argument give that for almost all
$a\in\mathbb{C}^{n}$ the first and third above properties are true because
$n\geqslant2$ and that the second is true for almost all $a$ when
$n\geqslant3$.

Using notation of lemma~\ref{L/ plgmntCPn} to write the $\omega_{j}$ in a
coordinate $\zeta$ centered at $q\in S$, (\ref{F/ pole}) applies and gives
that for $j\in\left\{  0,1,2\right\}  \backslash\left\{  \ell\right\}  $%
\[
\frac{\partial\left(  \theta_{j}-\theta_{n+1}a_{j}\right)  /\theta_{\ell}%
}{\partial\zeta}\left(  q\right)  =\frac{1}{c_{q}^{\ell}}\left(  f_{j}\left(
0\right)  -\frac{c_{q}^{j}}{c_{q}^{\ell}}f_{\ell}\left(  0\right)
-f_{n+1}\left(  0\right)  a_{j}\right)
\]
As $\omega_{n+1}\left(  q\right)  \neq0$, it appears that $\left[  \theta
^{a}\right]  $ is regular at $q$ when $a$ don't belong to a finite union of
complex affine hyperplanes of $\mathbb{C}^{n}$.

$S\subset\underset{j\geqslant3}{\cap}\left\{  \theta_{j}^{a}\neq0\right\}  $
is achieved with the trick of lemma's~\ref{L/ plgmntCPn} proof.

If $z\in W_{0}^{r}$ and $p\in S$ share the same image by $\theta_{a}$, $p\in
S_{0}$, $\tau_{n+1}\left(  p\right)  =0$, $\widetilde{\tau}\left(  z\right)
-\tau_{n+1}\left(  z\right)  a=\widetilde{\tau}\left(  p\right)  $ and
$\tau_{n+1}\left(  z\right)  \neq0$ because $\left[  \theta\right]  \left(
z\right)  \neq\left[  \theta\right]  \left(  p\right)  $. Hence, $\left[
\theta^{a}\right]  \left(  W_{0}^{r}\right)  \cap\left[  \theta^{a}\right]
\left(  S\right)  =\varnothing$ if $\left(  a,1\right)  \not \in I\left(
\mathbb{C}\times W_{0}^{r}\times S\right)  $. For such an $a$, $\left(
\left[  \theta^{a}\right]  \left\vert _{\overline{W}}\right.  \right)
^{-1}\left(  \delta_{a}\right)  $ is $(\theta_{a}\left\vert _{\overline
{W_{0}^{r}}}\right.  )^{-1}\left(  \delta_{a}\right)  $ (because
$\gamma\subset W_{0}$) and equals $\gamma$ if and only if $\left(  a,1\right)
\notin I\left(  \mathbb{C}\times W_{0}^{r}\times\gamma\right)  $. Both
conditions are satisfied for almost all $a$ because $n\geqslant2$.
\end{proof}

\begin{lemma}
\label{L/ plgmTot}Hypothesis and notation are those of
lemma~\ref{L/ plgmntReduc}. Then for almost all $a\in\mathbb{C}^{n}$, the
following also hold~:

$\left[  \theta^{a}\right]  \left(  \overline{W}\backslash S\right)
\cap\left[  \theta^{a}\right]  \left(  S\right)  =\varnothing$, $\left[
\theta^{a}\right]  $ is an immersion of $\overline{W}$ which embeds $\gamma$.
If $n\geqslant3$, $\left[  \theta^{a}\right]  $ is an embedding of
$\overline{W}\backslash S$ which induces an embedding of $\overline{X}$ when
$c$ is injective.
\end{lemma}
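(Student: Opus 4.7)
The plan is to augment the almost-sure conclusions of lemma~\ref{L/ plgmntReduc} (which cover only $W_0\cup S$) with finitely many additional generic conditions on $a$ that take care of the finite set $W_1\cup W_2$. Recall $\overline{W}$ decomposes disjointly as $W_0\sqcup W_1\sqcup W_2$ with $W_1\cup W_2\subset(Z_0\cap\overline{W})\setminus Z_2$ a finite set (as the zero set of a nonzero holomorphic $(1,0)$-form on a Riemann surface is discrete). Since each remaining property is, on this finite set, an algebraic condition on $a\in\mathbb{C}^n$ --- either trivially satisfied or defining a proper affine subvariety of $\mathbb{C}^n$ --- almost every $a$ will simultaneously avoid all of them, and we can conjoin these with the full-measure set produced by lemma~\ref{L/ plgmntReduc}.

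I would first check well-definedness and the immersion property on $W_1\cup W_2$: at each $z\in W_1$ (respectively $z\in W_2$), $\theta_1(z)\neq 0$ (respectively $\theta_2(z)\neq 0$), which I use as the affine normalization; the conditions that $\theta^a(z)$ has all components zero or that the $\zeta$-derivative of the normalized map vanishes at $z$ each reduce to affine equations in $a$, and since $[\theta]$ itself was assumed to be an immersion of $\overline{W}$, non-immersion of $[\theta^a]$ at $z$ defines a proper affine subvariety in $a$. Next I would treat the non-collision $[\theta^a](\overline{W}\setminus S)\cap[\theta^a](S)=\varnothing$. The part over $W_0\setminus S$ is already in lemma~\ref{L/ plgmntReduc}. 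For $z\in W_1\cup W_2$ and $p\in S$, the first coordinate of $[\theta^a](z)$ is $\theta_0(z)=0$ while that of $[\theta^a](p)$ is the residue $c_p^0$; a collision therefore requires $c_p^0=0$, and then matching the remaining coordinates yields a system of affine equations on $a$ that contains at least one nontrivial equation by the hypothesis $S\subset\bigcap_{j\geqslant 3}\{\theta_j\neq 0\}$.

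When $n\geqslant 3$, to upgrade the embedding from $W_0\setminus S$ to $\overline{W}\setminus S$ I would add two finite families of conditions: points of $W_1\cup W_2$ cannot collide with points of $W_0\setminus S$ because of the incompatible vanishing behaviour of $\theta_0$, and collisions inside the finite set $W_1\cup W_2$ define finitely many affine conditions on $a$ which generic $a$ avoids. Finally, to descend to an embedding of $\overline{X}$ when $c$ is injective, one observes that at a node $p$ of $X$ with branches $p_1,\dots,p_{\nu(p)}\in S$, $[\theta^a]$ takes the common value $[c_{p,j}^0:c_{p,j}^1:c_{p,j}^2:0:\cdots:0]$ (the $j$-independence using that $c$ is a true nodal family), and distinct nodes give distinct values by injectivity of $c$; combined with the embedding on $\overline{W}\setminus S$ this factors through an embedding of $\overline{X}$. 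The main obstacle I anticipate is the bookkeeping of finitely many case splits; the unifying observation that simplifies everything is that $\theta_0$ has opposite behaviour on $S$ (pole) and on $W_1\cup W_2$ (zero), which eliminates the most dangerous collisions without any genericity on $a$.
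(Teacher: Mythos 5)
Your overall architecture is the same as the paper's: the only points not covered by lemma~\ref{L/ plgmntReduc} are those of the finite set $Z_{0}\cap\overline{W}=W_{1}\cup W_{2}$, collisions between $W_{0}\setminus S$ and $Z_{0}$ are excluded for free because the first homogeneous coordinate of $\left[\theta^{a}\right]$ vanishes on $Z_{0}$ and not on $W_{0}$, and every remaining bad event is a condition on $a$ supported on finitely many points. The immersion step and the descent to $\overline{X}$ are sound. The genuine gap lies in the two collision steps, exactly where one must check that the finitely many conditions imposed on $a$ are \emph{non-degenerate}, i.e.\ not satisfied by every $a\in\mathbb{C}^{n}$; this is where the standing hypotheses on $\left[\theta\right]$ must be used, and you either omit them or invoke an irrelevant one.

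Concretely, for a collision between $z\in Z_{0}\cap(\overline{W}\backslash S)$ and $p\in S$ with $c_{p}^{0}=0$, matching the coordinates $3\leqslant j\leqslant n$ gives the equations $\theta_{j}(z)-a_{j}\theta_{n+1}(z)=0$. The hypothesis $S\subset\bigcap_{j\geqslant3}\left\{\theta_{j}\neq0\right\}$ concerns the values of the $\theta_{j}$ at points of $S$ and says nothing about whether these equations are nontrivial in $a$: that depends on whether $\theta_{n+1}(z)\neq0$. If $\theta_{n+1}(z)=0$, the whole system is independent of $a$, and the only way to exclude a collision valid for \emph{every} $a$ is to note that it would force $\left[\theta\right](z)=\left[\theta\right](p)$, contradicting $\left[\theta\right](\overline{W}\backslash S)\cap\left[\theta\right](S)=\varnothing$ --- this is precisely the paper's step ``which forces $g_{n+1}(z)\neq0$'', after which the bad set is a finite union of affine lines $\frac{\widetilde{g}(z)}{g_{n+1}(z)}+\mathbb{C}b_{q}$. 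The same issue recurs for collisions inside $W_{1}\cup W_{2}$: the vanishing of the $2\times2$ minors of $\left(\theta^{a}(z),\theta^{a}(z^{\prime})\right)$ is indeed affine in $a$ (the quadratic terms cancel), but you must show these affine functions are not all identically zero, which again requires the injectivity of $\left[\theta\right]$ on $\overline{W}\backslash S$; the paper instead parametrizes the bad set by $\lambda\mapsto\left[\widetilde{g}(z)-\lambda\widetilde{g}(z^{\prime})\right]/\left[g_{n+1}(z)-\lambda g_{n+1}(z^{\prime})\right]$ and disposes of the degenerate value of $\lambda$ in the same way. Without these non-degeneracy checks the ``almost all $a$'' conclusion does not follow.
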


\begin{proof}
Consider a map $\left[  \theta^{a}\right]  $ obtained in
lemma~\ref{L/ plgmntReduc}, $\alpha\in K\left(  R\right)  $ never vanishing,
$g=\left(  \omega_{j}/\alpha\right)  _{0\leqslant j\leqslant n+1}$ and
$\widetilde{g}=\left(  g_{j}\right)  _{0\leqslant j\leqslant n}$.

Suppose $z\in\overline{W}\backslash S$ and $q\in S$ share the same image by
$\theta^{a}$. Then $z,q\in Z_{0}$ (because $\left[  \theta^{a}\right]  \left(
W_{0}\backslash S\right)  \cap\left[  \theta^{a}\right]  \left(  S\right)
=\varnothing$) and $\widetilde{g}\left(  z\right)  -ag_{n+1}\left(  z\right)
\in\mathbb{C}^{\ast}b_{q}$ where $b_{q}=(0:\operatorname*{Res}\left(
\theta_{1},q\right)  :\operatorname*{Res}\left(  \theta_{2},q\right)
:0:\cdots:0)$, a condition which forces $g_{n+1}\left(  z\right)  \neq0$ since
$\left[  \theta\right]  \left(  z\right)  \neq\left[  \theta\right]  \left(
q\right)  $. Hence, $\left[  \theta^{a}\right]  \left(  \overline{W}\backslash
S\right)  \cap\left[  \theta^{a}\right]  \left(  S\right)  =\varnothing$ when
$a$ isn't in the union of the affine lines $\frac{\widetilde{g}\left(
z\right)  }{g_{n+1}\left(  z\right)  }+\mathbb{C}b_{q}$ where $\left(
q,z\right)  $ takes all values in the finite set $S\times(\overline{W}\cap
Z_{0}\cap\left\{  \theta_{n+1}\neq0\right\}  )$.

If $\left[  \theta^{a}\right]  $ isn't of rank $1$ at a point of $W\cap
Z_{0}\backslash S$, there is $j,k\in\left\{  1,...,n\right\}  $ with $j\neq k$
such that\vspace*{-6pt}%
\[
\left[  g_{j}-a_{j}g_{n+1}\right]  d\left[  g_{k}-a_{k}g_{n+1}\right]
-\left[  g_{k}-a_{k}g_{n+1}\right]  d\left[  g_{j}-a_{j}g_{n+1}\right]
=0\vspace*{-6pt}%
\]
somewhere in $Z_{0}\backslash S$ so that $a\in\underset{z\in Z_{0}}{\cup
}\,\underset{j\neq k}{\cap}H_{z,j,k}$ with\vspace*{-6pt}
\[
H_{z,j,k}=\left\{  t\in\mathbb{C}^{n};~\alpha_{j,k}\left(  z\right)
-t_{j}\beta_{k}\left(  z\right)  -t_{k}\gamma_{j}\left(  z\right)  =0\right\}
\vspace*{-6pt}%
\]
where $\alpha_{j,k}=g_{j}\partial_{\Omega}g_{k}-g_{k}\partial_{\Omega}g_{j}$,
$\beta_{k}=g_{n+1}\partial_{\Omega}g_{k}-g_{k}\partial_{\Omega}g_{n+1}$ and
$\gamma_{j}=g_{j}\partial_{\Omega}g_{n+1}-g_{n+1}\partial_{\Omega}g_{j}$. If
$z\in\overline{W}\cap Z_{0}\backslash S$ is a zero of $\left(  \alpha
_{j,k},\beta_{k},\gamma_{j}\right)  $ for all $j,k\in\left\{  1,...,n\right\}
$ with $j\neq k$ then $\left(  g_{j}\partial_{\Omega}g_{k}-g_{k}%
\partial_{\Omega}g_{j}\right)  \left(  z\right)  =0$ for all $j,k\in\left\{
1,...,n+1\right\}  $ and $\left[  \theta\right]  $ has rank $0$ at $z$. As it
is not the case, $\underset{z\in Z_{0}}{\cup}\,\underset{j\neq k}{\cap
}H_{z,j,k}$ is a finite union of proper subspaces of $\mathbb{C}^{n}$ and
$\left[  \theta^{a}\right]  $ is regular on $\overline{W}\backslash S$ for
almost all $a$.

If $\left[  \theta^{a}\right]  \left(  z\right)  =\left[  \theta^{a}\right]
\left(  z^{\prime}\right)  $ with $z,z^{\prime}\in\overline{W}\backslash S$,
$z,z^{\prime}\in Z_{0}$ and there is $\lambda\in\mathbb{C}^{\ast}$ such that
$\widetilde{g}\left(  z\right)  -\lambda\widetilde{g}\left(  z^{\prime
}\right)  =\left[  g_{n+1}\left(  z\right)  -\lambda g_{n+1}\left(  z^{\prime
}\right)  \right]  a$. Then, either $g_{n+1}\left(  z\right)  =\lambda
g_{n+1}\left(  z^{\prime}\right)  $, $\left[  \theta\right]  \left(  z\right)
=\left[  \theta\right]  \left(  z^{\prime}\right)  $ and hence $z=z^{\prime}$,
either $g_{n+1}\left(  z\right)  \neq\lambda g_{n+1}\left(  z^{\prime}\right)
$ and $a$ belongs to the image $I_{z,z^{\prime}}$ of $\mathbb{C}%
\backslash\left\{  g_{n+1}\left(  z\right)  /g_{n+1}\left(  z^{\prime}\right)
\right\}  $ ($1/0=\infty$ by convention) by\vspace*{-6pt}
\[
H_{z,z^{\prime}}:\lambda\mapsto\left[  \widetilde{g}\left(  z\right)
-\lambda\widetilde{g}\left(  z^{\prime}\right)  \right]  /\left[
g_{n+1}\left(  z\right)  -\lambda g_{n+1}\left(  z^{\prime}\right)  \right]
\vspace*{-6pt}%
\]
When $\left(  z,z^{\prime}\right)  $ belongs to finite set $\Lambda=\left(
Z_{0}\cap\left\{  g_{n+1}\neq0\right\}  \backslash S\right)  ^{2}$ and
$H_{z,z^{\prime}}$ is not constant, $I_{z,z^{\prime}}$ is a holomorphic smooth
curve of $\mathbb{C}^{n}$. Hence $\underset{\left(  z,z^{\prime}\right)
\in\Lambda}{\cup}I_{z,z^{\prime}}$ is of Lebesgue measure $0$. Therefor,
$\left[  \theta^{a}\right]  $ is injective on $\overline{W}\backslash S$ for
almost all $a\in\mathbb{C}^{n}$ if $n\geqslant3$.
\end{proof}

\begin{proof}
[Proof of theorem~\ref{P/ PerturbPlgmnt}]For $a=(\left(  a_{\nu,\ell}\right)
_{0\leqslant\ell\leqslant n-\nu})_{1\leqslant\nu\leqslant n-2}$ in
$\mathbb{C}^{n}\times\mathbb{C}^{n-1}\times\cdots\times\mathbb{C}^{3}$, we set
$\left(  \omega_{0,\ell}\right)  _{1\leqslant\ell\leqslant n+1}=\left(
\omega_{\ell}\right)  _{1\leqslant\ell\leqslant n+1}$ and if $0\leqslant
\nu\leqslant n-2$%
\[
\left(  \omega_{\nu+1,\ell}\right)  _{1\leqslant\ell\leqslant n-\nu}=\left(
\omega_{\nu,\ell}-a_{\nu,\ell}\omega_{\nu,n-\nu+1}\right)  _{1\leqslant
\ell\leqslant n-\nu}.
\]
Lemmas~\ref{L/ 1ereAprox} through \ref{L/ plgmTot} give that $\sigma=\left(
\omega_{0}:\omega_{n-1,1}:\omega_{n-1,2}\right)  $ is an immersion of
$\overline{W}$ which embeds $\gamma$ and satisfies $\left(  \sigma\left\vert
_{\overline{X}}\right.  \right)  ^{-1}\left(  \sigma\left(  \gamma\right)
\right)  =\gamma$. By construction, $\sigma=\left(  \omega_{0}:\omega
_{1}+K_{1}:\omega_{2}+K_{2}\right)  $ where $K_{\ell}=\underset{j\geqslant
3}{\Sigma}b_{\ell,j}\omega_{j}$, each $b_{\ell,j}$ being a universal
polynomial in the coordinates of $a$. Hence $K_{\ell}\left\vert _{\gamma
}\right.  $ can be chosen arbitrarily small in $C^{\infty}\left(  \overline
{X}\right)  $. Moreover, we have $K_{\ell}=\underset{j\geqslant3}{\Sigma
}b_{\ell,j}dh_{j}=dk_{\ell}$ with $k_{\ell}=\underset{j\geqslant3}{\Sigma
}b_{\ell,j}h_{j}$. As $K_{\ell}$ is holomorphic, the function $R_{\ell
}=2\operatorname{Re}k_{\ell}$ is harmonic, is arbitrarily small on $C^{\infty
}\left(  \overline{X}\right)  $ and satisfies $\partial R_{\ell}=K_{\ell}$.
Then $\left(  V_{\ell}\right)  _{_{0\leqslant\ell\leqslant2}}=\left(  U_{\ell
}+R_{\ell}\right)  _{0\leqslant\ell\leqslant2}$ has the expected properties.
\end{proof}

\begin{proof}
[Proof of theorem~\ref{T AEB3}]Applied to a not necessarily exact initial
3-uple $\left(  \omega_{\ell}\right)  _{0\leqslant\ell\leqslant2}$ of forms,
the above lemmas ensure that for arbitrarily small $a$, $\left(  \omega
_{0}:\omega_{n-2,1}:\omega_{n-2,2}:\omega_{n-2,3}\right)  $ an embedding of
$\overline{X}$ when $c$ is injective and that $\sigma$ is an almost embedding
of $\overline{X}$ . Since $\omega_{\nu,\ell}$ has the same singularities and
residues as $\omega_{\ell}$ for any $\ell$, this proves theorem~\ref{T AEB3}
when $\omega$ is smooth near $\Sigma\backslash X$ and hence in the general
case according to preceding reductions.
\end{proof}

\noindent\textbf{Remark. }When $X$ is an open bordered nodal Riemann surface
and $c$ an admissible family, we defined a 4-DN-datum as a 3-uple $\left(
\gamma,u,\theta u\right)  $ where $\gamma=\partial X$ is the oriented boundary
of $X$, $u=\left(  u_{\ell}\right)  _{0\leqslant\ell\leqslant3}\in C^{\infty
}\left(  \gamma\right)  ^{4}$, $\theta u=\left(  \partial\widetilde{u}_{\ell
}^{c_{\ell}}\left\vert _{\gamma}\right.  \right)  _{0\leqslant\ell\leqslant3}$
and $\left[  \left(  \partial\widetilde{u}_{\ell}^{c_{\ell}}\right)  \right]
$ embeds $\overline{X}$ in $\mathbb{CP}_{3}$. Then, a by product of
theorem's~\ref{T AEB2} proof is that $\left\{  u\in C^{\infty}\left(
\gamma\right)  ^{4},~\left(  \gamma,u,\theta u\right)  ~\text{is~a~4-DN-data}%
\right\}  $ is a dense open set of $C^{\infty}\left(  \gamma,\mathbb{R}%
\right)  ^{4}$.

\section{Proofs for the compact case}

Let $Z$ a compact Riemann surface equipped with a K\"{a}hler form $\omega$
such that $\int_{Z}\omega=1$. Let $\ast$ be the usual Hodge operator on forms
and $\delta=-\ast d\ast$ the adjoint of the unbounded operator $d:L_{p,q}%
^{2}\left(  Z\right)  \longrightarrow L_{p,q+1}^{2}\left(  Z\right)  $ where
$L_{r,s}^{2}\left(  Z\right)  $ denotes the space of $\left(  r,s\right)
$-forms with coefficients in $L^{2}$. We denote by $\Delta=\delta d+d\delta$
the Laplace Beltrami operator and by $G$ a Green function for it, that is a
smooth real valued function defined on $Z\times Z$ without its diagonal such
that for very $z\in Z$, the function $G_{z}=G\left(  z,.\right)  $ satisfies
in the sense of currents
\begin{equation}
\Delta G_{z}=\delta_{z}-1.~ \label{F/ lap}%
\end{equation}
where $\delta_{z}$ is the Dirac measure. It is classical (see e.g.
\cite{AuT1998L}) that $G$ is symmetric, $\left\vert G\left(  z,\zeta\right)
\right\vert =O\left(  \ln dist\left(  z,\zeta\right)  \right)  $ for distances
associated to hermitian metrics on $Z$ and that $z\mapsto\int_{Z}G_{z}\omega$
is constant~; as $G$ is unique up to an additive constant, we choose the one
for which this constant is $0$.

As $Z$ is compact, harmonic functions on $Z$ are constant and the
Hodge-De~Rham orthogonal decomposition (see \cite{HoWL1952} or \cite{AuT1998L}%
) of $\varphi\in L^{2}\left(  Z\right)  $ takes the form%
\begin{equation}
\varphi=\mathcal{H}\varphi+\Delta\mathcal{G}\varphi=\mathcal{H}\varphi
+\mathcal{G}\Delta\varphi\label{F/ Hodge}%
\end{equation}
where $\mathcal{H}\varphi=\int_{Z}\varphi\omega$ and $\mathcal{G}\varphi$ is
the function $Z\ni z\mapsto\int_{Z}\varphi G_{z}\omega$.

\begin{lemma}
\label{L/ SingG}Let $z$ be a point of $Z$ and $w$ a coordinate for $Z$
centered at $z$, then $G_{z}-\frac{1}{\pi}\ln\left\vert w\right\vert $ extends
as a smooth function near $z$ and the residue of $G_{z}$ at $z$, that is
$\underset{\varepsilon\longrightarrow0^{+}}{\lim}\frac{1}{2\pi i}%
\int_{\left\vert w\right\vert =\varepsilon}\partial G_{z}$, is equal to $1$.
\end{lemma}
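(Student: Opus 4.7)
The plan is to reduce the claim to a purely local computation near $z$ and then invoke interior elliptic regularity; the residue then falls out of a one-line contour integration.

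First I would pass to a holomorphic coordinate $w$ centered at $z$, in which the K\"ahler form reads $\omega = g \cdot \frac{i}{2}\, dw \wedge d\bar{w}$ for some positive smooth function $g$, and the Laplace--Beltrami operator $\Delta$ acts on functions as a scalar multiple of $g^{-1}\partial_{w}\partial_{\bar{w}}$. The Cauchy--Pompeiu distributional identity $\partial_{\bar{w}}(1/w) = \pi\, \delta_{0}$ (equivalently, the Poincar\'e--Lelong formula applied to the coordinate $w$) makes $\Delta(c\log|w|)$ completely explicit; choosing the constant $c$ so that the resulting Dirac mass has coefficient $1$ yields, in a neighborhood of $z$,
\[
\Delta(c \log|w|) = \delta_{z} + R,
\]
where $R$ is smooth and $c$ is the prefactor displayed in the statement.

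Next, set $H := G_{z} - c\log|w|$ on a punctured neighborhood of $z$. Combining the defining equation $\Delta G_{z} = \delta_{z} - 1$ with the previous step gives $\Delta H = -1 - R$, which is smooth through $z$. Since $\Delta$ is a second-order elliptic operator on the Riemann surface $Z$, classical interior elliptic regularity forces $H$ to extend as a $C^{\infty}$ function in a full neighborhood of $z$, establishing the first assertion.

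For the residue, with $G_{z} = c\log|w| + H$ near $z$ and $\partial \log|w| = \tfrac{dw}{2w}$, we decompose
\[
\frac{1}{2\pi i}\int_{|w|=\varepsilon}\partial G_{z} \;=\; \frac{c}{4\pi i}\int_{|w|=\varepsilon}\frac{dw}{w} \;+\; \frac{1}{2\pi i}\int_{|w|=\varepsilon}\partial H.
\]
The second term tends to $0$ as $\varepsilon \to 0^{+}$ by smoothness of $H$, while the first is constantly equal to $c/2$ for every $\varepsilon > 0$ by the elementary residue theorem. The asserted value then follows once $c$ is fixed. The main technical obstacle is the book-keeping of constants in step one: tracking the sign and factor conventions of $\Delta = \delta d + d\delta$ with $\delta = -{*}d{*}$, of $d^{c} = i(\bar{\partial}-\partial)$, and of the pairing of currents against the volume form $\omega$. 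Once this normalization is fixed, steps two and three are routine.
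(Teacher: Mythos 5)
Your skeleton is exactly the paper's: the published proof is the one-line observation that $\Delta\left(G_{z}-\frac{1}{\pi}\ln\left\vert w\right\vert\right)=-1$ near $z$ together with the ellipticity of $\Delta$, and your elliptic-regularity step is a correct expansion of that. (One available simplification: for a conformal metric in one complex variable, $\Delta\left(c\ln\left\vert w\right\vert\right)$ is a pure multiple of the Dirac mass with no smooth remainder $R$, because the conformal factor appearing in $\Delta$ cancels against the one in the volume form; carrying the harmless $R$ costs nothing, though.)

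The gap is in the second half. The lemma asserts two specific constants --- the logarithmic coefficient $\frac{1}{\pi}$ and the residue $1$ --- and your proposal establishes neither: it ends with ``the asserted value then follows once $c$ is fixed,'' which is precisely the content that needs proving. Worse, your own (correct) contour computation shows the two constants cannot both hold: from $\partial\ln\left\vert w\right\vert=\frac{dw}{2w}$ the residue equals $c/2$, so $c=\frac{1}{\pi}$ forces the residue to be $\frac{1}{2\pi}$, while a residue of $1$ forces $c=2$. No choice of normalization of $\Delta$, of $\omega$, or of the Dirac mass can reconcile this, because the relation ``residue $=c/2$'' is independent of those choices; the ambiguities you list as ``the main technical obstacle'' only affect $c$ itself, not the ratio between $c$ and the residue. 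Had you carried out the book-keeping you deferred (using the paper's footnote $dd^{c}f=-\left(\Delta f\right)\omega$ and the normalization $\int_{Z}\omega=1$), you would have pinned down $c$ and then been obliged to flag that the stated residue value does not follow from the stated coefficient. As written, the proposal proves the qualitative statement (smooth extension after subtracting a suitable multiple of $\ln\left\vert w\right\vert$, with residue equal to half that multiple) but not the quantitative one; the quantitative claim, as printed, appears to contain a normalization inconsistency, and a complete proof must either compute the correct constants or say so explicitly rather than hide behind ``once $c$ is fixed.''
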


\begin{proof}
It is a plain consequence of the ellipticity of $\Delta$ and of that
$\Delta\left(  G_{z}-\frac{1}{\pi}\ln\left\vert w\right\vert \right)  =-1$ in
a neighborhood of $z$.
\end{proof}

We also need the following lemma which is a minor adaptation of classical
results (see \cite{StE1970L}) about $L_{m}^{p}\left(  Z\backslash S\right)  $,
$\left(  p,m\right)  \in\left[  1,\infty\right]  \times\mathbb{N}$, which is
the Sobolev space of distributions on $Z\backslash\overline{S}$ whose total
differentials up to order $m$ are in $L^{p}\left(  Z\backslash S\right)  $,
$Z$ being equipped with any hermitian metric.

\begin{lemma}
\label{L/ Ext}Let $S$ be a smooth open subset of $Z$. Then there exists an
extension operator $E:C^{\infty}\left(  Z\backslash S\right)  \longrightarrow
C^{\infty}\left(  Z\right)  $ which is continuous from $L_{m}^{p}\left(
Z\backslash S\right)  $ to $L_{m}^{p}\left(  Z\right)  $ for any $\left(
p,m\right)  \in\left[  1,\infty\right]  \times\mathbb{N}$, sends $C^{\infty
}\left(  Z\backslash S,\mathbb{R}\right)  $ to $C^{\infty}\left(  Z\backslash
S,\mathbb{R}\right)  $ and such that $\mathcal{H}\circ E=0$.
\end{lemma}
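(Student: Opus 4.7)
The plan is to build $E$ in two stages: first construct any continuous Sobolev extension operator $E_0$ ignoring the harmonic constraint, then correct it by a fixed smooth bump in $S$ so that $\mathcal{H}\circ E=0$.

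For the first stage, I would import the Stein extension operator from $\mathbb{R}^n$ to the Riemann surface $Z$ by a routine coordinate-and-partition-of-unity argument. Choose a finite atlas $(\Omega_j,\kappa_j)_{1\leqslant j\leqslant N}$ of $Z$ in which each boundary patch $\Omega_j$ meeting $bS$ is mapped by $\kappa_j$ to a rectangle in which $\kappa_j(\Omega_j\cap(Z\setminus S))$ is the upper half-rectangle and $\kappa_j(\Omega_j\cap bS)$ is the horizontal axis; include also interior charts $\Omega_j\subset Z\setminus\overline S$ and $\Omega_j\subset S$. Pick a subordinate smooth real partition of unity $(\rho_j)$. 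On each boundary chart apply Stein's reflection-type extension operator (Chapter~VI of~\cite{StE1970L}), which is continuous $L^p_m\to L^p_m$ for every $(p,m)\in[1,\infty]\times\mathbb{N}$; on interior charts inside $Z\setminus\overline S$ the operator is the identity, and on charts inside $S$ the operator is $0$. Summing after multiplication by $\rho_j$ yields $E_0:C^\infty(Z\setminus S)\to C^\infty(Z)$ which is linear, continuous from $L^p_m(Z\setminus S)$ to $L^p_m(Z)$ for every $(p,m)$, sends real functions to real functions (all ingredients being real), and satisfies $(E_0\varphi)|_{Z\setminus S}=\varphi$.

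For the second stage, fix once and for all a function $\chi\in C^\infty_c(S,\mathbb{R})$ with $\int_Z\chi\,\omega=1$ (possible because $S$ is a non-empty open subset), and set
\[
E\varphi=E_0\varphi-\Bigl(\int_Z E_0\varphi\cdot\omega\Bigr)\chi.
\]
Since $\chi$ vanishes identically on $Z\setminus S$, we still have $(E\varphi)|_{Z\setminus S}=\varphi$, so $E$ is an extension. The correction $\varphi\mapsto(\int_Z E_0\varphi\cdot\omega)\chi$ is the composition of the continuous linear functional $\varphi\mapsto\int_Z E_0\varphi\cdot\omega$ (continuous from $L^p_m(Z\setminus S)$ to $\mathbb{R}$ because $E_0$ is continuous into $L^1(Z)$ and $\omega$ is smooth) with multiplication by the fixed smooth $\chi$, which is bounded from $\mathbb{R}$ into $L^p_m(Z)$ for every $(p,m)$. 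Hence $E$ inherits from $E_0$ the continuity $L^p_m(Z\setminus S)\to L^p_m(Z)$. It also preserves real-valuedness because both $E_0$ and $\chi$ do. Finally,
\[
\mathcal{H}(E\varphi)=\int_Z E\varphi\cdot\omega=\int_Z E_0\varphi\cdot\omega-\Bigl(\int_Z E_0\varphi\cdot\omega\Bigr)\int_Z\chi\,\omega=0.
\]

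There is essentially no hard step: the real content is Stein's classical extension theorem, and both the transfer to the manifold and the normalization against $\mathcal{H}$ are elementary. The only point to watch is that the same operator $E_0$ must be continuous simultaneously in all $L^p_m$ scales, which is exactly the feature of the Stein reflection operator (as opposed to the easier Seeley-type constructions that require a family of operators), so I would reference this property explicitly rather than reproving it.
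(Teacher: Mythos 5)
Your proposal is correct and follows essentially the same route as the paper: import Stein's extension operator (the paper likewise cites \cite{StE1970L} and notes it acts on all $L_m^p$ scales simultaneously), then kill the mean by subtracting $\bigl(\int_Z E_0\varphi\,\omega\bigr)\chi$ for a fixed real bump $\chi$ supported in $S$ with $\int_Z\chi\,\omega=1$. The paper's only cosmetic difference is an extra cutoff $\chi_1$ equal to $1$ near $Z\backslash S$ multiplying $E_0\varphi$, which your partition-of-unity construction of $E_0$ renders unnecessary.
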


\begin{proof}
From~\cite{StE1970L}, we get an extension operator $E_{0}$ sending (real
valued) functions on $Z\backslash S$ to (real valued) functions on $Z$ which
is continuous from $L_{m}^{p}\left(  Z\backslash\overline{S}\right)  $ to
$L_{m}^{p}\left(  Z\backslash\overline{S}\right)  $ for any $\left(
p,m\right)  \in\left[  1,\infty\right]  \times\mathbb{N}$. Choose open subsets
$U_{1},V_{1}$ of $Z$ and $\chi_{1},\chi_{2}\in C^{\infty}\left(  Z,\left[
0,1\right]  \right)  $ such that $Z\backslash S\subset\operatorname*{Supp}%
\chi_{1}\subset U_{1}\subset\subset V_{1}$, $\chi_{1}\left\vert _{U_{1}%
}\right.  =1$, $\operatorname*{Supp}\chi_{2}\subset V_{1}\backslash
\overline{U_{1}}$ and $\int_{Z}\chi_{2}\omega=1$. Then, the extension operator
$E$ defined by$\vspace*{-6pt}$
\[
\forall f\in C^{\infty}\left(  Z\backslash S\right)  ,~Ef=\chi_{1}E_{0}%
f-\chi_{2}\int_{Z}\left(  E_{0}f\right)  \chi_{1}\omega\vspace*{-6pt}%
\]
has the same continuousness as $E_{0}$ and for and $f\in C^{\infty}\left(
Z\backslash S\right)  $,$\vspace*{-6pt}$%
\[
\mathcal{H}\left(  E\varphi\right)  =\int_{Z}\chi_{1}E_{0}f\omega-\left(
\int_{Z}\chi_{2}\omega\right)  \int_{Z}\left(  E_{0}f\right)  \omega
=0\vspace*{-12pt}.
\]

\end{proof}

We can now prove theorems~\ref{T/ Runge} and~\ref{T/ compact generique}.

\begin{proof}
[\textbf{Proof of theorem}~\ref{T/ Runge}]Lemma~\ref{L/ RungefaibleLisse}
enable to reduce the proof for functions which are restrictions on
$Z\backslash S$ of harmonic functions on $Z\backslash\overline{S^{\prime}}$
where $S^{\prime}$ is relatively compact smoothly bordered open subset
$S^{\prime}$ of $S$. Let $\varphi:Z\longrightarrow\mathbb{R}$ be harmonic in
$Z\backslash S$ and $\widetilde{\varphi}=E\varphi$ where $E:C^{\infty}\left(
Z\backslash S^{\prime}\right)  \longrightarrow C^{\infty}\left(  Z\right)  $
is an extension operator as in lemma~\ref{L/ Ext}. Consider a family $\left(
S_{\nu}^{\prime}\right)  _{1\leqslant\nu\leqslant N}$ of mutually disjoint
open conformal discs of diameter at most $\varepsilon\in\mathbb{R}_{+}^{\ast}$
such that $\overline{S^{\prime}}=\underset{1\leqslant\nu\leqslant N}{\cup
}\overline{S_{\nu}^{\prime}}$. Using the mean value lemma, one can find for
each $\nu$ $\zeta_{\nu}\in S_{\nu}^{\prime}$ such that$\vspace*{-6pt}$%
\[
\int_{S_{\nu}^{\prime}}\left(  \Delta\widetilde{\varphi}\right)  \omega
=I_{\nu}m_{\nu}%
\]
where $I_{\nu}=\int_{S_{\nu}^{\prime}}\omega$ and $m_{\nu}=\left(
\Delta\widetilde{\varphi}\right)  \left(  \zeta_{\nu}\right)  $. Set
$P_{\varepsilon}=\left\{  \zeta_{\nu},~1\leqslant\nu\leqslant n\right\}  $ and
$Z_{\varepsilon}=Z\backslash P_{\varepsilon}$. The function $\varphi
_{\varepsilon}$ defined on $Z_{\varepsilon}$ by the formula$\vspace*{-6pt}$%
\[
\varphi_{\varepsilon}\left(  z\right)  =%
%TCIMACRO{\dsum \limits_{\nu}}%
%BeginExpansion
{\displaystyle\sum\limits_{\nu}}
%EndExpansion
I_{\nu}m_{\nu}G\left(  \zeta_{\nu},z\right)  \vspace*{-6pt}%
\]
is real analytic on $Z_{\varepsilon}$, real valued provided $\varphi$ is so
and according to~(\ref{F/ lap}) and to the choice of $\left(  \zeta_{\nu
}\right)  $ satisfies$\vspace*{-6pt}$
\[
\Delta\varphi_{\varepsilon}=%
%TCIMACRO{\dsum \limits_{\nu}}%
%BeginExpansion
{\displaystyle\sum\limits_{\nu}}
%EndExpansion
I_{\nu}m_{\nu}\left(  \delta_{\zeta_{\nu}}-1\right)  =%
%TCIMACRO{\dsum \limits_{\nu}}%
%BeginExpansion
{\displaystyle\sum\limits_{\nu}}
%EndExpansion
I_{\nu}m_{\nu}\delta_{\zeta_{\nu}}-\int_{S_{\nu}}\left(  \Delta
\widetilde{\varphi}\right)  \omega\vspace*{-6pt}%
\]
As$^{(}$\footnote{Let us fix $z$ in $Z$ and a geodesic coordinate $w$
centerered at $z$. Then $\omega=idw\wedge d\overline{w}+O\left(  \left\vert
w\right\vert ^{2}\right)  $. Let $f$ be a function of classe $C^{2}$ near $z$.
As $\ast$ acts as multiplication by $-i$, resp. $i$, on forms of bidegree
$(1,0)$, resp. $\left(  0,1\right)  $, and as $\ast\omega=1$, we get $\ast
df=-i\frac{\partial f}{\partial w}dw+i\frac{\partial f}{\partial\overline{w}%
}d\overline{w}$ and $\delta df=-\ast d\ast df=-2\frac{\partial^{2}f}%
{\partial\overline{w}\partial w}+O(\left\vert w\right\vert ^{2})$. On the
other hand, $dd^{c}f=2\frac{\partial^{2}f}{\partial w\partial\overline{w}%
}idw\wedge d\overline{w}$. Evaluation at $z$ yields $\left(  dd^{c}f\right)
_{z}=-\left(  \Delta f\right)  \left(  z\right)  \omega_{z}$.}$^{)}$ $\left(
\Delta\widetilde{\varphi}\right)  \omega=d^{c}d\widetilde{\varphi}$, Stokes
formula yields$\vspace*{-6pt}$
\[
\int_{S^{\prime}}\left(  \Delta\widetilde{\varphi}\right)  \omega
=-\int_{Z\backslash S^{\prime}}d^{c}d\widetilde{\varphi}=0
\]
and it appears that $\varphi_{\varepsilon}$ is harmonic on $Z_{\varepsilon}$.
The singularity of $\varphi_{\varepsilon}$ at $\zeta_{\nu}$ is the same as
$G_{\zeta_{\nu}}$ at $\zeta_{\nu}$ which is a logarithmic isolated one. In
addition,
\[%
%TCIMACRO{\dsum \limits_{\nu}}%
%BeginExpansion
{\displaystyle\sum\limits_{\nu}}
%EndExpansion
\left\vert \operatorname*{Res}\left(  \partial\varphi_{\varepsilon},\zeta
_{\nu}\right)  \right\vert \leqslant%
%TCIMACRO{\dsum \limits_{\nu}}%
%BeginExpansion
{\displaystyle\sum\limits_{\nu}}
%EndExpansion
\int_{S_{\nu}^{\prime}}\left\vert \Delta\widetilde{\varphi}\right\vert
\omega\leqslant Cte\left\Vert \widetilde{\varphi}\right\Vert _{L_{2}%
^{1}\left(  S^{\prime}\right)  }\leqslant Cte\left\Vert \varphi\right\Vert
_{L_{2}^{1}\left(  Z\backslash S\right)  }%
\]

It remains to estimate how $\varphi_{\varepsilon}$ approaches $\varphi$. As
$\mathcal{H}\circ E=0$, the Hodge identity~\ref{F/ Hodge} gives
$\widetilde{\varphi}=\mathcal{G}\Delta\widetilde{\varphi}$. As
$\widetilde{\varphi}=\varphi$ is harmonic in $Z\backslash S^{\prime}$, the
symmetry of $G$ yields that for any $z$ in $Z\backslash S^{\prime}$,
\[
\varphi\left(  z\right)  -\varphi_{\varepsilon}\left(  z\right)
=\int_{S^{\prime}}\left(  \Delta\widetilde{\varphi}\right)  G_{z}\omega-%
%TCIMACRO{\dsum \limits_{\nu}}%
%BeginExpansion
{\displaystyle\sum\limits_{\nu}}
%EndExpansion
\int_{S_{\nu}^{\prime}}\left(  \Delta\widetilde{\varphi}\right)  G\left(
\zeta_{\nu},z\right)  \omega=%
%TCIMACRO{\dsum \limits_{\nu}}%
%BeginExpansion
{\displaystyle\sum\limits_{\nu}}
%EndExpansion
I_{\nu}\left(  z\right)
\]
where $I_{\nu}\left(  z\right)  =\int_{\zeta\in S_{\nu}^{\prime}}\left(
\Delta\widetilde{\varphi}\right)  \left(  \zeta\right)  \left[  G_{z}\left(
\zeta\right)  -G_{z}\left(  \zeta_{\nu}\right)  \right]  \omega\left(
\zeta\right)  $. As $Z\backslash S$ is a compact subset of $Z\backslash
\overline{S^{\prime}}$ and each $S_{\nu}^{\prime}$ has diameter at most
$\varepsilon$, this implies
\begin{equation}
\left\Vert \varphi-\varphi_{\varepsilon}\right\Vert _{0,Z\backslash
S}\leqslant Cte~\varepsilon\left\Vert G\right\Vert _{1,\left(  Z\backslash
S\right)  \times\overline{S^{\prime}}}%
%TCIMACRO{\dsum \limits_{\nu}}%
%BeginExpansion
{\displaystyle\sum\limits_{\nu}}
%EndExpansion
\int_{S_{\nu}^{\prime}}\left\vert \Delta\widetilde{\varphi}\right\vert
\omega\leqslant Cte~\varepsilon\leqslant C_{0}\left\Vert \varphi\right\Vert
_{L_{2}^{1}\left(  Z\backslash S\right)  } \label{F/ majo}%
\end{equation}
where $C_{0}$ depends only of $\left(  S,S^{\prime}\right)  $ and $\left\Vert
G\right\Vert _{1,\left(  Z\backslash S^{\prime}\right)  \times\overline{S}}$
is the supremum norm on $\left(  Z\backslash S\right)  \times\overline
{S^{\prime}}$ of $G$ and its full differential with respect to its second
variable. As $\varphi$ and $\varphi_{\varepsilon}$ are harmonic in
$Z\backslash S^{\prime}$, (\ref{F/ majo}) implies that for any $m$, there is
$C_{m}\in\mathbb{R}_{+}$ depending only of $\left(  S,S^{\prime}\right)  $
such that
\[
\left\Vert \varphi-\varphi_{\varepsilon}\right\Vert _{m,Z\backslash
S}\leqslant C_{m}\varepsilon\left\Vert \varphi\right\Vert _{L_{2}^{1}\left(
Z\backslash S\right)  }.
\]

\end{proof}

\begin{proof}
[\textbf{Proof of theorem~\ref{T/ compact generique}}]We assume without loss
of generality that $S$ is smooth so that we can consider the orientated
boundary $\gamma$ of $Z\backslash S$~; set $u=(\left.  U_{Z,\ell}%
^{a,c}\right\vert _{\gamma})_{0\leqslant\ell\leqslant2}$ and $\theta
u=(\left.  \left(  \partial U_{Z,\ell}^{a,c}\right)  \right\vert _{\gamma
})_{0\leqslant\ell\leqslant2}$. We apply the theorem~\ref{T AEB1} to find in
$C^{\infty}\left(  Z\backslash S\right)  ^{3}$ a triple $\left(  V_{\ell}%
^{1}\right)  $ of harmonic functions on $Z\backslash S $ such that $(V_{\ell
}^{1})$ is arbitrarily close to $\left(  U_{Z,\ell}^{a,c}\right)  $ in
$C^{\infty}\left(  Z\backslash S\right)  ^{3}$ and the canonical $\Phi$ map
associated to $\left(  V_{\ell}^{1}\right)  $ is an almost embedding from
$Z\backslash S$ to $\mathbb{CP}_{2}$ which means in particular that $\Phi$
embeds $\gamma$ into $\mathbb{CP}_{2}$ and $\Phi^{-1}\left(  \delta\right)
=\gamma$ where $\delta=\Phi\left(  \gamma\right)  $. In fact, since $\left(
U_{Z,\ell}^{a,c}\right)  $ is smooth in a neighborhood of $Z\backslash S$, we
are in theorem~\ref{P/ PerturbPlgmnt} situation whose proof concludes that
$\left(  V_{\ell}^{1}-U_{Z,\ell}^{a,c}\right)  =\left(  \operatorname{Re}%
H_{\ell}\right)  $ where $\left(  H_{\ell}\right)  $ is a triple of
holomorphic functions on a neighborhood of $Z\backslash S$.

Consider now $\varepsilon\in\mathbb{R}_{+}^{\ast}$. We apply
theorem~\ref{T/ Runge} to get for each $\ell\in\left\{  0,1,2\right\}  $ a
real valued function $R_{\ell}$ which is harmonic outside a finite subset
$P_{\ell}$ of $S$, has only logarithmic isolated singularities at points of
$P_{\ell}$, whose restriction to $Z\backslash S$ is arbitrarily closed to
$\operatorname{Re}H_{\ell}$ in $C^{\infty}\left(  Z\backslash S\right)  $ and
such that
\begin{equation}
\left\vert \kappa\right\vert _{1}\leqslant C\left\Vert \operatorname{Re}%
H_{\ell}\right\Vert _{L_{2}^{1}\left(  Z\backslash S\right)  } \label{F/ ctrl}%
\end{equation}
where $C$ is some constant and $\kappa=\left(  \kappa_{\ell}\right)
_{0\leqslant\ell\leqslant2}$ where $\kappa_{\ell}$ denotes the family of
residues of $R_{\ell}$ at points of $P_{\ell}$ $0\leqslant\ell\leqslant2$. By
construction, $\left(  V_{\ell}\right)  =\left(  U_{Z,\ell}^{a,c}+R_{\ell
}\right)  $ differs from $U_{Z}^{a,c,p,\kappa}$ only by an additive constant,
is arbitrarily close to $\left(  V_{\ell}^{1}\right)  $ in $C^{\infty}\left(
Z\backslash S\right)  ^{3}$ and, thanks to (\ref{F/ ctrl}), $\left\vert
\kappa\right\vert _{1}\leqslant\varepsilon$ provided $\left(  H_{\ell}\right)
$ is sufficiently small in $C^{\infty}\left(  Z\backslash S\right)  ^{3}$.

When $\left(  V_{\ell}\right)  $ is sufficiently close to $\left(  V_{\ell
}^{1}\right)  $, $\left(  \partial V_{\ell}\right)  $ induces a canonical map
$\Psi$ from $Z\backslash S$ to $\mathbb{CP}_{2}$ which is an immersion and
embeds $\gamma$ into $\mathbb{CP}_{2}$. Since $\Phi$ is an almost embedding,
we can find smoothly bordered open neighborhoods $\Gamma$ and $\Delta$ of
$\gamma$ and $\delta$ in $X$ and $Y=\Phi\left(  X\right)  $ respectively such
that $\Phi\left\vert _{\overline{\Gamma}}^{\overline{\Delta}}\right.  $ is a
diffeomorphism. Set $\Psi\left(  X\right)  =Y^{\prime}$, $\Psi\left(
\overline{\Gamma}\right)  =\Delta^{\prime}$, $\delta^{\prime}=\Psi\left(
\delta\right)  $, consider any hermitian metric on $\mathbb{CP}_{2}$ and
denote by $h$ the associated Hausdorff distance between subsets of $Z$. If
$\left(  V_{\ell}\right)  $ is close enough to $\left(  V_{\ell}^{1}\right)
$, $\Psi\left\vert _{\overline{\Gamma}}^{\Psi\left(  \overline{\Gamma}\right)
}\right.  $ is also a diffeomorphism and $h\left(  Y\backslash\Delta
,Y^{\prime}\backslash\Delta^{\prime}\right)  +h\left(  \delta,\delta^{\prime
}\right)  $ can be made arbitrarily small and in particular less that
$dist\left(  Y\backslash\Delta,\delta\right)  $ which is a positive number
since $\Phi^{-1}\left(  \delta\right)  =\gamma$. This forbids $\Phi
^{-1}\left(  \delta^{\prime}\right)  \cap\Delta^{\prime}\neq\varnothing$.
Hence, $\Phi^{-1}\left(  \delta^{\prime}\right)  =\gamma^{\prime}$ and we can
apply theorem~\ref{P/ Immersion} to conclude that $\Psi$ is an almost
embedding of $Z\backslash S$. Thus, $\left(  a,c,p,\kappa\right)  \in
E_{Z,n}\left(  S\right)  $ where $n=\underset{0\leqslant\ell\leqslant2}{\max
}\operatorname{Card}p_{\ell}$.
\end{proof}

\renewcommand\baselinestretch{1}{\normalsize
\bibliographystyle{amsperso}
\bibliography{ref}
}%

%TCIMACRO{\TeXButton{Fin Interligne}{\end{spacing}}}%
%BeginExpansion
\end{spacing}%
%EndExpansion

\end{document}